%% filename: amsproc-template.tex
%% version: 1.1
%% date: 2014/07/24
%%
%% American Mathematical Society
%% Technical Support
%% Publications Technical Group
%% 201 Charles Street
%% Providence, RI 02904
%% USA
%% tel: (401) 455-4080
%%      (800) 321-4267 (USA and Canada only)
%% fax: (401) 331-3842
%% email: tech-support@ams.org
%%
%% Copyright 2008-2010, 2014 American Mathematical Society.
%%
%% This work may be distributed and/or modified under the
%% conditions of the LaTeX Project Public License, either version 1.3c
%% of this license or (at your option) any later version.
%% The latest version of this license is in
%%   http://www.latex-project.org/lppl.txt
%% and version 1.3c or later is part of all distributions of LaTeX
%% version 2005/12/01 or later.
%%
%% This work has the LPPL maintenance status `maintained'.
%%
%% The Current Maintainer of this work is the American Mathematical
%% Society.
%%
%% ====================================================================

%     AMS-LaTeX v.2 template for use with amsproc
%
%     Remove any commented or uncommented macros you do not use.

\documentclass{amsproc}

\theoremstyle{definition}

\usepackage{geometry}
\geometry{a4paper,scale=0.7}
\theoremstyle{remark}

\numberwithin{equation}{section}

\usepackage{graphicx}
\usepackage{latexsym}
\usepackage{amsmath}
\usepackage{amssymb}
\usepackage{amsfonts}
\usepackage{verbatim}
\usepackage{mathrsfs}
\usepackage[latin1]{inputenc}
\usepackage{color}
\usepackage{caption} 

\newtheorem{tm}{Theorem}[section]
\newtheorem{rk}{Remark}[section]
\newtheorem{ap}{Assumption}[section]

\newtheorem{prop}{Proposition}[section]
\newtheorem{lm}{Lemma}[section]
\newtheorem{cor}{Corollary}[section]

\newcommand{\E}{\mathbb E}

\newcommand{\N}{\mathbb N}

\newcommand{\bi}{\mathbf i}
\newcommand{\bs}{\mathbf s}

\newcommand{\HH}{\mathbb H}

\newcommand{\<}{\langle}
\renewcommand{\>}{\rangle}
\allowdisplaybreaks[4]

\begin{document}

\title[Regularized splitting methods for SlogS equation]
{Structure-preserving splitting methods for stochastic  logarithmic Schr\"odinger equation  via regularized energy approximation}

%    Remove any unused author tags.
\author{Jianbo Cui}
\address{Department of Applied Mathematics, The Hong Kong Polytechnic University, Hung Hom, Hong Kong}
\curraddr{}
\email{jianbo.cui@polyu.edu.hk}
\thanks{}
%    author one information
\author{Jialin Hong}
\address{1. LSEC, ICMSEC, 
			Academy of Mathematics and Systems Science, Chinese Academy of Sciences, Beijing,  100190, China\qquad
			2. School of Mathematical Science, University of Chinese Academy of Sciences, Beijing, 100049, China}
\curraddr{}
\email{hjl@lsec.cc.ac.cn}
\thanks{
}

\author{Liying Sun}
\address{LSEC, ICMSEC, 
			Academy of Mathematics and Systems Science, Chinese Academy of Sciences, Beijing,  100190, China
		}
\curraddr{}
\email{liyingsun@lsec.cc.ac.cn}
\thanks{This work is supported by National Natural Science Foundation of China (No.
91630312, No. 91530118, No.11021101 and No. 11290142) and by the China Postdoctoral
Science Foundation (No. 2021M690163, No. BX2021345). The research of J. C. is partially supported by start-up funds from Hong Kong Polytechnic University and the CAS AMSS-PolyU Joint Laboratory of Applied Mathematics. 
}
%    author one information

%    author one information

%    author two information
\subjclass[2010]{Primary 60H35; Secondary  35Q55, 35R60, 65M12}

\keywords{stochastic  Schr\"odinger equation,
logarithmic nonlinearity,
energy regularized approximation, 
structure-preserving splitting method,
strong and weak convergence.
}

\date{\today}

\dedicatory{}

\begin{abstract}
In this paper, we study two kinds of structure-preserving splitting methods, including the Lie--Trotter type splitting method and the finite difference type method, for the  stochastic logarithmic Schr\"odinger equation (SlogS equation) via a regularized energy approximation. 
We first introduce a regularized SlogS equation with a small parameter $0<\epsilon\ll1$ which approximates the SlogS equation and avoids the singularity near zero density. 
Then we present a priori estimates, the regularized entropy and energy, and 
the stochastic symplectic structure of the proposed numerical methods. 
Furthermore, we derive both the strong convergence rates and the convergence rates of the regularized entropy and energy. To the best of our knowledge, this is the first result concerning the construction and analysis of numerical methods for stochastic Schr\"odinger equations with logarithmic nonlinearities. 
\end{abstract}

\maketitle

%\tableofcontents

\section{Introduction}

In this paper, we focus on the SlogS equation
\begin{align}
\label{SlogS}
d u(t)&=\mathbf i \Delta u(t)dt+\mathbf i \lambda u(t)\log(|u(t)|^2)dt+ \widetilde g(u(t))\star dW(t), \; t>0,\\\nonumber 
u(0)&=u_0, 
\end{align}
where $\lambda\in \mathbb R/\{0\}$ measures the force of the logarithmic nonlinearity, $\Delta$ is the Laplacian operator on $\mathcal O\subset \mathbb R^d$ with $\mathcal O$ being either $\mathbb R^d$ or a bounded domain with homogeneous Dirichlet or periodic boundary condition, and $d\in \mathbb N^+$ is the spatial dimension. Here $W(t)$ is a $Q$-Wiener process, i.e., $W(t)=\sum_{k\in \mathbb N^+}Q^{\frac 12}e_i\beta_k(t)$ with $\{\beta_k\}_{k\in\mathbb N^+}$ being a sequence of independent Brownian motions on a probability space $(\Omega,$ 
$\mathcal F, (\mathcal F_t)_{t\ge 0},\mathbb P).$ 
The operator $Q^{\frac 12}$ {is} bounded on $\mathbb H:=L^2(\mathcal O;\mathbb C)$ satisfying $\sum_{i\in \mathbb N^+}\|Q^{\frac 12}e_{i}\|^2<\infty$, and $\{e_i\}_{i\in\mathbb N^+}$ is an orthonormal basis of $\mathbb H$. 
Here $\widetilde g$ is a continuous function and 
\begin{align*}
\widetilde g(u)\star dW(t) &:=-\frac 12\sum_{i\in \mathbb N^+}|Q^{\frac 12}e_i|^2\Big(|g(|u|^2)|^2u\Big)dt\\\nonumber 
&\quad-\bi \sum_{i\in \mathbb N^+}g(|u|^2)g'(|u|^2) |u|^2 u
Im(Q^{\frac 12}e_i)Q^{\frac 12}e_i dt+ \mathbf i g(|u|^2)u dW(t)
\end{align*}
if $\widetilde g(x)=\bi g(|x|^2)x$ (multiplicative case), and  
$$\widetilde g(u)\star dW(t):= dW(t)$$ if $\widetilde g=1$ (additive case). 
The SlogS equation has wide applications in quantum mechanics, quantum optics, nuclear physics, transport and diffusion phenomena, open quantum system, Bose-Einstein condensations, etc. (see e.g. \cite{AZ03,BM76,Hef85,MFGL03,Yas78,Zlo10}). 

The logarithmic nonlinearity possesses several features which make the logarithmic Schr\"odinger equation unique among nonlinear wave equations.
For instance, the logarithmic nonlinearity is not locally Lipschitz and causes singularity near vacuum. 
The large time behavior, like the dispersive phenomenon depending on the sign of $\lambda$, is  totally different from that in the Schr\"odinger equation with smooth nonlinearity (see e.g. \cite{CG18,Caz83}). Moreover, the randomness of the driving noise destroys many well-known conservation laws and structures. 
Several physical quantities, such as the mass, momentum and energy, may be no longer conservative for the SlogS equation (see e.g. \cite{BRZ17,C2020}). 
Only when $\widetilde g(x)=\bi g(|x|^2)x$ with a continuous real-valued  function $g$ and $W(t)$ is $L^2(\mathcal O;\mathbb R)$-valued,  the mass conservation law holds. 
The well-posedness of the SlogS equation driven by the linear multiplicative noise ($\widetilde g(x)=\bi x$) has been shown in \cite{BRZ17}.
{Recently, the author in \cite{C2020} obtains the well-posedness of the SlogS equation with general diffusion coefficients (including $\widetilde g(x)=\bi g(|x|^2)x$ and $\widetilde g(x)=1$).}  

\iffalse
Different numerical methods have been proposed and studied for the stochastic nonlinear Schr\"odinger (SNLS) equation with smooth nonlinearity in the literature {\color{red} (see e.g. \cite{AC18,BD06,CH17,CHL16b,CHLZ17,CHP16,HW19,Liu13} and reference therein). For instance, }
the authors in \cite{BD06} propose temporal Crank--Nicolson type scheme and study the weak and strong convergence rates for the SNLS equation with Lipschitz and regular coefficients. 
{\color{red} The authors in \cite{CH16} study stochastic  symplectic Runge--Kutta methods to inherit the stochastic symplectic structure and present the strong convergence analysis.} 
%The authors in  obtain the strong convergence and the trace formula of an explicit exponential integrator. 
For the stochastic NLS equation with non-monotone coefficients, some
papers have constructed strong numerical approximations and obtained convergence rates in a certain sense weaker than strong
sense, such as pathwise convergence or convergence in probability (see e.g. \cite{CHP16,BD06,HW19,Liu13} and references therein). 
Progress has been made in \cite{CH17,CHL16b,CHLZ17},
where the authors obtain strong and weak convergence rates of several numerical methods, including the spatial centered difference method, the spatial Galerkin method and temporal splitting methods, for a
conservative stochastic NLS equation. 
{\color{blue} this paragraph need to be deleted!}
\fi

Despite various and fruitful numerical results of stochastic Schr\"odinger equations with smooth nonlinearities {(see e.g. \cite{AC18,BC2020a,CH16,CH17,CHL16b,CHLZ17,BD06,HW19} and references therein)},  the numerical analysis of stochastic Schr\"odinger equations  
with non-locally Lipschitz nonlinearities, especially for the SlogS equations, is far from being well understood and confronts several challenges.  One is that the direct numerical discretization often produces the numerical vacuum which are difficult to deal with when computing the logarithmic nonlinearity.   
Another challenge lies on the mutual effect of the random noise and the non-locally Lipschitz coefficient, which leads that the existing numerical approach for analyzing the geometric structures and convergence analysis of numerical methods are not available for SlogS equations.  To overcome these issues, we will  introduce a regularized problem of \eqref{SlogS} which is used to show the well-posedness of the SlogS equation in \cite{C2020}.
Then we show that the regularized energy of the regularized SlogS equation is well-defined, and thus the regularized SlogS equation is a stochastic Hamiltonian partial differential equation whose phase flow preserves the stochastic symplectic structure. 
The a priori estimates and convergence results of the regularized SlogS equation to \eqref{SlogS} are also presented.

Furthermore, we propose structure-preserving splitting methods of different types, including the Lie--Trotter splitting methods and finite difference methods, based on the regularized SlogS equation, to inherit the intrinsic properties of original systems.
%We would like to remark that the regularized scheme has been already proposed and used for logarithmic Schr\"odinger equation in the deterministic case (see e.g., \cite{BCST19} and references therein). 
We study several important features of the proposed numerical methods, including the moment estimates of regularized entropy and energy, the mass evolution law and the symplectic structure. 
Based on the structure-preserving properties of the regularized SlogS equation and numerical methods, error estimates in both strong and weak convergence senses are established between the solutions of \eqref{SlogS} and the regularized splitting methods.  To the best of our knowledge, this is the first result on the construction of numerical methods   and their numerical analysis, including structure-preserving properties, strong and weak  convergence, for the SlogS equation. 
We would like to mention that  the proposed numerical methods are even new in the deterministic case, i.e., $\widetilde g=0$, and 
all the numerical results in this paper still hold in the deterministic case.

%has complemented the temporal error estimates (see Theorem \ref{tm-con-det}) of the Crank--Nicolson type scheme emerged from \cite[Remark 5]{BCST19}. The key of our approach lies on expanding the Crank--Nicolson type scheme along the mid-point scheme, and making use of the energy-preserving property of the  Crank--Nicolson scheme.

The reminder of this article is organized as follows. In section 2, we introduce some basic notations and present  the well-posedness result of the SlogS equation and its regularized version. 
Section 3 is devoted to constructing and analyzing the Lie--Trotter type splitting method, including $\epsilon$-independent a priori estimate, mass evolution law and symplectic structure. 
In section 4, we propose the finite difference type splitting method, and prove its convergence of the energy functional and strong convergence. Throughout this article,  $C$ denotes
various positive constants which may change from line to line.

\section{Regularized SlogS equation}

In this section, we introduce some necessary notations, the well-posedness result of  SLogS equation \eqref{SlogS}, as well as the properties of the regularized SLogS equation. 

\subsection{Preliminary}
Denote $\mathbb H$ with the product $\<u,v\>:= Re [\int_{\mathcal O} u\bar vdx],$ $u,v\in \mathbb H.$ Let $W^{k,p}, k\in \mathbb N,p\in \mathbb N^+$ be the classic Sobolev spaces and $\mathbb H^k=W^{k,2}.$  Denote $L^p:=L^p(\mathcal O;\mathbb C),$ $p\ge 1.$ In order to bound the entropy $F(\rho):=\int_{\mathcal O}(\rho\log \rho -\rho)dx, \rho=|u|^2$ in the case of $\mathcal O=\mathbb R^d$, we introduce the weighted square integrable space 
$$L^2_{\alpha}:=\{v\in {\mathbb H} | \; x \mapsto (1+|x|^2)^{\frac \alpha 2}v(x)\in {\mathbb H}\}$$
with the norm $\|v\|_{L_\alpha^2(\mathbb R^d)}:=\|(1+|\cdot|^2)^{\frac \alpha 2}v(\cdot)\|_{L^2(\mathbb R^d)}, \alpha\ge 0.$ 
For convenience, we always assume that $u_0\in \mathbb H^1\cap L^2_{\alpha}, $ $\alpha\in(0,1],$ is $\mathcal F_0$-measurable and has any finite $p$-moment, $p\in \mathbb N^+$. The main assumption on $W$ and $\widetilde g$ is stated as follows.
\begin{ap}\label{main-as}
	The Wiener process $W$ and $\widetilde g$  satisfy one of the following conditions:
	\begin{enumerate}
		\item $\{W(t)\}_{t\ge 0}$ is $\mathbb H$-valued, 
		$\widetilde g=1$ or $\widetilde g(x)=\mathbf ig(|x|^2)x$ {with} $g \in \mathcal C^2_b(\mathbb R)$ satisfying the growth condition,
		\begin{align*}
		\sup_{x\in [0,\infty)}|g(x)|+\sup_{x\in [0,\infty)}|g'(x)x|+\sup_{x\in [0,\infty)}|g''(x)x^2|\le C_g,
		\end{align*} 
		the one-side Lipschitz continuity condition, i.e.,  for any $x,y\in \mathbb C,$
		\begin{align}\label{con-g1}
		|(\bar y-\bar x)(g'(|x|^2)g(|x|^2)|x|^2x-g'(|y|^2)g(|y|^2)|y|^2y)| \le C_g|x-y|^2,
		\end{align}
		and 
		\begin{align}\label{con-g}
		(x+y)(g(|x|^2)-g(|y|^2))\le C_g|x-y|, \quad x,y\in [0,\infty).
		\end{align}
		\item $ {\{W(t)\}_{t\ge 0}}$ is $L^2(\mathcal O;\mathbb R)$-valued and $\widetilde g(x)=\mathbf ig(|x|^2)x$ with $g \in \mathcal C^1_b(\mathbb R)$ satisfying \eqref{con-g} and the growth condition 
		\begin{align*}
		\sup_{x\in [0,\infty)}|g(x)|+\sup_{x\in [0,\infty)}|g'(x)x|\le C_g.
		\end{align*} 
	\end{enumerate}
\end{ap}

The functions like $a, \frac a{b+x},\frac {ax}{b+cx},  \frac {ax}{b+cx^2}$ with  $b,c>0,$ will satisfy the above conditions on $g$ in Assumption \ref{main-as}. %The well-posedness result of  Eq. \eqref{SlogS}  is formulated as follows ().

\begin{tm}[see~\cite{C2020}]
	\label{mild-general}
	Let $T>0$ and Assumptions \ref{main-as} hold,  $u_0\in \mathbb H^1\cap L^2_{\alpha},$ $\alpha\in(0,1],$ be $\mathcal F_0$-measurable and have any finite $p$th moment with $p\geq1$.   Assume that $\sum_{i\in\mathbb N^+} \|Q^{\frac 12}e_i\|_{L^2_{\alpha}}^2+\|Q^{\frac 12}e_i\|_{\mathbb H^1}^2 <\infty$ when $\widetilde g=1$ and that  $\sum_{i\in\mathbb N^+} \|Q^{\frac 12}e_i\|_{\mathbb H^1}^2+\|Q^{\frac 12}e_i\|_{W^{1,\infty}}^2<\infty$ when $\widetilde g(x)= \mathbf  ig(|x|^2)x$.
	Then there exists a unique mild solution $u$ in $C([0,T];\mathbb H)$ for Eq. \eqref{SlogS}.
	Moreover, there exists $C(Q,T,\lambda,p,u_0,\alpha,\widetilde g)>0$ such that 
	\begin{align*}
	\E\Big[\sup_{t\in [0,T]}\|u(t)\|_{\mathbb H^1}^p\Big]+\E\Big[\sup_{t\in [0,T]}\|u(t)\|_{L^2_{\alpha}}^p\Big]\le C(Q,T,\lambda, p,u_0,\alpha,\widetilde g).
	\end{align*}
\end{tm}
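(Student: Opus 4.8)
The plan is to follow the regularization strategy of \cite{C2020}: replace the singular term $u\log(|u|^2)$ by a well-behaved approximation, solve the resulting equation by standard stochastic $L^2$-theory, derive a priori bounds that are \emph{uniform} in the regularization parameter, and then pass to the limit. Concretely, for $\epsilon>0$ set $f_\epsilon(u):=u\log\!\big(|u|^2/(1+\epsilon|u|^2)\big)$ (any smoothing that is locally Lipschitz on $\mathbb H$, of at most logarithmic growth, and for which the pointwise imaginary-part bound used below holds uniformly in $\epsilon$ will do). For fixed $\epsilon$ the equation $du_\epsilon=\mathbf i\Delta u_\epsilon\,dt+\mathbf i\lambda f_\epsilon(u_\epsilon)\,dt+\widetilde g(u_\epsilon)\star dW$ admits a unique local mild solution in $C([0,\tau];\mathbb H)$ by a Banach fixed-point argument based on the unitary group $e^{\mathbf i t\Delta}$ on $\mathbb H$, the Lipschitz bound on $f_\epsilon$, and the structural and growth hypotheses on $\widetilde g$ in Assumption~\ref{main-as}; in the multiplicative case with $L^2(\mathcal O;\mathbb R)$-valued $W$, It\^o's formula gives the mass identity $\|u_\epsilon(t)\|=\|u_0\|$, and in general $\E\|u_\epsilon(t)\|^{2p}\le C$, so blow-up is excluded and the solution is global.

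The heart of the proof is the $\epsilon$-uniform a priori estimate, obtained by applying It\^o's formula to the regularized energy $E_\epsilon(u)=\tfrac12\|\nabla u\|^2-\lambda\int_{\mathcal O}F_\epsilon(|u|^2)\,dx$ (with $F_\epsilon'=\log(\cdot/(1+\epsilon\,\cdot))$), to $\|u_\epsilon\|_{L^2_\alpha}^2$, and to $\|u_\epsilon\|^{2p}$: the martingale terms are controlled by Burkholder--Davis--Gundy, and the It\^o correction terms by the growth bounds and the conditions \eqref{con-g1}--\eqref{con-g} on $g$, together with the assumed summability of $\|Q^{\frac12}e_i\|_{\mathbb H^1}$ and $\|Q^{\frac12}e_i\|_{W^{1,\infty}}$ (resp.\ of $\|Q^{\frac12}e_i\|_{L^2_\alpha}$ and $\|Q^{\frac12}e_i\|_{\mathbb H^1}$ in the additive case). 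The delicate point is that the logarithmic potential has no definite sign, so one splits $\int_{\mathcal O}|u|^2\bigl|\log|u|^2\bigr|\,dx$ over the regions $\{|u|>1\}$ and $\{|u|\le1\}$: on $\{|u|>1\}$ the integrand is $\lesssim_\delta|u|^{2+\delta}$ and is absorbed into $\|u\|_{\mathbb H^1}$ by Gagliardo--Nirenberg (or by a logarithmic Sobolev inequality); on $\{|u|\le1\}$ one uses $\rho|\log\rho|\le C_\delta\rho^{1-\delta}$ and, when $\mathcal O=\mathbb R^d$, a H\"older estimate against the weight $(1+|x|^2)^{-\alpha(1-\delta)}$---integrable for $\delta$ small precisely because $\alpha>0$---to bound it by $\|u\|_{L^2_\alpha}$. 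Hence $E_\epsilon$, $\|\nabla u_\epsilon\|^2$ and $\|u_\epsilon\|_{L^2_\alpha}^2$ must be estimated \emph{simultaneously}, and a Gronwall argument on the resulting coupled system of inequalities yields $\sup_\epsilon\Big(\E\big[\sup_{t\le T}\|u_\epsilon(t)\|_{\mathbb H^1}^p\big]+\E\big[\sup_{t\le T}\|u_\epsilon(t)\|_{L^2_\alpha}^p\big]\Big)<\infty$.

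Finally I would pass to the limit $\epsilon\to0$ and settle uniqueness by means of the classical pointwise inequality $\big|Im[(z_1\log|z_1|^2-z_2\log|z_2|^2)(\bar z_1-\bar z_2)]\big|\le C|z_1-z_2|^2$ for $z_1,z_2\in\mathbb C$, which persists for $f_\epsilon$ uniformly in $\epsilon$. Writing the mild form for $u_\epsilon-u_{\epsilon'}$ and combining this bound with the consistency error $f_\epsilon-f_{\epsilon'}$ (vanishing as $\epsilon,\epsilon'\to0$ by the a priori bounds), the Lipschitz-type control of $\widetilde g$ from \eqref{con-g1}--\eqref{con-g}, and Burkholder--Davis--Gundy plus Gronwall, shows that $(u_\epsilon)$ is Cauchy in $L^p(\Omega;C([0,T];\mathbb H))$; the limit $u$ solves \eqref{SlogS} in the mild sense and, by Fatou's lemma, obeys the asserted $\mathbb H^1\cap L^2_\alpha$ bound, while the same pointwise inequality applied to two mild solutions gives pathwise uniqueness and hence uniqueness in $C([0,T];\mathbb H)$. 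I expect the coupled $\mathbb H^1$/$L^2_\alpha$ a priori estimate to be the main obstacle: over $\mathcal O=\mathbb R^d$ the negative part of the entropy density $\rho\log\rho$ is integrable only thanks to the polynomial weight, so one has to show that $E_\epsilon$ stays comparable---uniformly in $\epsilon$---to $\tfrac12\|\nabla u\|^2$ modulo quantities controlled by $\|u\|_{L^2_\alpha}$.
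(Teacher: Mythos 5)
Your plan is essentially the route the paper relies on: Theorem \ref{mild-general} is imported from \cite{C2020}, whose proof (sketched for the companion Lemma \ref{lm-con} in the Appendix) proceeds exactly as you propose — regularize the logarithm, obtain $\epsilon$-uniform bounds in $\mathbb H^1\cap L^2_\alpha$ (with the weighted norm absorbing the negative part of the entropy on $\{|u|\le 1\}$, Gagliardo--Nirenberg handling $\{|u|>1\}$), show the regularized solutions form a Cauchy family via It\^o's formula on $\|u^{\epsilon_m}-u^{\epsilon_n}\|^2$ and the pointwise inequality $|Im[(z_1\log|z_1|^2-z_2\log|z_2|^2)(\bar z_1-\bar z_2)]|\le C|z_1-z_2|^2$, and pass to the limit, with uniqueness from the same inequality.

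One concrete slip: the specific regularizer you display, $f_\epsilon(u)=u\log\bigl(|u|^2/(1+\epsilon|u|^2)\bigr)$, only truncates at infinity and leaves the logarithmic singularity at $u=0$ intact, so it is neither bounded by $C(1+|\log\epsilon|)$ nor locally Lipschitz as a Nemytskii map on $\mathbb H$ — precisely the properties your fixed-point step at fixed $\epsilon$ requires (the paper's Assumption \ref{main-reg-fun}(A1) and the Lipschitz bound \eqref{con-f}). The choice actually used, $f_\epsilon(|u|^2)=\log\bigl(\tfrac{\epsilon+|u|^2}{1+\epsilon|u|^2}\bigr)$, regularizes at both ends; the paper explicitly remarks that $\log(\epsilon+x)$-type choices fail (A1). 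Since your hedging clause ("any smoothing that is locally Lipschitz \dots") already states the needed properties, this is a repairable mis-specification rather than a structural flaw, but as written the displayed $f_\epsilon$ would break the local well-posedness step.
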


\subsection{Regularized  energy of regularized SlogS equation}
To deal with the logarithmic nonlinearity, we introduce the regularized  SLogS equation with $0<\epsilon \ll 1,$  
\begin{align}\label{Reg-SlogS}
du^{\epsilon}=\mathbf i \Delta u^{\epsilon}dt+\mathbf i \lambda u^{\epsilon} f_{\epsilon}(|u^{\epsilon}|^2)dt+ \widetilde g(u^{\epsilon})\star dW(t), \;u^{\epsilon}(0)=u_0.
\end{align}	
The regularized energy is defined by $H_{\epsilon}(u):= \frac 12\|\nabla u\|^2-\frac {\lambda}2 F_{\epsilon}(|u|^2)$, where  $F_{\epsilon}(\rho)=\int_{\mathcal O}\int_0^{\rho}f_{\epsilon}(s)ds dx$ is the regularized entropy and $f_{\epsilon}(\cdot)$ is a suitable approximation of $\log(\cdot).$   Formally speaking, we expect that $H_{\epsilon}(u)$ approximates the  energy of Eq. \eqref{SlogS}  $H(u):=\frac 12\|\nabla u\|^2-\frac \lambda{2} F(|u|^2).$
To this end, we impose the following assumption on $f_{\epsilon}$.

\begin{ap}\label{main-reg-fun}
	The regularization function $f_{\epsilon}$ satisfies
	\begin{enumerate}
		\item[(A1)]\label{A1} $|f_{\epsilon}(|x|^2)|\le  C(1+|\log(\epsilon)|)$ \; {$\forall~x\in \mathbb C.$}
		\item[(A2)]\label{A2} $|Im[(f_{\epsilon}(|x_1|^2)x_1-f_{\epsilon}(|x_2|^2)x_2)(\bar x_1-\bar x_2)]|\le C|x_1-x_2|^2$ \; {$\forall~x_1,x_2 \in \mathbb C.$}
		\item[(A3)]\label{A3} $\lim\limits_{\epsilon\to0} f_{\epsilon}(|x|^2)=\log(|x|^2)$ \; {$\forall~x\in \mathbb C.$}
		\item[(A4)]\label{A4} $|f_{\epsilon}(|x|^2)-\log(|x|^2)|\le C(\epsilon+(\epsilon|x|^2)^{\delta})$, $\delta\in (0, 1]$ when $|x|\ge 1,$ 
		and $|f_{\epsilon}(|x|^2)-\log(|x|^2)|\le C(\frac {\epsilon}{\epsilon+|x|^2}+\epsilon)$  when $|x|\le 1.$
		\item[(A5)]\label{A5} $|\frac {\partial f_{\epsilon}(|x|^2)}{\partial |x|}|\le C\frac {|x|}{\epsilon+|x|^2}$\; {$\forall~x\in \mathbb C.$} 
	\end{enumerate}
\end{ap}
One typical example satisfying  Assumption \ref{main-reg-fun} is  $f_{\epsilon}(|x|^2)=\log(\frac {\epsilon+|x|^2}{1+\epsilon|x|^2}),$ whose corresponding regularized  energy is
$ H_{\epsilon}(u^{\epsilon})$ with $ F_{\epsilon}(|u^{\epsilon}|^2)=\int_{\mathcal O}\Big(|u^{\epsilon}|^2\log(\frac {|u^{\epsilon}|^2+\epsilon}{1+|u^{\epsilon}|^2\epsilon})+\epsilon \log(|u^{\epsilon}|^2+\epsilon)-\frac1{\epsilon}\log(\epsilon |u^{\epsilon}|^2+1)-\epsilon\log(\epsilon)\Big) dx.$ 
Assumption \ref{main-reg-fun} is crucial to obtain the strong convergence result and  the H\"older regularity estimate for  Eq. \eqref{Reg-SlogS} (see e.g. \cite{C2020}), which is illustrated in the following lemma. We  would like to remark that $f_{\epsilon}(x)=\log(\epsilon+x)$ or $\log(\epsilon+\sqrt{x})^2$  fails to satisfy $(A 1)$. 

\begin{lm}\label{lm-con}
	Let the condition of Theorem \ref{mild-general} and Assumption \ref{main-reg-fun} hold.   Let  $u^0:=u$ be the mild solution of Eq. \eqref{SlogS}. 
	Then there exists a unique mild solution $u^{\epsilon}$ of Eq. \eqref{Reg-SlogS}. 
	For $p\ge1$, there exist  $C'(Q,T,\lambda,p,u_0,\widetilde g)>0$ and $C'(Q,T,\lambda,p,u_0,\alpha,\widetilde g)>0$ such that for $\epsilon\in [0,1],$
	\begin{align*}
	\E\Big[\|u^{\epsilon}(t)-u^{\epsilon}(s)\|^p\Big] &\le C'(Q,T,\lambda,p,u_0,\widetilde g) |t-s|^{\frac p2}, \\
	\E\Big[\sup_{t\in [0,T]}\|u^{\epsilon}(t)\|_{\mathbb H^1}^p\Big]+ \E\Big[\sup_{t\in [0,T]}\|u^{\epsilon}(t)\|_{L_\alpha^2}^p\Big] &\le C'(Q,T,\lambda,p,u_0,\alpha,\widetilde g).
	\end{align*}
	Moreover, for $p\ge1$ and $\delta\in \big(0, \max(\frac 2{\max(d-2,0)},1)\big),$   there exist $C(Q,T,\lambda,p,u_0,\widetilde g)>0$ and   $C(Q,T,\lambda,p,u_0,\alpha,\delta ,\widetilde g)>0$  such that when $\mathcal O$ is a bounded domain,
	\begin{align*}
	\E\Big[\sup_{t\in [0,T]}\|u^0(t)-u^{\epsilon}(t)\|^p\Big] \le C(Q,T,\lambda,p,u_0,\delta, \widetilde g)(\epsilon^{\frac p2}+\epsilon^{\frac {{\delta}p} 2}),
	\end{align*}
	and when $\mathcal O=\mathbb R^d,$
	\begin{align*}
	\E\Big[\sup_{t\in [0,T]}\|u^0(t)-u^{\epsilon}(t)\|^{p}\Big]
	&\le C(Q,T,\lambda,p,u_0,\alpha,\delta ,\widetilde g)(\epsilon^{\frac {\alpha p}{2\alpha+d}}+\epsilon^{\frac {{\delta}p} 2}).
	\end{align*}	
\end{lm}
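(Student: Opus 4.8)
The plan is to establish the three assertions in sequence: well-posedness of \eqref{Reg-SlogS} with $\epsilon$-uniform a priori bounds, the temporal H\"older estimate, and the regularization-error estimate. Throughout, the case $\epsilon=0$ is covered by Theorem~\ref{mild-general}, so the real content is $\epsilon$-uniformity for $\epsilon\in(0,1]$.

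For well-posedness and the uniform bounds, observe that for fixed $\epsilon>0$ conditions (A1) and (A5) make $x\mapsto xf_\epsilon(|x|^2)$ globally Lipschitz on $\mathbb C$ and $g\in\mathcal C^2_b$ makes $\widetilde g$ Lipschitz, so a standard contraction argument in $C([0,T];\mathbb H)$ produces a unique mild solution, globalised exactly as in \cite{C2020}. For the uniform $\mathbb H^1$ and $L^2_\alpha$ estimates I would apply It\^o's formula — first to finite-dimensional approximations, then passing to the limit — to the regularized energy $H_\epsilon(u^\epsilon)=\tfrac12\|\nabla u^\epsilon\|^2-\tfrac\lambda2 F_\epsilon(|u^\epsilon|^2)$ and to $\|u^\epsilon\|_{L^2_\alpha}^2$, and close the coupled inequality by Gronwall. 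Since the deterministic part of \eqref{Reg-SlogS} is Hamiltonian with Hamiltonian $H_\epsilon$, it contributes no drift to $dH_\epsilon(u^\epsilon)$; the noise and its It\^o correction contribute terms bounded, using $g\in\mathcal C^2_b$ and the summability of $\{Q^{1/2}e_i\}$, by $C(1+|H_\epsilon(u^\epsilon)|+\|u^\epsilon\|^2)$. For $d\|u^\epsilon\|_{L^2_\alpha}^2$ the logarithmic nonlinearity contributes nothing, because $\mathrm{Re}[\mathbf i\lambda\int(1+|x|^2)^\alpha|u^\epsilon|^2f_\epsilon(|u^\epsilon|^2)\,dx]=0$, the Laplacian term is $\le C\|u^\epsilon\|_{L^2_\alpha}\|\nabla u^\epsilon\|$ (since $|\nabla(1+|x|^2)^\alpha|\le C(1+|x|^2)^{\alpha/2}$ for $\alpha\le1$), and the noise is handled as before. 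The one $\epsilon$-delicate point is passing from $H_\epsilon(u^\epsilon)$ back to $\|\nabla u^\epsilon\|^2$: one bounds $|F_\epsilon(|u^\epsilon|^2)|$ uniformly in $\epsilon$ by using (A1) near zero density and (A4) (with $\epsilon\le1$) for large density, exploiting $|\rho\log\rho-\rho|\le C(\rho^{1-\kappa}+\rho^{1+\kappa})$ and absorbing the resulting subcritical $L^q$-norms by Gagliardo--Nirenberg on a bounded domain, or by the weight on $\mathbb R^d$.

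For the temporal H\"older estimate: from the mild formulation and the semigroup property, $u^\epsilon(t)-u^\epsilon(s)=(S(t-s)-I)u^\epsilon(s)+\int_s^t S(t-r)\mathbf i\lambda u^\epsilon f_\epsilon(|u^\epsilon|^2)\,dr+\int_s^t S(t-r)\widetilde g(u^\epsilon)\star dW(r)$ with $S(t)=e^{\mathbf i t\Delta}$. The first term is $\le C|t-s|^{1/2}\|u^\epsilon(s)\|_{\mathbb H^1}$ because $\|(S(\tau)-I)v\|\le C\tau^{1/2}\|\nabla v\|$; the second is $\le C|t-s|$ once $\|u^\epsilon f_\epsilon(|u^\epsilon|^2)\|$ is bounded uniformly in $\epsilon$ (again via $|\rho\log\rho-\rho|\lesssim\rho^{1-\kappa}+\rho^{1+\kappa}$, (A4), and Sobolev or weighted embeddings fed by the previous step); the stochastic term is estimated by the Burkholder--Davis--Gundy inequality, using $\sum_i\|Q^{1/2}e_i\|_{W^{1,\infty}}^2<\infty$ and $g\in\mathcal C^2_b$ in the multiplicative case and trivially in the additive case, giving $\le C|t-s|^{1/2}$. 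Taking $p$th moments and using the uniform bounds yields the claim.

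For the regularization error, which is the core of the lemma, set $v^\epsilon:=u^0-u^\epsilon$ (so $v^\epsilon(0)=0$) and apply It\^o's formula to $\|v^\epsilon\|^p$. The Laplacian drift vanishes, $\mathrm{Re}[\mathbf i\langle\Delta v^\epsilon,v^\epsilon\rangle]=0$. Writing $u^0\log|u^0|^2-u^\epsilon f_\epsilon(|u^\epsilon|^2)=\big(u^0f_\epsilon(|u^0|^2)-u^\epsilon f_\epsilon(|u^\epsilon|^2)\big)+u^0\big(\log|u^0|^2-f_\epsilon(|u^0|^2)\big)$, condition (A2) dominates the drift of the first bracket by $C\|v^\epsilon\|^2$, while the second bracket is treated as an inhomogeneity estimated in $\mathbb H$ via (A4): on $\{|u^0|\ge1\}$ it is $\le C\epsilon^{\delta}\|u^0\|_{L^{q(\delta)}}^{1+2\delta}$, controlled by $\|u^0\|_{\mathbb H^1}$ precisely in the stated subcritical range of $\delta$; on $\{|u^0|\le1\}$ one has $|u^0(\log|u^0|^2-f_\epsilon(|u^0|^2))|\le C\sqrt\epsilon$, which on a bounded domain gives $C\sqrt\epsilon\,|\mathcal O|^{1/2}$, whereas on $\mathbb R^d$ one uses $|u^0(\log|u^0|^2-f_\epsilon(|u^0|^2))|^2\le C\min(|u^0|^2,\epsilon)\le C\epsilon^{s}|u^0|^{2(1-s)}$ together with H\"older's inequality against $(1+|x|^2)^{-\alpha(1-s)/s}$, which is integrable exactly when $s<\tfrac{2\alpha}{2\alpha+d}$, producing the exponent $\tfrac{\alpha p}{2\alpha+d}$ (with an arbitrarily small loss removed by a finer localisation). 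The noise difference is dominated by $C\|v^\epsilon\|^p$ plus a Burkholder--Davis--Gundy-absorbable martingale thanks to the one-sided Lipschitz hypotheses \eqref{con-g1}, \eqref{con-g} and $g\in\mathcal C^2_b$, and it vanishes identically in the additive case. Gronwall's inequality, combined with the $\epsilon$-uniform moments from the first step and from Theorem~\ref{mild-general}, then yields both rates. The hard part, as should be clear, is the $\mathbb R^d$ estimate: the logarithmic singularity at zero density together with the lack of compactness is exactly what forces the weighted interpolation above, and one must carefully track every constant — in particular in the energy-to-$\mathbb H^1$ conversion — to guarantee $\epsilon$-independence.
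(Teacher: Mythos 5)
Your proposal is correct and follows essentially the same route as the paper's (sketched) proof: $\epsilon$-uniform a priori bounds in $\mathbb H^1\cap L^2_\alpha$ from the energy and weighted-mass evolution plus Gronwall, temporal H\"older continuity from the mild formulation, and an It\^o--Gronwall estimate on the $L^2$-difference in which (A2) absorbs the matched nonlinearity and (A4) controls the regularization defect, with the bounded-domain rate $\epsilon^{1/2}+\epsilon^{\delta/2}$ and the $\mathbb R^d$ exponent $\tfrac{\alpha}{2\alpha+d}$ produced by exactly the weighted ($L^2_\alpha$) splitting/optimization you describe (the paper delegates this and the H\"older bound to \cite{C2020}, writing the defect as $\epsilon^{1/2}\|v\|_{L^1}$ rather than via your Cauchy--Schwarz on the defect, which is an equivalent bookkeeping). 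One small correction: the $\epsilon$-uniform control of $F_{\epsilon}$ (and of $u^{\epsilon}f_{\epsilon}(|u^{\epsilon}|^2)$) near vacuum must come from the small-$|x|$ clause of (A4) (as in the proof of Lemma \ref{lm-mod-en}), not from (A1), which carries a $|\log\epsilon|$ factor; since you already invoke (A4), the fix is immediate.
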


The strong convergence of $u^{\epsilon}$ in \eqref{Reg-SlogS} provides a systematic way to study the numerical schemes of the SLogS equation in sections 3 and 4. We present a sketch of the proof of Lemma \ref{lm-con} in Appendix.  
The evolutions of the mass  $M(u):=\|u\|^2$ and the weighted mass $M_{\alpha}(u):=\|u\|_{L^2_{\alpha}}^2$ of \eqref{Reg-SlogS} are presented in Proposition  \ref{prop-evo} in Appendix.
Moreover, when $\widetilde g=1$ or $\widetilde g(x)=\bi x$, it can be verified that there is no error between the masses $M(u^0)$ and $M(u^\epsilon).$ 
Below, we show that the regularized energy of \eqref{Reg-SlogS} is well-defined.

\iffalse
\sout{One can follow the approach in the current work to obtain  logarithmic strong convergence order of numerical methods for the case of  super-linear growthing diffusion like $g(|x|^2)=\log(c+|x|^2), c>0$ (see \cite{C2020}). 
	We also present several evolution laws, including the mass $M(u):=\|u\|^2$ and weighed mass $M_{\alpha}(u):=\|u\|_{L^2_{\alpha}}^2$ of $u^{\epsilon}$ and $u$ (see Proposition \ref{prop-evo} in Appendix).}
\fi

\begin{lm}\label{lm-mod-en}
	Let Assumption \ref{main-reg-fun} and the condition of Theorem \ref{mild-general} hold. Let $u^{0}$ and $u^\epsilon$ be the mild solutions of  Eq. \eqref{SlogS} and Eq. \eqref{Reg-SlogS}, respectively. 
	The regularized energy is well-defined and satisfies that for $p\ge1,$ 
	\begin{align*}
	\E\Big[\sup\limits_{t\in[0,T]} H_{\epsilon}^{p}(u^{\epsilon}(t))\Big]\le  C(Q,T,\lambda,p,u_0,\alpha,\widetilde g).
	\end{align*}
\end{lm}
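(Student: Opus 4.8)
The plan is to reduce the assertion to a deterministic pointwise bound
\begin{align*}
|H_{\epsilon}(u^{\epsilon}(t))|\le C\Big(1+\|u^{\epsilon}(t)\|_{\mathbb H^1}^{2+2\delta}+\|u^{\epsilon}(t)\|_{L^2_{\alpha}}^{2}\Big)
\end{align*}
with a constant $C$ independent of $\epsilon\in(0,1]$ and of $(t,\omega)$, and then to raise this to the $p$-th power, take the supremum over $t\in[0,T]$ and the expectation, invoking the $\mathbb H^1$- and $L^2_{\alpha}$-moment estimates of Lemma~\ref{lm-con} (applied with exponents $(2+2\delta)p$ and $2p$) together with Young's inequality; the same bound shows that $F_{\epsilon}(|u^{\epsilon}(t)|^2)$ is a.s.\ finite, i.e.\ that the regularized energy is well defined. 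Since $\frac12\|\nabla u\|^2\le\frac12\|u\|_{\mathbb H^1}^2$, everything reduces to estimating the regularized entropy $F_{\epsilon}(|u|^2)=\int_{\mathcal O}G_{\epsilon}(|u(x)|^2)\,dx$, $G_{\epsilon}(\rho):=\int_0^{\rho}f_{\epsilon}(s)\,ds$, uniformly in $\epsilon$.

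I would split $\mathcal O$ into $\{|u|<1\}$ and $\{|u|\ge1\}$ and compare $f_{\epsilon}$ with $\log$ via $(A4)$. On $\{|u|<1\}$, the second bound in $(A4)$ gives $|f_{\epsilon}(s)-\log s|\le C\big(\frac{\epsilon}{\epsilon+s}+\epsilon\big)\le C$ uniformly for $s\le1$ and all $\epsilon\in(0,1]$; using the convergent improper integral $\int_0^{\rho}\log s\,ds=\rho\log\rho-\rho$ and $|\rho\log\rho|\le C_{\gamma}\rho^{1-\gamma}$ for $\rho\le1$ (any small $\gamma>0$), this yields $|G_{\epsilon}(\rho)|\le C_{\gamma}\rho^{1-\gamma}+C\rho$ for $\rho\le1$, and in particular $|G_{\epsilon}(1)|\le C$. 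On $\{|u|\ge1\}$, the first bound in $(A4)$ gives $|f_{\epsilon}(s)-\log s|\le C(\epsilon+(\epsilon s)^{\delta})\le C(1+s^{\delta})$ for $s\ge1$, so integrating from $1$ to $\rho$ and using $|G_{\epsilon}(1)|\le C$ and $\rho\log\rho\le C_{\delta}\rho^{1+\delta}$ for $\rho\ge1$ gives $|G_{\epsilon}(\rho)|\le C(1+\rho^{1+\delta})$. Integrating $|G_{\epsilon}(|u|^2)|$ over $\{|u|\ge1\}$ and using $|\{|u|\ge1\}|\le\|u\|^2$ together with the Sobolev embedding $\mathbb H^1\hookrightarrow L^{2+2\delta}$ (legitimate in the subcritical regime, which holds for $d\le2$ without restriction and for the model regularization $f_{\epsilon}(|x|^2)=\log\frac{\epsilon+|x|^2}{1+\epsilon|x|^2}$, for which even $f_{\epsilon}(s)\le\log(1+s)$, so an arbitrarily small exponent may be used) gives $\int_{\{|u|\ge1\}}|G_{\epsilon}(|u|^2)|\,dx\le C\big(1+\|u\|_{\mathbb H^1}^{2+2\delta}\big)$.

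For the contribution of $\{|u|<1\}$ when $\mathcal O=\mathbb R^d$, I would handle the only nontrivial term $\int_{\{|u|<1\}}|u|^{2-2\gamma}\,dx$ (which, since $2-2\gamma<2$, is not controlled by $\|u\|^2$ alone) by H\"older's inequality with exponents $\big(\frac1{1-\gamma},\frac1{\gamma}\big)$, writing it as $\big(\int(1+|x|^2)^{\alpha}|u|^2\,dx\big)^{1-\gamma}\big(\int(1+|x|^2)^{-\alpha(1-\gamma)/\gamma}\,dx\big)^{\gamma}$ and choosing $\gamma\in\big(0,\frac{2\alpha}{2\alpha+d}\big)$ so that the weight is integrable over $\mathbb R^d$; this gives $\int_{\{|u|<1\}}|u|^{2-2\gamma}\,dx\le C(\alpha,\gamma,d)\|u\|_{L^2_{\alpha}}^{2-2\gamma}$. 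On a bounded domain this step is immediate from H\"older's inequality against the finite measure, with no weight. Combining the two regions and absorbing lower powers by Young's inequality gives $|F_{\epsilon}(|u|^2)|\le C\big(1+\|u\|_{\mathbb H^1}^{2+2\delta}+\|u\|_{L^2_{\alpha}}^{2}\big)$ uniformly in $\epsilon\in(0,1]$, hence the pointwise bound on $|H_{\epsilon}(u^{\epsilon}(t))|$ above, and the moment estimate follows from Lemma~\ref{lm-con}.

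The main obstacle is precisely this entropy estimate near vacuum: it has to be \emph{simultaneously} uniform in $\epsilon$ --- which rules out the cruder bound $|f_{\epsilon}(|x|^2)|\le C(1+|\log\epsilon|)$ of $(A1)$ (that bound does give the quick well-definedness for each fixed $\epsilon$, but not the $\epsilon$-independent constant) and forces the use of the sharper comparison $(A4)$ with $\log(\cdot)$ --- and globally integrable over $\mathbb R^d$, which is why the weighted space $L^2_{\alpha}$ must enter and why one has to exploit that the regularized entropy density vanishes like $|u|^{2-2\gamma}$, with no surviving additive constant, as $|u|\to0$. The $\{|u|\ge1\}$ part, by contrast, is routine once the subcritical Sobolev embedding is available.
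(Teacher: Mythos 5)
Your proposal is correct and follows essentially the paper's own route: the same splitting at density $|u^{\epsilon}|=1$, the same comparison of $f_{\epsilon}$ with $\log$ via (A4), control of the high-density part through $L^{2+2\delta}$ and $\mathbb H^1$ (Sobolev embedding in place of the paper's Gagliardo--Nirenberg inequality \eqref{gn-sob}) and of the near-vacuum part through the weighted norm $\|\cdot\|_{L^2_{\alpha}}$ with the same admissibility condition $\gamma<\frac{2\alpha}{2\alpha+d}$ (your direct H\"older step with the weight is just the proof of \eqref{wei-sob}), followed by the uniform moment bounds of Lemma \ref{lm-con}. The only difference is organizational: you bound the integrand $G_{\epsilon}(\rho)$ pointwise before integrating, whereas the paper first estimates $|F_{\epsilon}(\rho^{\epsilon})-F(\rho^{\epsilon})|$ and then $F(\rho^{\epsilon})$; the substance is identical.
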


\begin{proof}
	Denote $\rho^{\epsilon}=|u^{\epsilon}|^2$ and $\rho^0=|u^0|^2.$
	According to Theorem \ref{mild-general} and Lemma \ref{lm-con}, applying the weighted interpolation inequality 
	\begin{align}\label{wei-sob}
	\|v\|_{L^{2-2\eta}}\le C \|v\|_{L^2_{\alpha}}^{\frac {d\eta}{2\alpha(1-\eta)}}\|v\|^{1-\frac {d\eta}{2\alpha(1-\eta)}}
	\end{align} 
	for $\alpha>\frac {d\eta}{2-2\eta}, \alpha\in (0,1]$,
	we have that $${\mathbb E}\Big[\sup_{t\in [0,T]}H^p(u^{0}(t))\Big]+\E\Big[\sup_{t\in [0,T]}\|{u^\epsilon}(t)\|_{\mathbb H^1}^{p}\Big]\le  C(Q,T,\lambda,p,u_0,\widetilde g).$$
	It only suffices to show the boundedness of 
	$\E\Big[\sup\limits_{t\in[0,T]} F_{\epsilon}(|u^{\epsilon}(t)|^2)^{p}\Big]$. 
	By using  (A4) and the property of the logarithmic function, we have that for small $\eta\in[0,1],$
	\begin{align*}
	&\quad|F_{\epsilon}(\rho^{\epsilon})-F_0(\rho^{\epsilon})|\\
	%&=|\int_{\mathcal O}\int_0^{\rho^{\epsilon}}(f_{\epsilon}(s)-f_0(s))ds dx|\\
	&\le \int_{\mathcal O\cap\{\rho^{\epsilon}\ge 1\}}\int_0^{\rho^{\epsilon}}|f_{\epsilon}(s)-f_0(s)|dsdx +\int_{\mathcal O\cap\{\rho^{\epsilon}\le 1\}}\int_0^{\rho^{\epsilon}}|f_{\epsilon}(s)-f_0(s)|dsdx\\
	&\le \int_{\rho^{\epsilon}\ge 1} \int_0^{\rho^{\epsilon}}C(\epsilon+(\epsilon|s|)^{\delta})dsdx+\int_{\rho^{\epsilon}\le 1} \int_0^{\rho^{\epsilon}}C(\frac {\epsilon}{\epsilon+s}+\epsilon)dsdx\\
	&\le \int_{\mathcal O} \int_0^{\rho^{\epsilon}}C(\epsilon+(\epsilon|s|)^{\delta})dsdx+ C\int_{\rho^{\epsilon}\le 1} \epsilon(\log(\epsilon+|u^{\epsilon}|^2)-\log(\epsilon))dx+C\epsilon \|u^{\epsilon}\|^2\\
	%&\le C\epsilon \|u^{\epsilon}\|^2+C\epsilon^{\delta} \|u^{\epsilon}\|_{L^{2+2\delta}}^{2+2\delta}+C\int_{\rho^{\epsilon}\le 1} \epsilon(\log(\epsilon+|u^{\epsilon}|^2)-\log(\epsilon))dx\\
	&\le C\epsilon \|u^{\epsilon}\|^2+C\epsilon^{\delta} \|u^{\epsilon}\|_{L^{2+2\delta}}^{2+2\delta}+C\epsilon^{\eta} |\log(\epsilon)|^{\eta} |u^{\epsilon}\|_{L^{2-2\eta}}^{2-2\eta}.
	\end{align*}
	
	Using \eqref{wei-sob}
	with $\alpha>\frac {d\eta}{2-2\eta}, \alpha\in (0,1]$, the Gagliardo--Nirenberg interpolation inequality,
	\begin{align}\label{gn-sob}
	\|v\|_{L^{2+2\delta}}\le C \|\nabla v\|^{\frac {d\delta}{2+2\delta}}\|v\|^{1-\frac {d\delta}{2+2\delta }}, \delta\in \big(0,\frac 2{\max(d-2,0)}\big)
	\end{align}
	and the uniform boundedness of $u^{\epsilon}$ in $\mathbb H^1$, we achieve that  
	\begin{align*}
	\E\Big[\sup_{t\in[0,T]}|F_{\epsilon}(\rho^{\epsilon}(t))-F(\rho^{\epsilon}(t))|^p\Big]&\le C(Q,T,\lambda,p,u_0,\widetilde g).
	\end{align*}
	Therefore, it suffices to show the uniform boundedness of $F(|u^{\epsilon}|^2).$
	Adopting \eqref{gn-sob} and \eqref{wei-sob}, 
	we get 
	\begin{align*}
	F(\rho^{\epsilon})%&=\int_{\mathcal O}\rho^{\epsilon}(\log(\rho^{\epsilon})-1)dx\\
	&\le \|u^{\epsilon}\|^2+\int_{|u^{\epsilon}|\ge1} |u^{\epsilon}|^{2+2\delta}dx+\int_{|u^{\epsilon}|<1}|u^{\epsilon}|^{2-2\eta}dx\\
	&\le \|u^{\epsilon}\|^2+C\|\nabla u^{\epsilon}\|^{{d\delta}}\|u^{\epsilon}\|^{2+2\delta-d\delta}+ C\|u^{\epsilon}\|_{L^2_{\alpha}}^{\frac {d\eta}{\alpha}}\|u^{\epsilon}\|^{2-2\eta-\frac {d\eta}{\alpha}}.
	\end{align*}
	Using the uniform boundedness of $u^{\epsilon}$ in $\mathbb H^1\cap L_{\alpha}^2$ and Young's inequality, we obtain that 
	\begin{align*}
	\E\Big[\sup_{t\in[0,T]}|F(\rho^{\epsilon}(t))|^p\Big]&\le C(Q,T,\lambda,p,u_0,\alpha,\widetilde g).
	\end{align*}
	We complete the proof by combining the above estimates.
\end{proof}

\begin{rk}
	The  conditions (A1) and (A2) in Assumption \ref{main-reg-fun} are not necessary when proving the boundedness of the regularized  energy for Eq. \eqref{Reg-SlogS}. However, it is crucial  to analyze the strong convergence of numerical method due to loss of regularity in both time and space of the solutions to both Eq. \eqref{SlogS} and Eq. \eqref{Reg-SlogS}.
\end{rk}

\begin{cor}\label{cor-entropy}
	Let Assumption \ref{main-reg-fun} and the condition of Theorem \ref{mild-general} hold. Let $u^{0}$ and $u^\epsilon$ be the mild solutions of { Eq.} \eqref{SlogS} and { Eq.} \eqref{Reg-SlogS}, respectively. 
	Then the regularized  entropy is  strongly convergent to the entropy of Eq.  \eqref{SlogS}. Furthermore, for $p\ge1$ and $\delta\in \Big(0,\max(\frac 2{\max(d-2,0)},1)\Big),$ there exist $C(Q,T,\lambda,p,u_0,\delta,\widetilde g)>0$ and $C(Q,T,\lambda,p,u_0,\alpha,\delta,\widetilde g)>0$  such that when $\mathcal O$ is a bounded domain,
	\begin{align*}
	\sup_{t\in[0,T]}\E\Big[|F_{\epsilon}(\rho^{\epsilon}(t))-F(\rho^{0}(t))|^p\Big] \le C(Q,T,\lambda,p,u_0,\delta ,\widetilde g)(\epsilon^{\frac p2}+\epsilon^{\frac {{\delta}p} 2})
	\end{align*}
	and when $\mathcal O=\mathbb R^d,$
	\begin{align*}
	\sup_{t\in[0,T]}\E\Big[|F_{\epsilon}(\rho^{\epsilon}(t))-F(\rho^{0}(t))|^p\Big]
	&\le C(Q,T,\lambda,p,u_0,\alpha,\delta ,\widetilde g)(\epsilon^{\frac {\alpha p}{2\alpha+d}}+\epsilon^{\frac {{ \delta}p} 2}).
	\end{align*}
	
\end{cor}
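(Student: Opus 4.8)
The plan is to derive Corollary~\ref{cor-entropy} by inserting the intermediate quantity $F(\rho^{\epsilon})$ and applying the triangle inequality,
\[
|F_{\epsilon}(\rho^{\epsilon})-F(\rho^{0})| \le |F_{\epsilon}(\rho^{\epsilon})-F(\rho^{\epsilon})| + |F(\rho^{\epsilon})-F(\rho^{0})|,
\]
so that the first term is controlled by the regularization error of $f_\epsilon$ (Assumption~\ref{main-reg-fun}(A4)) and the second by the strong convergence $u^\epsilon\to u^0$ from Lemma~\ref{lm-con}. For the first term I would reuse verbatim the computation already carried out in the proof of Lemma~\ref{lm-mod-en}: splitting $\mathcal O$ into $\{\rho^\epsilon\ge1\}$ and $\{\rho^\epsilon\le1\}$, using (A4) on each piece, and bounding the resulting $L^{2+2\delta}$ and $L^{2-2\eta}$ norms via the Gagliardo--Nirenberg inequality~\eqref{gn-sob} and the weighted interpolation inequality~\eqref{wei-sob} together with the $\epsilon$-uniform $\mathbb H^1\cap L^2_\alpha$ bounds from Lemma~\ref{lm-con}. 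Tracking the explicit powers of $\epsilon$ (rather than just boundedness) gives a contribution of order $\epsilon^{p/2}+\epsilon^{\delta p/2}$ in the bounded-domain case, and $\epsilon^{\alpha p/(2\alpha+d)}+\epsilon^{\delta p/2}$ on $\mathbb R^d$ after optimizing the interpolation exponent $\eta$ against $\alpha$ and $d$ exactly as in Lemma~\ref{lm-mod-en}.

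For the second term I would establish a local Lipschitz-type estimate for the entropy functional $F(\rho)=\int_{\mathcal O}(\rho\log\rho-\rho)\,dx$ with $\rho=|u|^2$. Writing $\Phi(u):=F(|u|^2)$ and using that $|\,|u^\epsilon|^2\log|u^\epsilon|^2 - |u^0|^2\log|u^0|^2\,|$ is controlled, near the origin, by $\big||u^\epsilon|^2-|u^0|^2\big|\,(1+|\log|u^\epsilon|^2| + |\log|u^0|^2|)$ and, for large values, by $\big||u^\epsilon|-|u^0|\big|\,(|u^\epsilon|+|u^0|)(1+|u^\epsilon|^{2\delta}+|u^0|^{2\delta})$, one gets after integration
\[
|F(\rho^\epsilon)-F(\rho^0)| \le C\,\|u^\epsilon-u^0\|\;\big(1 + \|u^\epsilon\|_{\mathbb H^1} + \|u^0\|_{\mathbb H^1} + \|u^\epsilon\|_{L^2_\alpha} + \|u^0\|_{L^2_\alpha}\big)^{\theta}
\]
for a suitable exponent $\theta$, by Hölder's inequality combined once more with \eqref{gn-sob} and \eqref{wei-sob} to absorb the logarithmic and polynomial weights. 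Taking the $p$-th moment, applying Hölder's inequality in $\Omega$ to separate $\E[\|u^\epsilon-u^0\|^{2p}]^{1/2}$ from the (uniformly bounded, by Theorem~\ref{mild-general} and Lemma~\ref{lm-con}) moments of the Sobolev norms, and invoking the rate for $\E[\sup_t\|u^0(t)-u^\epsilon(t)\|^{2p}]$ from Lemma~\ref{lm-con}, yields a contribution of the same order $\epsilon^{p/2}+\epsilon^{\delta p/2}$ (resp. $\epsilon^{\alpha p/(2\alpha+d)}+\epsilon^{\delta p/2}$). Combining the two terms and noting $\sup_t\E[\cdot]\le\E[\sup_t\cdot]$ gives the claimed bounds; strong convergence of the entropy then follows by letting $\epsilon\to0$.

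The main obstacle I anticipate is the control of the entropy difference near the vacuum set $\{|u^0|=0\}\cup\{|u^\epsilon|=0\}$, where $\log\rho$ is unbounded: one cannot simply bound $\nabla_\rho(\rho\log\rho-\rho)=\log\rho$ uniformly, so the Lipschitz estimate for $F$ must be set up carefully, for instance by using the elementary inequality $|a\log a - b\log b|\le |a-b|\,(1+|\log a| + |\log b|)$ valid for $a,b\in[0,1]$ and then trading the logarithmic singularity for a small negative power of $\rho$ via $|\log\rho|\le C_\eta \rho^{-\eta}$, which is exactly where the weighted space $L^2_\alpha$ and the constraint $\alpha>\frac{d\eta}{2-2\eta}$ enter (and forces the worse rate $\epsilon^{\alpha p/(2\alpha+d)}$ on $\mathbb R^d$). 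A secondary technical point is bookkeeping the interplay of the three small parameters $\epsilon$, $\delta$, $\eta$ and checking that the admissible range $\delta\in\big(0,\max(\tfrac{2}{\max(d-2,0)},1)\big)$ is precisely what makes \eqref{gn-sob} applicable; but this is routine once the near-vacuum estimate is in place, and indeed most of it has already been done inside the proof of Lemma~\ref{lm-mod-en}.
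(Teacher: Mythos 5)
Your proposal is correct and follows essentially the same route as the paper: the same decomposition $|F_\epsilon(\rho^\epsilon)-F(\rho^0)|\le|F_\epsilon(\rho^\epsilon)-F(\rho^\epsilon)|+|F(\rho^\epsilon)-F(\rho^0)|$, with the first term handled exactly as in Lemma \ref{lm-mod-en} via (A4), \eqref{gn-sob} and \eqref{wei-sob}, and the second via a log-Lipschitz estimate on the entropy (the paper phrases it by splitting over $\{|u^0|>|u^\epsilon|\}$ and $\{|u^0|<|u^\epsilon|\}$, trading $|\log\rho|$ for $\rho^{\delta}$ and $\rho^{-\eta}$, which is the same device you describe), Hölder's inequality, the a priori bounds, and the strong convergence rate of Lemma \ref{lm-con}. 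Your near-vacuum treatment and moment bookkeeping match the paper's argument, so no gap.
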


\begin{proof}
Similar arguments as in the proof of Lemma \ref{lm-mod-en} yield that %we decompose the term $F_{\epsilon}(\rho^{\epsilon})-F_0(\rho^{\epsilon})$ as
	\begin{align*}
	&|F_{\epsilon}(\rho^{\epsilon})-F (\rho^{0})|\\
	&=|F (\rho^{\epsilon})-F (\rho^0)|+ |F_{\epsilon}(\rho^{\epsilon})-F(\rho^{\epsilon})|\\
	&\le |F(\rho^{\epsilon})-F(\rho^0)|+C\epsilon \|u^{\epsilon}\|^2+C\epsilon^{\delta} \|u^{\epsilon}\|_{L^{2+2\delta}}^{2+2\delta}+C\epsilon^{\eta}(|\log(\epsilon)|)^{\eta} |u^{\epsilon}\|_{L^{2-2\eta}}^{2-2\eta},
	\end{align*}
	where $\eta>0$ is small enough and $\delta(d-2)\le 2$. %Next it suffices to estimate $F_0(\rho^{\epsilon})-F_0(\rho^0)$.
	Notice that
	\begin{align*}
	&F(\rho^{\epsilon})-F(\rho^0)\\
	=& \int_{|u^0|>|u^{\epsilon}|} (\log(|u^{\epsilon}|^2)-\log(|u^0|^2))|u^{\epsilon}|^2dx
	+\int_{|u^0|>|u^{\epsilon}|} \log(|u^0|^2)(|u^{\epsilon}|^2-|u^0|^2)dx\\
	&+\int_{|u^0|<|u^{\epsilon}|} (\log(|u^{\epsilon}|^2)-\log(|u^0|^2)) |u^0|^2 dx
	+\int_{|u^0|<|u^{\epsilon}|} \log(|u^{\epsilon}|^2)(|u^{\epsilon}|^2-|u^0|^2) dx\\
	&+\|u^0\|^2-\|u^{\epsilon}\|^2.
	\end{align*}
	{\small The property of the logarithmic function and H\"older's inequality yield that 
		\begin{align*}
		&|F(\rho^{\epsilon})-F(\rho^0)|\\
		\le& |\|u^{\epsilon}\|^2-\|u^0\|^2|+
		C\|u^0-u^{\epsilon}\|(\|u^0+u^{\epsilon}\|+\|u^0\|_{L^{2+2\delta}}^{1+\delta}+\|u^0\|_{L^{2-2\eta}}^{1-\eta}+\|u^{\epsilon}\|_{L^{2+2\delta}}^{1+\delta}+\|u^{\epsilon}\|_{L^{2-2\eta}}^{1-\eta}).
		\end{align*}
		B}y making use of \eqref{gn-sob} and \eqref{wei-sob}, together with a priori estimate of $u^{\epsilon}$ in $\mathbb H^1$ and Lemma \ref{lm-con}, we complete the proof.
\end{proof}

Since the regularized energy of Eq. \eqref{Reg-SlogS} is well-defined, we further show that Eq. \eqref{Reg-SlogS} is an infinite-dimensional stochastic Hamiltonian system whose phase flow preserves stochastic symplectic structure via a standard argument in \cite{CH16}. 
The key of the proof lies on the fact  that  \eqref{Reg-SlogS} can be rewritten into  an infinite-dimensional stochastic Hamiltonian system
\begin{align*}
dP^\epsilon=-\frac {\delta H_{\epsilon}}{\delta Q^\epsilon}dt-\frac {\delta H_{Sto}}{\delta Q^\epsilon}\circ dW(t),\quad
dQ^\epsilon= \frac {\delta H_{\epsilon}}{\delta P^\epsilon}dt+\frac {\delta H_{Sto}}{\delta P^\epsilon}\circ dW(t)
\end{align*}
with $P^\epsilon$ (resp., $Q^\epsilon$) being the real (resp., imaginary) part of the solution $u^{\epsilon}.$ Here $H_{Sto}:=\int_{\mathcal O} \int_{0}^{(P^\epsilon)^2+(Q^\epsilon)^2}g(s)ds dx$ when $\widetilde g(x)=\mathbf i g(|x|^2)x$.
For the additive noise case, i.e., $\widetilde g=1$, the proof is similar.  
\begin{prop}
	Let Assumption \ref{main-reg-fun} and the condition of Theorem \ref{mild-general} hold. 
	Assume in addition that $\{W(t)\}_{t\ge 0}$ is $\mathbb H$-valued, $\widetilde g=1$ or that  $\{W(t)\}_{t\ge 0}$ is $L^2(\mathcal O;\mathbb R)$-valued, $\widetilde g(x)=\mathbf ig(|x|^2)x.$
	The phase flow of Eq.  \eqref{Reg-SlogS} preserves the stochastic symplectic structure, i.e., 
	$$\bar \omega(t)=\int_{\mathcal O}{\rm d}P^\epsilon(t)\wedge {\rm d}Q^\epsilon(t) dx=\int_{\mathcal O}{\rm d}P^\epsilon(0)\wedge {\rm d}Q^\epsilon(0) dx=\bar \omega(0),\; a.s.$$
\end{prop}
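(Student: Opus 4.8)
\emph{Proof plan.} The plan is to follow the standard argument for stochastic Hamiltonian systems (cf.~\cite{CH16}), carried out in the function-space setting justified by the a priori estimates of Theorem~\ref{mild-general} and Lemma~\ref{lm-con}. The starting point is the reformulation of Eq.~\eqref{Reg-SlogS}, after splitting $u^\epsilon$ into its real and imaginary parts $(P^\epsilon,Q^\epsilon)$, as the infinite-dimensional stochastic Hamiltonian system displayed above: the deterministic part is generated by $H_\epsilon=\frac12\|\nabla\cdot\|^2-\frac{\lambda}{2}F_\epsilon$, a sum of the Dirichlet energy and the real-valued potential functional $F_\epsilon(|u^\epsilon|^2)=\int_{\mathcal O}\int_0^{(P^\epsilon)^2+(Q^\epsilon)^2}f_\epsilon(s)\,ds\,dx$, while the noise is generated by $H_{Sto}$, which in the multiplicative case is the real-valued potential functional $\int_{\mathcal O}\int_0^{(P^\epsilon)^2+(Q^\epsilon)^2}g(s)\,ds\,dx$ and in the additive case is linear in $(P^\epsilon,Q^\epsilon)$. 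The restriction to an $L^2(\mathcal O;\R)$-valued $W$ in the multiplicative case is exactly what is needed here: for a complex-valued $W$ the increment $\bi g(|u^\epsilon|^2)u^\epsilon\circ\mathrm dW$ would contain a component that is not a Hamiltonian vector field.

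First I would write down the variational equation. Applying the phase-space exterior derivative $\mathrm d$ to the system (equivalently, differentiating the flow with respect to the initial datum), the pair $\xi:=\mathrm dP^\epsilon$, $\eta:=\mathrm dQ^\epsilon$ satisfies a linear system of the form
\begin{align*}
d\xi&=-\big(\mathcal A\,\xi+\mathcal C\,\eta\big)\,dt-\big(\mathcal A^s\,\xi+\mathcal C^s\,\eta\big)\circ\mathrm dW,\\
d\eta&=\big(\mathcal B\,\xi+\mathcal A\,\eta\big)\,dt+\big(\mathcal B^s\,\xi+\mathcal A^s\,\eta\big)\circ\mathrm dW,
\end{align*}
where $\mathcal B=\frac{\delta^2 H_\epsilon}{\delta (P^\epsilon)^2}$, $\mathcal C=\frac{\delta^2 H_\epsilon}{\delta (Q^\epsilon)^2}$, $\mathcal A=\frac{\delta^2 H_\epsilon}{\delta P^\epsilon\delta Q^\epsilon}=\frac{\delta^2 H_\epsilon}{\delta Q^\epsilon\delta P^\epsilon}$, and $\mathcal A^s,\mathcal B^s,\mathcal C^s$ are the corresponding second variations of $H_{Sto}$. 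The structural facts I need are: $\mathcal B$ and $\mathcal C$ are $-\Delta$ plus multiplication by a real function (since $F_\epsilon$ is a real potential), hence self-adjoint; $\mathcal A,\mathcal A^s,\mathcal B^s,\mathcal C^s$ are multiplication operators by real functions, hence self-adjoint; and the mixed variation is symmetric. For fixed $\epsilon$ all these multiplication operators are bounded — using the derivative bound (A5) and the model choice $f_\epsilon(|x|^2)=\log\frac{\epsilon+|x|^2}{1+\epsilon|x|^2}$ for the $F_\epsilon$ part, and $g\in\mathcal C^1_b$ with the growth conditions of Assumption~\ref{main-as} for the $H_{Sto}$ part — which also yields well-posedness of the linearized system and differentiability of the flow.

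Next I would compute $\mathrm d\bar\omega(t)$. Because the noise is in Stratonovich form the ordinary product rule applies: $\mathrm d(\xi\wedge\eta)=(d\xi)\wedge\eta+\xi\wedge(d\eta)$. Substituting the variational system, every term in which a one-form is wedged with itself vanishes, leaving only $\int_{\mathcal O}\xi\wedge\mathcal B\xi\,dx$, $\int_{\mathcal O}\mathcal C\eta\wedge\eta\,dx$ (and the $H_{Sto}$ analogues) together with the cross terms $\int_{\mathcal O}\big(\xi\wedge\mathcal A\eta-\mathcal A\xi\wedge\eta\big)dx$. For a self-adjoint operator $\mathcal S$ acting in the $x$-variable and any two one-form-valued functions $\varphi,\psi$ on $\mathcal O$ one has $\int_{\mathcal O}(\mathcal S\varphi)\wedge\psi\,dx=\int_{\mathcal O}\varphi\wedge(\mathcal S\psi)\,dx$ — for a multiplication operator this is pointwise bilinearity, for $-\Delta$ it is integration by parts with the boundary contribution removed by the boundary condition (or by decay when $\mathcal O=\R^d$). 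Combined with the antisymmetry $\varphi\wedge\psi=-\psi\wedge\varphi$ this gives $\int_{\mathcal O}\xi\wedge\mathcal B\xi\,dx=0$, $\int_{\mathcal O}\mathcal C\eta\wedge\eta\,dx=0$, and cancellation of the cross terms (using the symmetry of $\mathcal A$), and likewise for the $H_{Sto}$ pieces. Therefore the $dt$- and $\circ\,\mathrm dW$-coefficients of $\mathrm d\bar\omega(t)$ both vanish, so $\bar\omega(t)=\bar\omega(0)$ almost surely.

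The algebraic cancellation is the routine part; the real work, and the step I expect to be the main obstacle, is the functional-analytic justification: showing that the stochastic flow of Eq.~\eqref{Reg-SlogS} is differentiable in its initial datum in $\HH^1\cap L^2_\alpha$ and that the variational system above is well posed there (which uses the fixed-$\epsilon$ regularity and derivative bounds of Assumption~\ref{main-reg-fun} together with Lemma~\ref{lm-con}), and, in the case $\mathcal O=\R^d$, legitimizing the integration by parts in the $-\Delta$ terms and the interchange of $\int_{\mathcal O}$, the exterior derivative, and the Stratonovich integral — for which the weighted bound $\E[\sup_{t\in[0,T]}\|u^\epsilon(t)\|_{L^2_\alpha}^p]<\infty$ and the $\HH^1$ bound from Lemma~\ref{lm-con} control the spatial tails. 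With these technical points in place, the argument concludes exactly as in \cite{CH16}.
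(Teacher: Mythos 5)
Your proposal is correct and follows essentially the same route as the paper: the paper's argument consists precisely of rewriting Eq. \eqref{Reg-SlogS} in the variables $(P^\epsilon,Q^\epsilon)$ as an infinite-dimensional stochastic Hamiltonian system with Hamiltonians $H_\epsilon$ and $H_{Sto}$ (using that $H_\epsilon$ is well-defined by Lemma \ref{lm-mod-en}) and then invoking the standard variational-equation argument of \cite{CH16}, which is exactly the computation you spell out. Your additional remarks on the symmetry of the second variations, the role of the $L^2(\mathcal O;\mathbb R)$-valued noise in the multiplicative case, and the justification of integration by parts on $\mathbb R^d$ are consistent with, and merely elaborate on, what the paper leaves to the cited reference.
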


Here, $\bar \omega$ is the differential 2-form integrated over the space and {\rm {d} denotes}  differential in the phase space taken with respect to the initial data.
Although the convergence result in Corollary \ref{cor-entropy} gives the approximation error between the entropy of $u^{\epsilon}$ and that of $u,$  it is still unknown what is the error between the regularization energy and the original energy due to loss of regularity in both time and space for Eq.  \eqref{SlogS} and Eq. \eqref{Reg-SlogS}. To overcome this issue, the weak convergence analysis on the regularization energy is introduced.

\begin{prop}\label{reg-ene}
	Let Assumption \ref{main-reg-fun} and the condition of Theorem \ref{mild-general} hold. Let $u^{0}$ and $u^\epsilon$ be the mild solutions of Eq. \eqref{SlogS} and Eq.  \eqref{Reg-SlogS}, respectively. Assume in addition that $\widetilde g(x)=\mathbf i x$ and that 
	\begin{align}\label{add-f}
	|1-\frac {\partial f_{\epsilon}} {\partial x}(x)x|\le C\epsilon\frac {1+\epsilon x+ x^2}{(\epsilon+x)(1+\epsilon x)}\quad{ \forall~x>0.}
	\end{align}
	Then the regularized energy is  convergent to the energy of Eq.  \eqref{SlogS}. 
	Furthermore, for $p\ge1$ and $\delta\in \Big(0,\max(\frac 2{\max(d-2,0)},1)\Big),$  there exist {$C(Q,T,\lambda,p,u_0,\delta)>0$ and $C(Q,T,\lambda,p,u_0,\alpha,\delta)>0$ such that}
	for $t\in [0,T],$ when $\mathcal O$ is a bounded domain,
	\begin{align*} 
	\E\Big[H_{\epsilon}(u^{\epsilon}(t))-H(u^0(t))\Big] \le C(Q,T,\lambda,p,u_0,\delta)(\epsilon^{\frac 12}+\epsilon^{\frac {{ \delta}} 2}),
	\end{align*}
	and when $\mathcal O=\mathbb R^d,$
	\begin{align*}
	\E\Big[H_{\epsilon}(u^{\epsilon}(t))-H(u^0(t))\Big] 
	&\le C(Q,T,\lambda,p,u_0,\alpha,\delta)(\epsilon^{\frac {\alpha }{2\alpha+d}}+\epsilon^{\frac {{ \delta}} 2}).
	\end{align*}
\end{prop}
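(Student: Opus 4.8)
The plan is to estimate $\E[H_\epsilon(u^\epsilon(t))-H(u^0(t))]$ by splitting it into the gradient part and the entropy part,
\[
\E[H_\epsilon(u^\epsilon(t))-H(u^0(t))] = \tfrac12\E[\|\nabla u^\epsilon(t)\|^2-\|\nabla u^0(t)\|^2] -\tfrac{\lambda}{2}\E[F_\epsilon(\rho^\epsilon(t))-F(\rho^0(t))],
\]
and handling the two pieces separately. The entropy part is already controlled in Corollary~\ref{cor-entropy} with exactly the claimed rates, so the new content is the gradient part. The crucial point is that one cannot hope to bound $\E[\|\nabla u^\epsilon\|^2-\|\nabla u^0\|^2]$ by a strong error estimate, since $u^\epsilon$ and $u^0$ are only close in $\mathbb H$, not in $\mathbb H^1$; instead one exploits the \emph{conservation/evolution laws} available when $\widetilde g(x)=\mathbf i x$. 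First I would recall that for linear multiplicative noise the mass $M(u)=\|u\|^2$ is conserved pathwise for both $u^0$ and $u^\epsilon$, so $M(u^\epsilon(t))=M(u^0(t))=\|u_0\|^2$ a.s.

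Next I would apply the It\^o formula to the regularized energy $H_\epsilon(u^\epsilon)$ and to the energy $H(u^0)$ — using the Stratonovich-to-It\^o correction implicit in the $\star dW$ term, which for $\widetilde g(x)=\mathbf i x$ is a genuine conservative perturbation, so that $H_\epsilon(u^\epsilon(t))$ satisfies a closed evolution equation whose drift involves only a trace term of the form $c_Q\,\E[\,\cdot\,]$ plus terms coming from the mismatch between $f_\epsilon(\rho)$ and its derivative. Doing the same for $H(u^0(t))$ (formally, or rigorously via the already-established $\mathbb H^1$ a priori bounds and a smoothing/Galerkin approximation), one gets
\[
\E[H_\epsilon(u^\epsilon(t))-H(u^0(t))] = H_\epsilon(u_0)-H(u_0) + \int_0^t \E[\mathcal R_\epsilon(s)]\,ds,
\]
where $\mathcal R_\epsilon$ collects the drift discrepancies. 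The initial-data term $H_\epsilon(u_0)-H(u_0)= -\tfrac{\lambda}{2}(F_\epsilon(|u_0|^2)-F(|u_0|^2))$ is bounded by the $\epsilon$-rates of Corollary~\ref{cor-entropy} applied at $t=0$ (or directly via (A4) as in Lemma~\ref{lm-mod-en}). The remainder $\mathcal R_\epsilon$ is where hypothesis~\eqref{add-f} enters: the term $1-\tfrac{\partial f_\epsilon}{\partial x}(x)x$ measures exactly how far the regularized drift is from the one generating $H_\epsilon$ along the flow of \eqref{Reg-SlogS}, and \eqref{add-f} gives the pointwise bound $C\epsilon\,(1+\epsilon x+x^2)/((\epsilon+x)(1+\epsilon x))$. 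I would split the integration over $\{|u^\epsilon|\ge1\}$ and $\{|u^\epsilon|<1\}$: on the large-density set the factor is $\lesssim \epsilon(1+x)$, controlled by $\|u^\epsilon\|^2 + \|u^\epsilon\|_{L^{2+2\delta}}^{2+2\delta}$ via the Gagliardo--Nirenberg inequality \eqref{gn-sob}; on the small-density set it is $\lesssim \epsilon/(\epsilon+x)$, controlled after integration in $s$ by the $\epsilon(\log(\epsilon+|u^\epsilon|^2)-\log\epsilon)$ bookkeeping from Lemma~\ref{lm-mod-en}, which on $\mathbb R^d$ brings in the $L^2_\alpha$ norm through the weighted interpolation \eqref{wei-sob}. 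All the resulting norms are bounded uniformly in $\epsilon$ by Lemma~\ref{lm-con}, so one obtains $\int_0^t\E[|\mathcal R_\epsilon(s)|]\,ds \le C(\epsilon^{1/2}+\epsilon^{\delta/2})$ on a bounded domain and $C(\epsilon^{\alpha/(2\alpha+d)}+\epsilon^{\delta/2})$ on $\mathbb R^d$, matching the statement.

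The main obstacle I expect is making the It\^o-formula argument for $H(u^0)$ rigorous: $H$ involves the non-Lipschitz logarithmic entropy and $u^0$ is only an $\mathbb H^1$-valued mild solution, so one must either pass through the regularized family itself (writing $H(u^0)-H(u^\epsilon)$ as a limit and using Corollary~\ref{cor-entropy} together with weak lower semicontinuity of $\|\nabla\cdot\|^2$) or justify the chain rule for $F$ directly by a cutoff of the logarithm combined with dominated convergence using the uniform $\mathbb H^1\cap L^2_\alpha$ moment bounds. A secondary technical point is that the cancellation producing \eqref{add-f} is exactly what forces the restriction to $\widetilde g(x)=\mathbf i x$ (for general $g$ the stochastic Hamiltonian $H_{Sto}$ would contribute extra drift terms that do not telescope), so the structure of the noise must be used essentially and cannot be relaxed within this argument.
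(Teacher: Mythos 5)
Your proposal follows essentially the same route as the paper's proof: apply the It\^o formula to $H_{\epsilon}(u^{\epsilon})$ and $H(u^{0})$, take expectations so the martingale parts vanish, bound the initial entropy gap via (A4)/Corollary~\ref{cor-entropy}, and control the drift discrepancies (the trace term and the $\frac{\partial f_{\epsilon}}{\partial x}$ term) using \eqref{add-f}, the interpolation inequalities \eqref{gn-sob} and \eqref{wei-sob}, and the strong error and moment bounds of Lemma~\ref{lm-con}. One small correction: for $\widetilde g(x)=\mathbf i x$ with complex ($\mathbb H$-valued) noise the mass is conserved only in expectation, not pathwise, but this side remark is not actually used in your argument.
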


\begin{proof}
	\iffalse
	Denote
	\begin{align*}
	D_u F_{\epsilon}&:=2 f_{\epsilon}(|u|^2),\\
	D_{uu}F_{\epsilon}&:=2 \frac{\partial f_{\epsilon}(|u|^2)}{\partial x}.
	\end{align*}
	\fi
	Applying the It\^o formula to $H_{\epsilon}(u^{\epsilon}(t))$ yields that 
	\begin{align*}
	H_{\epsilon}(u^{\epsilon}(t))=&H_{\epsilon}(u_0)%+\int_0^t\<-\Delta u^{\epsilon},\bi \lambda f_{\epsilon}(|u^{\epsilon}|^2)u^{\epsilon}\>ds
	+\int_0^t\<-\Delta u^{\epsilon},\widetilde g (u^{\epsilon})dW(s)\>+\int_0^t\frac 12\sum_{i\in \mathbb N^+}\|\widetilde g (u^{\epsilon})\nabla Q^{\frac 12}e_i\|^2ds%-\lambda \int_0^t\Big\< f_{\epsilon}(|u^{\epsilon}|^2)u^{\epsilon},\bi \Delta u^{\epsilon}\Big\>ds
	\\
	%&-\lambda\int_0^t\Big\< f_{\epsilon}(|u^{\epsilon}|^2)u^{\epsilon},-\frac 12u^{\epsilon}\sum_{k}|Q^{\frac 12}e_i|^2\Big\>ds \\
	&-\lambda\int_0^t \Big\< f_{\epsilon}(|u^{\epsilon}|^2)u^{\epsilon},\widetilde g (u^{\epsilon})dW(s)\Big\>\\
	%&-\frac 12 \lambda \int_0^t\sum_{k}\Big\<f_{\epsilon}(|u^{\epsilon}|^2)\widetilde g (u^{\epsilon})Q^{\frac 12}e_i,\widetilde g (u^{\epsilon})Q^{\frac 12}e_i\Big\>ds\\
	&
	-\frac 12 \lambda \int_0^t\sum_{i\in \mathbb N^+}\Big\<2 Re(\bar u^{\epsilon}\widetilde g (u^{\epsilon})Q^{\frac 12}e_i)\frac {\partial f_{\epsilon}}{\partial x}(|u^{\epsilon}|^2)u^{\epsilon},\widetilde g (u^{\epsilon})Q^{\frac 12}e_i\Big\>ds.
	\end{align*}
	Then subtracting $H_{\epsilon}(u^{\epsilon}(t))$ from $H(u^0(t))$ and taking expectation, we get 
	\begin{align*}
	&\E\Big[H(u^0(t))-H_{\epsilon}(u^{\epsilon}(t))\Big]
	= \E\Big[H(u^0(0))-H_{\epsilon}(u^{\epsilon}(0))\Big]
	\\
	+&\int_0^t\frac 12\sum_{i\in \mathbb N^+}{\mathbb E}\Big(\|\widetilde g (u^{0})\nabla Q^{\frac 12}e_i\|^2-\|\widetilde g (u^{\epsilon})\nabla Q^{\frac 12}e_i\|^2\Big)ds\\
	%&-\lambda\int_0^t\sum_{k} \E\Big[\Big\<f_{0}(|u^{0}|^2)u^{0},-\frac 12u^{0}|Q^{\frac 12}e_i|^2\Big\>-\Big\<f_{\epsilon}(|u^{\epsilon}|^2)u^{\epsilon},-\frac 12u^{\epsilon} |Q^{\frac 12}e_i|^2\Big\>\Big]ds \\
	%&-\frac 12 \lambda \int_0^t\sum_{k}\E\Big[\Big\<f_{0}(|u^{0}|^2)\widetilde g (u^{0})Q^{\frac 12}e_i,\widetilde g (u^{0})Q^{\frac 12}e_i\Big\>-\Big\<f_{\epsilon}(|u^{\epsilon}|^2)\widetilde g (u^{\epsilon})Q^{\frac 12}e_i,\widetilde g (u^{\epsilon})Q^{\frac 12}e_i\Big\>\Big]ds\\
	&
	-\frac 12 \lambda \int_0^t\sum_{i\in \mathbb N^+}\mathbb E\Big[\Big\<2 Re(\bar u^{0}\widetilde g (u^{0})Q^{\frac 12}e_i)\frac {\partial f_{0}}{\partial x}(|u^{0}|^2)u^{0},\widetilde g (u^{0})Q^{\frac 12}e_i\Big\>\\
	&\quad-\Big\<2 Re(\bar u^{\epsilon}\widetilde g (u^{\epsilon})Q^{\frac 12}e_i)\frac {\partial f_{\epsilon}}{\partial x}(|u^{\epsilon}|^2)u^{\epsilon},\widetilde g (u^{\epsilon})Q^{\frac 12}e_i\Big\>\Big]ds\\
	=&: \frac {\lambda} 2\E [F_{\epsilon}(\rho^{\epsilon}(t))-F(\rho^{0}(t)]+ I_1+I_2.
	\end{align*}
	The H\"older inequality yields  
	\begin{align*}
	|I_1|&\le \int_0^t\frac 12 \Big|\sum_{i\in \mathbb N^+}\E\Big[\int_{\mathcal O}|\nabla Q^{\frac 12}e_i|^2(|u^{\epsilon}|^2-|u^0|^2)dx\Big]\Big|ds\\
	& \le \frac 12 {\sum_{i\in \mathbb N^+}\|\nabla Q^{\frac 12}e_i\|_{L^{\infty}}^2} T\sup_{t\in[0,T]}\E\Big[\|u^{\epsilon}{(t)}-u^0(t)\|\|u^{\epsilon}(t)+u^0(t)\|\Big].
	\end{align*}
	The fact that $\widetilde g(x)=\mathbf i x$ and \eqref{add-f}, together with the H\"older inequality and \eqref{gn-sob}, imply that for $\delta \in (0,1]$, $\alpha>\frac {d\eta}{2-2\eta}, \alpha\in (0,1]$,
	\begin{align*}
	%&|I_2+I_3|=0,\\
	|I_2|&\le \int_0^t \Big|\sum_{i\in \mathbb N^+}\E\Big[\int_{\mathcal O} |Im (Q^{\frac 12}e_i)|^2(|u^0(s)|^2-\frac {\partial f_{\epsilon}}{\partial x}(|u^{\epsilon}(s)|^2)|u^{\epsilon}(s)|^4)dx\Big]\Big|ds\\
	%&\le T|\lambda|\sum_{k}\| Q^{\frac 12}e_i\|_{L^{\infty}}^2\sup_{t\in[0,T]}\E\Big[  \|u^0(t)-u^{\epsilon}(t)\|\|u^{\epsilon}(t)+u^0(t)\| \Big]\\
	%&\quad + C T|\lambda| \sum_{k}\| Q^{\frac 12}e_i\|_{L^{\infty}}^2 {\Big(1+\sup_{t\in [0,T]}\E\Big[\|u^{\epsilon}(t)\|^2\Big]\Big)}\\
	%&\quad +C T|\lambda| \sum_{k}\| Q^{\frac 12}e_i\|_{L^{\infty}}^2 \sup_{t\in [0,T]}\E\Big[\int_{\mathcal O} \epsilon^{\eta}|u^{\epsilon}|^{2+2\eta}\frac {(\epsilon|u^{\epsilon}|)^{1-\eta}}{1+\epsilon |u^{\epsilon}|^2} dx\Big]\\
	&\le CT|\lambda|\sum_{i\in \mathbb N^+}\| Q^{\frac 12}e_i\|_{L^{\infty}}^2\sup_{t\in[0,T]}\E\Big[  \|u^0(t)-u^{\epsilon}(t)\|\|u^{\epsilon}(t)+u^0(t)\| \Big]\\
	&\quad + C T|\lambda| \sum_{i\in \mathbb N^+}\| Q^{\frac 12}e_i\|_{L^{\infty}}^2\epsilon^{\delta} {\Big(1+\sup_{t\in [0,T]}\E\Big[\|u^{\epsilon}(t)\|^{2+2\delta}\Big]\Big)}\\
	&\quad +
	C T|\lambda| \sum_{i\in \mathbb N^+}\| Q^{\frac 12}e_i\|_{L^{\infty}}^2 \epsilon^{\eta}\sup_{t\in [0,T]}\E\Big[\|u^{\epsilon}(t)\|_{L^{2-2\eta}}^{2-2\eta}\Big].
	\end{align*}
	Combining the above estimates with \eqref{wei-sob}, Lemma \ref{lm-con} and Corollary \ref{cor-entropy}, we complete the proof.
\end{proof}

\section{Structure-preserving regularized  Lie--Trotter type splitting method}
In this section, we propose the regularized Lie--Trotter type splitting methods based on Eq. \eqref{Reg-SlogS},  and investigate their strong convergence rates and convergence rates of the regularized entropy. %By means of the properties of  \eqref{Reg-SlogS}, we are able to obtain several properties, including the strong .

%We consider the Lie--Trotter type splitting methods based on the following decompositions of  \eqref{Reg-SlogS}.
When considering Eq. \eqref{Reg-SlogS} driven by additive noise,  i.e., $\widetilde g=1$, we apply the following decomposition
\begin{align*}
dv&= \bi \Delta vdt+dW(t),\;
v=v_0,\\
dw&= \bi \lambda f_{\epsilon}(|w|^2)wdt,\;
w=w_0,
\end{align*}
whose flow satisfies
\begin{align*}
&v_{A,\mathcal F_0}(v_0,t)=\Phi_{A,\mathcal F_0}^t(v_0)=e^{\bi \Delta t}v_0+\int_0^te^{\bi \Delta (t-s)}dW(s),\\ &w_f({w_0},t)=\Phi_f^t(w_0)=w_0e^{\bi \lambda f_{\epsilon}(|w_0|^2)t}
\end{align*} 
with $e^{\bi\Delta t}$ being the $C_0$-group generated by $\bi \Delta$, where $v_0,w_0$ are $\mathcal F_0$-measurable. 	
Denote the time step size by $\tau$ such that $t_k=k\tau,$ $k=0,1,\cdots,N,$ and $T=N\tau.$ 
The splitting scheme for the additive noise case is defined by 
\begin{align}\label{spl-add}
u^{\epsilon}_{k+1}=\Phi^{\tau}_{k}(u^{\epsilon}_k)=\Phi_{A,\mathcal F_{t_k}}^{\tau}(\Phi_f^{\tau}(u^{\epsilon}_k)), \; k\ge0, \; u^{\epsilon}_0=u_0,\; \tau>0.
\end{align}

For the multiplicative noise case, i.e., $\widetilde g(x)=\mathbf i g(|x|^2)x$, we proposed two different splitting strategies. 
When  $W(t)$ is $\mathbb H$-valued, we use 
\begin{align*}
dv&= \bi \Delta vdt+\bi g(|v|^2)v\star dW(t),\;
v=v_0,\\
dw&= \bi \lambda f_{\epsilon}(|w|^2)wdt,\;
w=w_0.
\end{align*} 
The flow of the first subsystem is
\begin{align*}
&v_{M,\mathcal F_0}(v_0,t)=\Phi_{M,\mathcal F_0}^t(v_0)\\
=&e^{\bi \Delta t}v_0+\int_0^t e^{\bi \Delta (t-s)}\Big(-\frac 12\sum_{i\in\mathbb N^+}|Q^{\frac 12}e_i|^2\Big(|g(|\Phi_{M,\mathcal F_0}^s(v_0)|^2)|^2 \Phi_{M,\mathcal F_0}^s (v_0)\Big)\\
&-\bi \sum_{i \in \mathbb N^+}Im(Q^{\frac 12}e_i) Q^{\frac 12}e_i g(|\Phi_{M,\mathcal F_0}^s (v_0)|^2)g'(|\Phi_{M,\mathcal F_0}^s (v_0)|^2) |\Phi_{M,\mathcal F_0}^s (v_0)|^2
\Phi_{M,\mathcal F_0}^s(v_0)\Big)ds\\
&+\bi \int_0^te^{\bi \Delta (t-s)} g(|\Phi_{M,\mathcal F_0}^s (v_0)|^2)\Phi_{M,\mathcal F_0}^s(v_0) d W(s),
\end{align*}
and 
the splitting scheme is formulated as 
\begin{align}\label{spl-mul-com}
u^{\epsilon}_{k+1}=\Phi^{\tau}_k(u^{\epsilon}_k)= \Phi_{M,\mathcal F_{t_k}}^{\tau}(\Phi_f^{\tau}(u^{\epsilon}_k)), \; k\ge0, \; u^{\epsilon}_0=u_0,\; \tau>0.
\end{align}
For the purpose of performing the numerical method, one may use the exponential Euler method to further approximate $ \Phi_{M,\mathcal F_{t_k}}^{\tau}$ and denote 
\begin{align}\label{spl-mul-com1}
u^{\epsilon}_{k+1}=\Phi^{\tau}_k(u^{\epsilon}_k)=\widetilde \Phi_{M,\mathcal F_{t_k}}^{\tau}(\Phi_f^{\tau}(u^{\epsilon}_k)), \; k\ge0, u^{\epsilon}_0=u_0,\; \tau>0,
\end{align}
where 
\begin{align*}
\widetilde \Phi_{M,\mathcal F_{t_k}}^{\tau}(v_0):&=e^{\bi \Delta \tau}v_0+
\int_0^\tau e^{\bi \Delta \tau }\Big(-\frac 12\sum_{i \in\mathbb N^+}|Q^{\frac 12}e_i|^2\Big(|g(|v_0|^2)|^2v_0\Big)\\
&\quad-\bi \sum_{i\in \mathbb N^+}Im(Q^{\frac 12}e_i) Q^{\frac 12}e_i {g(|v_0|^2)}g'(|v_0|^2) |v_0|^2
v_0\Big)ds\\
&\quad+\bi \int_0^{\tau}e^{\bi \Delta (\tau-s)} g(|v_0|^2)v_0 d\widetilde W_k(s),
\end{align*} 
where $\widetilde W_k(s):= W(t_k+s)-W(t_k)$ and $v_0$ is $\mathcal F_{t_k}$-measurable.
One could also use other discretization, like $\int_0^{\tau}e^{\bi \Delta \tau} g(|v_0|^2)v_0 d\widetilde W_k(s)$, for the stochastic integral.

For \eqref{Reg-SlogS} driven by conservative multiplicative noise, i.e., ${W(t)}$ is $L^2(\mathcal O;\mathbb R)$-valued, we have another splitting strategy, that is,
\begin{align*}
dv&= \bi \Delta vdt,\;
v=v_0,\\
dw&= \bi \lambda f_{\epsilon}(|w|^2)wdt+\bi g(|w|^2)w\star dW(t),\;
w=w_0.
\end{align*}
We also remark that in this case, the stochastic integral $\bi g(|w|^2)w\star dW(t)$ is the Stratonovich integral.
Denote the flow of the corresponding subsystems by
\begin{align*}
v_D(v_0,t)&=\Phi_D^t(v_0)=e^{\bi \Delta t}v_0,\\
w_{f+g,\mathcal F_0}({w_0},t)&=\Phi_{f+g,\mathcal F_0}^t(w_0)=w_0e^{\bi \lambda f_{\epsilon}(|w_0|^2)t+{\bi}g(|w_0|^2)W(t)}
\end{align*}
{ with $\mathcal F_0$-measurable $v_0,w_0.$}
Then the splitting scheme is defined by
\begin{align}\label{spl-mul-rea}
u^{\epsilon}_{k+1}=\Phi^{\tau}_k(u^{\epsilon}_k)= \Phi_D^{\tau}( \Phi_{f+g,\mathcal F_{t_k}}^{\tau}(u^{\epsilon}_k)), \; k\ge0, \; u^{\epsilon}_0=u_0,\; \tau>0.
\end{align}
Following the above strategies, one may construct different kinds of splitting methods by changing the order of the splitting or making a composition of different subsystems. 

\subsection{Structure-preserving properties  of regularized  Lie--Trotter type splitting method}

By making use of the properties of subsystems in the splitting methods, we present the following structure-preserving properties for the proposed numerical methods. The proof of the following propositions is omitted since it is similar to that of the exact solution of \eqref{Reg-SlogS}.

\begin{prop}
	Let Assumption \ref{main-reg-fun} and the condition of Theorem \ref{mild-general} hold. Assume in addition that $\{W(t)\}_{t\ge 0}$ is $\mathbb H$-valued, $\widetilde g=1$ or that  $\{W(t)\}_{t\ge 0}$ is $L^2(\mathcal O;\mathbb R)$-valued, $\widetilde g(x)=\mathbf ig(|x|^2)x.$ Then the phase flows of the splitting methods \eqref{spl-add}, \eqref{spl-mul-com} and \eqref{spl-mul-rea} preserve the symplectic structure, i.e.,
	$$\bar \omega_{k+1}:=\int_{\mathcal O} {\rm d}P_{k+1}^\epsilon \wedge {\rm d}Q_{k+1}^\epsilon dx=\int_{\mathcal O} {\rm d}P_{k}^\epsilon \wedge {\rm d}Q_{k}^\epsilon dx=
	\bar \omega_k, \; a.s.,$$
	with $P_k^\epsilon$ (resp., $Q_k^\epsilon$) being the real (resp., imaginary) part of the solution $u^{\epsilon}_k.$
\end{prop}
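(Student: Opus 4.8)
The plan is to exploit the fact that each elementary flow appearing in the splitting is itself the phase flow of a (stochastic) Hamiltonian system, hence symplectic, and that a composition of symplectic maps is symplectic. Concretely, for each of the three schemes \eqref{spl-add}, \eqref{spl-mul-com}, \eqref{spl-mul-rea} I would write $\Phi^\tau_k$ as a composition of two maps and check that each factor preserves $\bar\omega$.

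First I would handle the elementary flows. The flow $\Phi_f^\tau$ of $\dot w=\bi\lambda f_\epsilon(|w|^2)w$ is, in $(P,Q)$ variables, the phase flow of the (pointwise in $x$) Hamiltonian system with Hamiltonian density $-\tfrac\lambda2\int_0^{P^2+Q^2}f_\epsilon(s)\,ds$; its symplecticity is exactly the deterministic part of the argument used for \eqref{Reg-SlogS} in the preceding proposition, and it relies only on the smoothness of $f_\epsilon$ granted by Assumption \ref{main-reg-fun}. The linear flow $e^{\bi\Delta\tau}$ (appearing as $\Phi_D^\tau$, and as the linear part of $\Phi_{A,\mathcal F_{t_k}}^\tau$ and of $\Phi_{M,\mathcal F_{t_k}}^\tau$) is the time-$\tau$ flow of the linear Hamiltonian PDE $\dot v=\bi\Delta v$ with Hamiltonian $\tfrac12\|\nabla v\|^2$, hence preserves $\int_{\mathcal O}{\rm d}P\wedge{\rm d}Q\,dx$. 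For \eqref{spl-add} the map $\Phi_{A,\mathcal F_{t_k}}^\tau$ is affine, equal to $e^{\bi\Delta\tau}$ plus the stochastic convolution, which is a shift independent of the initial datum; thus its Fréchet derivative in the initial datum is the constant symplectic operator $e^{\bi\Delta\tau}$, so $\Phi^\tau_k=\Phi_{A,\mathcal F_{t_k}}^\tau\circ\Phi_f^\tau$ is symplectic a.s.

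Next, for \eqref{spl-mul-com} the flow $\Phi_{M,\mathcal F_{t_k}}^\tau$ is the flow over $[t_k,t_k+\tau]$ of $dv=\bi\Delta v\,dt+\bi g(|v|^2)v\star dW$, which — by the computation recalled just before the preceding proposition — is in $(P,Q)$ variables a stochastic Hamiltonian system with drift Hamiltonian $\tfrac12\|\nabla v\|^2$ and Stratonovich Hamiltonian $H_{Sto}=\int_{\mathcal O}\int_0^{P^2+Q^2}g(s)\,ds\,dx$; repeating the argument of that proposition (apply the Itô/Stratonovich calculus to ${\rm d}\bar\omega(t)$ and check the increment vanishes) shows $\Phi_{M,\mathcal F_{t_k}}^\tau$ preserves $\bar\omega$ a.s., and composing with the symplectic $\Phi_f^\tau$ gives the claim. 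For \eqref{spl-mul-rea}, $\Phi_{f+g,\mathcal F_{t_k}}^\tau(w_0)=w_0e^{\bi\lambda f_\epsilon(|w_0|^2)\tau+\bi g(|w_0|^2)W(\tau)}$ is the flow of $dw=\bi\lambda f_\epsilon(|w|^2)w\,dt+\bi g(|w|^2)w\star dW$ with $L^2(\mathcal O;\mathbb R)$-valued $W$, again a stochastic Hamiltonian system (drift Hamiltonian $-\tfrac\lambda2 F_\epsilon$, Stratonovich Hamiltonian the analogue of $H_{Sto}$), so it preserves $\bar\omega$; since this flow acts pointwise by multiplication with a unit-modulus factor depending only on $|w_0|^2$ and the noise, symplecticity can alternatively be verified by differentiating the explicit formula directly. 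Then $\Phi^\tau_k=\Phi_D^\tau\circ\Phi_{f+g,\mathcal F_{t_k}}^\tau$ is symplectic. Finally I would invoke the composition property: if $\Phi,\Psi$ preserve $\bar\omega$ a.s., then $(\Phi\circ\Psi)^*\bar\omega=\Psi^*\Phi^*\bar\omega=\Psi^*\bar\omega=\bar\omega$, since ${\rm d}(\Phi\circ\Psi)={\rm d}\Phi(\Psi(\cdot))\circ{\rm d}\Psi$ is a pathwise chain rule in the initial datum and the $\mathcal F_{t_k}$-measurability of the second factor is harmless. Iterating over $k$ yields $\bar\omega_{k+1}=\bar\omega_k$ a.s.

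The main obstacle is the rigorous justification of the change-of-variables for the 2-form $\bar\omega$ in the infinite-dimensional stochastic setting: one must show that each subsystem flow is Fréchet-differentiable in the initial datum with derivative a bounded (symplectic) operator on $\mathbb H$, and that this differentiation commutes with the stochastic integral, so that ${\rm d}\bar\omega(t)$ is well-defined and the Stratonovich calculus applies. This is precisely the framework of \cite{CH16}, and it goes through here using the regularity of the subsystem solutions together with the boundedness/smoothness of $f_\epsilon$ and $g$ from Assumptions \ref{main-reg-fun} and \ref{main-as}; once it is in place, the symplecticity of each factor and the composition argument are routine, which is why the authors omit the details.
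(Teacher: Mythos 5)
Your proposal is correct and follows essentially the route the paper intends: the proof is omitted there precisely because, as you argue, each subsystem flow ($\Phi_f^\tau$, $e^{\bi\Delta\tau}$ resp.\ the affine $\Phi_{A,\mathcal F_{t_k}}^\tau$, $\Phi_{M,\mathcal F_{t_k}}^\tau$, $\Phi_{f+g,\mathcal F_{t_k}}^\tau$) is itself the phase flow of a (stochastic) Hamiltonian system in the $(P^\epsilon,Q^\epsilon)$ variables, so symplecticity follows from the same argument used for the exact solution of Eq.~\eqref{Reg-SlogS} via \cite{CH16}, and the composition of symplectic maps is symplectic. Your identification of the relevant Hamiltonians and the pathwise chain-rule/composition step match the paper's (omitted) reasoning, so nothing further is needed.
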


\begin{prop}
	Let Assumption \ref{main-reg-fun} and the condition of Theorem \ref{mild-general} hold. Assume that $\widetilde g(x)=\mathbf i x$ or $\widetilde g(x)=1$.Then the splitting methods \eqref{spl-add}, \eqref{spl-mul-com} and \eqref{spl-mul-rea} preserve the evolution law of the mass of the exact solution ${u^\epsilon}$, i.e.,
	\begin{align*}
	\E\Big[M(u_{k+1}^{\epsilon})\Big]=\E\Big[M(u_{k}^{\epsilon})\Big]+\tau \sum_{i\in \mathbb N^+}\|Q^{\frac 12}e_i\|^2 \chi_{\{\widetilde g=1\}},
	\end{align*}
	where $\chi_{\{\widetilde g=1\}}=1$ for the additive noise case and $\chi_{\{\widetilde g=1\}}=0$ for the multiplicative noise case.
\end{prop}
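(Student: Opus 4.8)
The plan is to exploit the composition structure of the three schemes: one step of each has the form $u_{k+1}^\epsilon=\Phi^\tau_{2,\mathcal F_{t_k}}\big(\Phi^\tau_{1,\mathcal F_{t_k}}(u_k^\epsilon)\big)$, so it suffices to track the mass $M(u)=\|u\|^2$ through each subflow separately and then compose. Two elementary facts carry almost all the weight: (i) $e^{\bi\Delta t}$ is a unitary $C_0$-group on $\mathbb H$, so $\|e^{\bi\Delta t}v\|=\|v\|$, which makes $\Phi_D^t$ mass-preserving pathwise and lets me pull the semigroup out of stochastic convolutions; (ii) whenever a subflow acts by pointwise multiplication by an exponential with purely imaginary exponent, $|u(x)|$ is unchanged at every $x$ and hence $M$ is conserved pathwise. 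Fact (ii) applies to $\Phi_f^t(w_0)=w_0e^{\bi\lambda f_\epsilon(|w_0|^2)t}$ (since $f_\epsilon$ is real-valued) and, in the conservative-noise case with $g\equiv1$ and $W$ being $L^2(\mathcal O;\mathbb R)$-valued, to $\Phi_{f+g,\mathcal F_0}^t(w_0)=w_0e^{\bi\lambda f_\epsilon(|w_0|^2)t+\bi W(t)}$ as well. Together these immediately dispose of \eqref{spl-mul-rea}, which composes the two mass-preserving maps $\Phi_{f+g,\mathcal F_{t_k}}^\tau$ and $\Phi_D^\tau$, and they reduce \eqref{spl-add} and \eqref{spl-mul-com} to analyzing only the noise-carrying subflow, since $\Phi_f^\tau$ is mass-preserving in both.

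For the additive scheme \eqref{spl-add}, write $w:=\Phi_f^\tau(u_k^\epsilon)$, which is $\mathcal F_{t_k}$-measurable since $\Phi_f^\tau$ is deterministic, and expand
\[
M\big(\Phi_{A,\mathcal F_{t_k}}^\tau(w)\big)=\|e^{\bi\Delta\tau}w\|^2+2\<e^{\bi\Delta\tau}w,R_k\>+\|R_k\|^2,\qquad R_k:=\int_0^\tau e^{\bi\Delta(\tau-s)}\,d\widetilde W_k(s),
\]
with $\widetilde W_k(s)=W(t_k+s)-W(t_k)$. The first term equals $\|w\|^2$ by unitarity; taking expectations, the cross term vanishes because $R_k$ is built from increments of $W$ after $t_k$, so $\E[R_k\mid\mathcal F_{t_k}]=0$ and $\<\cdot,\cdot\>$ is $\R$-linear in $R_k$ while $e^{\bi\Delta\tau}w$ is $\mathcal F_{t_k}$-measurable; and the It\^o isometry together with unitarity of $e^{\bi\Delta(\tau-s)}$ gives $\E\|R_k\|^2=\int_0^\tau\sum_{i\in\mathbb N^+}\|e^{\bi\Delta(\tau-s)}Q^{\frac12}e_i\|^2\,ds=\tau\sum_{i\in\mathbb N^+}\|Q^{\frac12}e_i\|^2$. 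Hence $\E[M(u_{k+1}^\epsilon)]=\E[M(u_k^\epsilon)]+\tau\sum_{i\in\mathbb N^+}\|Q^{\frac12}e_i\|^2$, i.e. the claim with $\chi_{\{\widetilde g=1\}}=1$.

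For the complex multiplicative scheme \eqref{spl-mul-com} with $\widetilde g(x)=\bi x$ (so $g\equiv1$ and the $g'$-correction term disappears), the flow $\Phi_{M,\mathcal F_{t_k}}^\tau$ solves $dv=\bi\Delta v\,dt-\tfrac12\sum_{i\in\mathbb N^+}|Q^{\frac12}e_i|^2v\,dt+\bi v\,dW(t)$. I would apply It\^o's formula to $\|v\|^2$, use $\<v,\bi\Delta v\>=0$ (integration by parts makes $\int_{\mathcal O}v\overline{\bi\Delta v}\,dx$ purely imaginary), and check that the Stratonovich-to-It\^o drift $-\sum_{i\in\mathbb N^+}\int_{\mathcal O}|Q^{\frac12}e_i|^2|v|^2\,dx$ is cancelled exactly by the It\^o correction $\sum_{i\in\mathbb N^+}\|\bi vQ^{\frac12}e_i\|^2$, leaving $d\|v\|^2$ equal to a pure martingale differential. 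Taking expectations — the local martingale is genuine by the $\epsilon$-uniform moment bounds of Lemma \ref{lm-con} — yields $\E[M(\Phi_{M,\mathcal F_{t_k}}^\tau(w))]=\E[M(w)]$, and composing with the mass-preserving $\Phi_f^\tau$ gives $\E[M(u_{k+1}^\epsilon)]=\E[M(u_k^\epsilon)]$, the claim with $\chi_{\{\widetilde g=1\}}=0$. The exponential-Euler variant \eqref{spl-mul-com1} is handled the same way, as $\widetilde\Phi_{M,\mathcal F_{t_k}}^\tau$ carries the same mean-zero stochastic convolution and a frozen-coefficient drift cancelling the It\^o correction identically.

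I expect the only genuinely delicate points — rather than routine bookkeeping — to be the exact cancellation of the It\^o correction against the Stratonovich drift in \eqref{spl-mul-com}, which is precisely where linearity ($g\equiv1$) is essential and for general $g$ the statement fails, and the justification that the relevant stochastic integrals are true martingales so that expectations may be taken, which is supplied by the a priori estimates already in hand. Everything else reduces to unitarity of $e^{\bi\Delta t}$, unimodularity of the nonlinear phases, and the It\^o isometry.
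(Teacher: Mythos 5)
Your proof is correct and follows essentially the route the paper intends (the paper omits the proof, noting it mirrors the It\^o-formula computation behind Proposition \ref{prop-evo}): pathwise unimodularity of the phase flows $\Phi_f^\tau$ and $\Phi_{f+g,\mathcal F_{t_k}}^\tau$ together with unitarity of $e^{\bi\Delta t}$, plus the It\^o isometry and mean-zero cross term for \eqref{spl-add} and the exact cancellation of the It\^o correction against the $-\frac 12\sum_{i\in\mathbb N^+}|Q^{\frac 12}e_i|^2v$ drift for \eqref{spl-mul-com}. One caution: your closing aside that the exponential Euler variant \eqref{spl-mul-com1} is handled the same way with the cancellation holding identically is not accurate, since there the frozen drift enters inside a squared norm and leaves a nonnegative $O(\tau^2)$ defect of size $\frac{\tau^2}{4}\big\|\sum_{i\in\mathbb N^+}|Q^{\frac 12}e_i|^2v_0\big\|^2$; but as \eqref{spl-mul-com1} is not among the schemes covered by the proposition, this does not affect the claim.
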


\subsection{Strong convergence rate of Lie--Trotter type splitting method for RSlogS equation}

In the following, we give the strong convergence analysis of the proposed splitting methods.
%One can easily follow the below procedures to give the numerical analysis for other different splitting methods. 
\iffalse
Let us consider the case that $f_{\epsilon}(|v|^2)=\log(\frac{|v|^2+\epsilon}{1+\epsilon |v|^2}).$
\fi

\begin{prop}\label{tm-con-add}
	Let Assumption \ref{main-reg-fun} and the condition of Theorem \ref{mild-general} hold. Let $\widetilde g=1.$ Assume in addition that $f_{\epsilon}$ satisfies 
	\begin{align}\label{con-f}
	|f_{\epsilon}(|x|^2)x-f_{\epsilon}(|y|^2)y|&\le C(1+|\log(\epsilon)|)|x-y|.
	\end{align}
	Then the numerical solution of the splitting method \eqref{spl-add} is strongly convergent to the exact one of Eq. \eqref{Reg-SlogS}. Moreover, for $p\ge2,$  there exists $C(Q,T,\lambda,p,u_0)>0$ such that
	\begin{align*}
	\sup_{k\le N}\|u^{\epsilon}_k-u^{\epsilon}(t_k)\|_{L^{p}(\Omega;\mathbb H)}&\le C(Q,T,\lambda,p,u_0)(1+|\log(\epsilon)|)\tau^{\frac 12}.
	\end{align*}
\end{prop}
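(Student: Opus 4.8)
The plan is to estimate the one-step error of the scheme \eqref{spl-add} and then propagate it via a discrete Gronwall argument, exploiting that the flow $\Phi_f^\tau$ is an isometry on $\mathbb H$ (it multiplies pointwise by $e^{\mathbf i \lambda f_\epsilon(|w_0|^2)\tau}$, which has modulus one) and that $\Phi_{A,\mathcal F_{t_k}}^\tau$ is a contraction on $\mathbb H$ (since $e^{\mathbf i\Delta t}$ is unitary). First I would write the local error by inserting the exact flow: for $u^\epsilon(t_{k+1})=\Phi_{A,\mathcal F_{t_k}}^\tau\big(\widehat w(\tau)\big)$ where $\widehat w$ solves $d\widehat w = \mathbf i\lambda f_\epsilon(|\widehat w|^2)\widehat w\,dt + e^{-\mathbf i\Delta\cdot}dW$-type corrections (more precisely, use the mild formulation of \eqref{Reg-SlogS} and the splitting identity), and compare with $u^\epsilon_{k+1}=\Phi_{A,\mathcal F_{t_k}}^\tau(\Phi_f^\tau(u^\epsilon_k))$. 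Subtracting and using the contractivity of $\Phi_{A,\mathcal F_{t_k}}^\tau$, one reduces to controlling (i) the propagated global error $\|u^\epsilon_k-u^\epsilon(t_k)\|_{L^p(\Omega;\mathbb H)}$ through $\Phi_f^\tau$, and (ii) a genuine local truncation error coming from the non-commutativity of the two subflows and the frozen-coefficient approximation in $\Phi_f$.

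For step (i), the key point is that $\Phi_f^\tau$ is \emph{not} globally Lipschitz in a naive sense, but condition \eqref{con-f}, namely $|f_\epsilon(|x|^2)x-f_\epsilon(|y|^2)y|\le C(1+|\log\epsilon|)|x-y|$, together with (A2) from Assumption \ref{main-reg-fun}, gives exactly the Lipschitz-type bound needed: one writes $\Phi_f^\tau(x)-\Phi_f^\tau(y) = (x-y) + \mathbf i\lambda\tau\big(f_\epsilon(|x|^2)x - f_\epsilon(|y|^2)y\big) + \mathcal O(\tau^2)(1+|\log\epsilon|)^2|x-y|$ by Taylor-expanding the exponentials, so that $\|\Phi_f^\tau(x)-\Phi_f^\tau(y)\|\le \big(1+C\tau(1+|\log\epsilon|)\big)\|x-y\|$. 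This yields the recursion $e_{k+1}\le (1+C\tau(1+|\log\epsilon|))e_k + \delta_k$ with $e_k:=\|u^\epsilon_k-u^\epsilon(t_k)\|_{L^p(\Omega;\mathbb H)}$ and $\delta_k$ the local error; discrete Gronwall then produces the factor $e^{CT(1+|\log\epsilon|)}$, which is a constant for fixed $\epsilon$ — though note this is a \emph{worse} $\epsilon$-dependence than the stated $(1+|\log\epsilon|)\tau^{1/2}$, so the local error bound must be sharp enough to absorb this; I expect one actually splits $e_k$ into a ``smooth part'' handled by the unitary group and a ``nonlinear part'' where the $(1+|\log\epsilon|)$ enters only linearly, not exponentially, by keeping the nonlinearity on the correct side of the estimate (this is the delicate bookkeeping point).

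For step (ii), the local error $\delta_k$ is estimated using the a priori bounds from Lemma \ref{lm-con} (uniform-in-$\epsilon$ moment bounds on $\|u^\epsilon(t)\|_{\mathbb H^1}$ and the $\tfrac12$-Hölder regularity $\E\|u^\epsilon(t)-u^\epsilon(s)\|^p\le C|t-s|^{p/2}$). Writing the local error as an integral over $[t_k,t_{k+1}]$ of terms of the form $\big(f_\epsilon(|u^\epsilon(s)|^2)u^\epsilon(s) - f_\epsilon(|u^\epsilon(t_k)|^2)u^\epsilon(t_k)\big)$, of the commutator $[\mathbf i\Delta,\ \cdot\,]$ applied to the nonlinearity (handled on $\mathbb H$ without using $\Delta$ explicitly, by the mild/Duhamel form so only $e^{\mathbf i\Delta\cdot}$ unitarity is used), and of the stochastic convolution increment, one gets $\|\delta_k\|_{L^p(\Omega;\mathbb H)}\le C(1+|\log\epsilon|)\tau^{3/2}$: the $\tau^{3/2}$ is one factor $\tau$ from the integration and one factor $\tau^{1/2}$ from the Hölder-in-time regularity (for the stochastic term, a factor $\tau^{1/2}$ from the Itô isometry / Burkholder--Davis--Gundy), and the $(1+|\log\epsilon|)$ from (A1)/\eqref{con-f}. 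Summing $N=T/\tau$ such local errors through the near-identity recursion gives the global rate $C(1+|\log\epsilon|)\tau^{1/2}$. The main obstacle is exactly the interplay flagged above: ensuring that the $(1+|\log\epsilon|)$ factor from the unbounded-in-$\epsilon$ nonlinearity appears only linearly in the final bound and is not amplified to $e^{C(1+|\log\epsilon|)}$ by the Gronwall step; this forces a careful decomposition where the Gronwall constant depends only on the genuinely Lipschitz (one-sided, via (A2)) part of the dynamics while the $|\log\epsilon|$-dependence is confined to the additive local-error term.
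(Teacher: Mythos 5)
Your overall architecture (local error of size $O((1+|\log\epsilon|)\tau^{3/2})$ propagated through stable flows, summed over $N=T/\tau$ steps) is the right one and matches the paper's, and your local-error estimate in step (ii) is essentially the paper's estimate of the terms $I_1,I_2$ there. But step (i) contains a genuine gap that you yourself flag and then leave unresolved, and it is precisely the point on which the stated rate hinges. By Taylor-expanding the phase factor and invoking \eqref{con-f}, you obtain the one-step stability bound $\|\Phi_f^{\tau}(x)-\Phi_f^{\tau}(y)\|\le (1+C\tau(1+|\log\epsilon|))\|x-y\|$, whose Gronwall/iteration constant is $e^{CT(1+|\log\epsilon|)}\sim \epsilon^{-CT}$. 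That is not absorbable by any sharpening of the local error: the claimed bound has only a linear factor $(1+|\log\epsilon|)$, so with your stability estimate the proof as proposed does not close. Saying "one actually splits $e_k$ into a smooth part and a nonlinear part\dots (this is the delicate bookkeeping point)" names the difficulty but supplies no mechanism.

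The missing idea, which is how the paper proceeds, is that stability of \emph{all} the flows involved (the exact flow $\Psi_k^t$ of \eqref{Reg-SlogS}, the phase flow $\Phi_f^t$, and hence the numerical flow, since $e^{\mathbf i\Delta t}$ is unitary and the noise is additive) holds with an $\epsilon$-\emph{independent} Lipschitz constant, because the logarithmic drift enters as $\mathbf i\lambda$ times a real-valued function times $u$. Differentiating $\|\Phi_f^t(v)-\Phi_f^t(w)\|^2$ (or $\|\Psi_k^t(v)-\Psi_k^t(w)\|^2$) in $t$ and taking the real inner product, the nonlinearity contributes only through
\begin{align*}
Im\big[(f_{\epsilon}(|x_1|^2)x_1-f_{\epsilon}(|x_2|^2)x_2)(\bar x_1-\bar x_2)\big],
\end{align*}
which condition (A2) bounds by $C|x_1-x_2|^2$ with $C$ independent of $\epsilon$; one never needs to bound the full difference $f_{\epsilon}(|x|^2)x-f_{\epsilon}(|y|^2)y$ in the stability step. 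Consequently $\|\Phi_f^t(v)-\Phi_f^t(w)\|\le e^{Ct}\|v-w\|$ and likewise for $\Psi_k^t$, so in the telescoping (Lady Windermere's fan) decomposition the local errors are propagated with the $\epsilon$-free constant $e^{CT}$, and the factor $(1+|\log\epsilon|)$ from \eqref{con-f} and (A1) appears only additively, in the local truncation terms (your $\delta_k$, the paper's $I_1$ and $I_2$, estimated via $\|w-e^{\bi\Delta t}w\|\le C\sqrt t\,\|w\|_{\mathbb H^1}$, the $\mathbb H^1$ a priori bounds, and the stochastic convolution). With this replacement of your step (i), the rest of your outline goes through and reproduces the stated bound $C(1+|\log\epsilon|)\tau^{1/2}$.
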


\begin{proof}
	%Let us starting from the local error estimate. 
	
	For convenience, we illustrate the procedures in the case that $p=2.$ For general $p\ge 2$, the proof is similar. 
	Assume that $v\in \mathbb H^1$ is $\mathcal F_{t_k}$-measurable and has any finite moment. 
	Denote the exact flow of \eqref{Reg-SlogS} on $[t_k,t_{k+1}]$ by $\Psi_k^t, t\in [0,\tau]$  and the numerical flow by $\Phi^t_k,  t\in [0,\tau].$ Then the equation of $\Psi^t_k(v)$ and $\Phi^t_k(v)=\Phi_{A,\mathcal F_{t_k}}^{t}(\Phi_f^{t}(v))$ on a small interval $[0,\tau]$ can be rewritten as
	\begin{align*}
	d\Psi_k^t(v)&=\bi \Delta \Psi_k^t(v)dt+ d\widetilde W_k(t)+\bi \lambda f_{\epsilon}(|\Psi_k^t(v)|^2)\Psi_k^t(v)dt,\\
	d\Phi_k^t(v)&=\bi \Delta \Phi_k^t(v)dt+ d\widetilde W_k(t)+\bi \lambda e^{\bi \Delta t} [f_{\epsilon}(|\Phi_f^t(v)|^2)\Phi_f^t(v)]dt.
	\end{align*}
	Let  $\varepsilon^t_k(v)=\Psi^t_k(v)-\Phi^t_k(v).$ Then it holds that  
	\begin{align*}
	d\varepsilon^t_k(v)={\bi \Delta \varepsilon^t_k(v)dt}+\bi \lambda  \Big(f_{\epsilon}(|{\Psi^t_k}(v)|^2){\Psi^t_k}(v)-e^{\bi \Delta t} [f_{\epsilon}(|\Phi_f^t(v)|^2)\Phi_f^t(v)]\Big)dt.
	\end{align*}
	Applying the chain rule, integration by parts and  (A2), we achieve that 
	\begin{align*}
	\frac {d}{dt} \|\varepsilon^t_k(v)\|^2
	%&=2\<{\varepsilon^t_k}(v),\bi \lambda (f_{\epsilon}(|\Psi^t_k(v)|^2){\Psi^t_k}(v)-f_{\epsilon}(|{\Phi^t_k}(v)|^2){\Phi^t_k}(v))\>\\
	%&\quad+2\<{\varepsilon^t_k(v),\bi \lambda (f_{\epsilon}(|\Phi^t_k}(v)|^2)\Phi^t_k(v)-e^{\bi \Delta t} [f_{\epsilon}(|\Phi_f^t(v)|^2)\Phi_f^t(v)])\>\\
	%&\le 2C\|{\color{red}\varepsilon^t}(v)\|^2+\|{\color{red}\varepsilon^t}(v)\|\Big\|f_{\epsilon}(|{\color{blue}\Phi^t_k}(v)|^2){\color{blue}\Phi^t_k}(v)-e^{\bi \Delta t} [f_{\epsilon}(|\Phi_f^t(v)|^2)\Phi_f^t(v)]\Big\|\\
	&\le 2C\|{\varepsilon^t_k}(v)\|^2
	+2C\|{\varepsilon^t_k}(v)\|\Big\|f_{\epsilon}(|{\Phi^t_k}(v)|^2){\Phi^t_k}(v)-f_{\epsilon}(|\Phi_f^t(v)|^2)\Phi_f^t(v)\Big\|\\
	&\quad+2C\|\varepsilon_k^t(v)\|\|(I-e^{\bi \Delta t})[f_{\epsilon}(|\Phi_f^t(v)|^2)\Phi_f^t(v)]\|\\
	&=: 2C\|{\varepsilon_k^t}(v)\|^2+2C\|{\varepsilon_k^t}(v)\|I_1+2C\|{\varepsilon_k^t}(v)\|I_2.
	\end{align*}
	This leads to $\frac d{dt}\|{\varepsilon_k^t}(v)\|\le C\Big(\|{\varepsilon^t_k}(v)\|+I_1+I_2\Big).$
	The property \eqref{con-f} of $f_{\epsilon},$  together with the definition of $\Phi_k$ and  $\|w-e^{\bi \Delta t}w\|\le C\sqrt{t}\|w\|_{\mathbb H^1}$, yields that  
	\begin{align*}
	I_1
	%&\le C(1+|\log(\epsilon)|)\|{\color{blue}\Phi^t_k}(v)-\Phi_f^t(v)\| \\
	&\le C(1+|\log(\epsilon)|)\|\Phi_{A,\mathcal F_{t_k}}^t{(\Phi_f^t(v))}-\Phi_f^t(v)\|\\
	&\le C(|\log(\epsilon)|+1)\sqrt{t}\|v\|_{\mathbb H^1}+C(1+|\log(\epsilon)|)\Big\|\int_0^te^{\bi \Delta (t-s)}d\widetilde W_k(s)\Big\|.
	\end{align*}
	In the last inequality,  we use the following estimate
	\begin{align*}
	\|\Phi_f^s(v)\|_{\mathbb H^1}^2&\le \|v\|_{\mathbb H^1}^2+\|v\|^2+4\lambda^2s^2\Big\|v\frac {\partial f}{\partial x}(|v|^2) Re(\bar v\nabla v)\Big\|^2\\
	&\le \|v\|_{\mathbb H^1}^2+\|v\|^2+4C|\lambda|^2s^2\|\nabla v\|^2\le \|v\|_{\mathbb H^1}^2+C s^2\|v\|_{\mathbb H^1}^2,\; s\in [0,\tau].
	\end{align*} 
	For the term $I_2$, similar arguments lead to
	\begin{align*}
	I_2%&\le C\Big\|(I-e^{\bi \Delta t})[f_{\epsilon}(|\Phi_f^t(v)|^2)\Phi_f^t(v)]\Big\|\\
	&\le C\sqrt{t}\Big\|f_{\epsilon}(|\Phi_f^t(v)|^2)\Phi_f^t(v)\Big\|_{\mathbb H^1}\le C\sqrt{t}\Big((1+|\log(\epsilon)|)(1+Ct)\|v\|_{\mathbb H^1}\Big).
	\end{align*}
	Combining the above estimates on $I_1$ and $I_2$, we have that 
	\begin{align*}
	\|{\varepsilon_k^t}(v)\|\le C(|\log(\epsilon)|+1)\int_0^t\left(\sqrt{s}\|v\|_{\mathbb H^1}+\left\|\int_0^se^{\bi \Delta (s-r)}d\widetilde W_k(r)\right\|\right)ds.
	\end{align*} 
	This, together with the Burkholder inequality, implies that 
	\begin{align*}
	\|{\varepsilon_k^t}(v)\|_{L^p(\Omega;\mathbb H)}&\le t^{\frac 32}C(1+|\log(\epsilon)|)(1+\|v\|_{L^{p}(\Omega;\mathbb H^1)}).
	\end{align*}
	
	Next we show the stability of ${\Phi^t_k}$ to get the global error 
	estimate.
	Direct calculations, together with the chain rule and $(A 2)$, yield that 
	\begin{align*}
	&\|\Phi_f^t(v)-\Phi_f^t(w)\|^2\\
	=&\|v-w\|^2+2\int_0^t\<\Phi_f^s(v)-\Phi_f^s(w),f_{\epsilon}(|\Phi_f^s(v)|^2)\Phi_f^s(v)- f_{\epsilon}(|\Phi_f^s(w)|^2)\Phi_f^s(w)\>ds\\
	\le& \|v-w\|^2+4\int_0^t\|\Phi_f^s(v)-\Phi_f^s(w)\|^2ds.
	\end{align*}
	Therefore, it holds that ${\|\Phi_f^t(v)-\Phi_f^t(w)\|^2}\le \exp(Ct)\|v-w\|^2.$
	This, together with the fact that $\widetilde g=1,$ implies
	$
	\|{\Phi^t_k}(v)-\Phi^{t}_k(w)\|\le \exp(Ct)\|v-w\|$.
	The similar arguments lead to the stability estimate of ${\Phi^t_k}$,
	\begin{align*}
	\|{\|{\Phi^t_k}(v)\|^2\|}_{L^p(\Omega)}&\le \exp(Ct)(1+\|v\|_{L^{2p}(\Omega;\mathbb H)}^2),\\
	\|\|{\Phi^t_k}(v)\|_{\mathbb H^1}^2\|_{L^p(\Omega)} &\le \exp(Ct)(1+\|v\|_{L^{2p}(\Omega;\mathbb H^1)}^2).
	\end{align*}
	Taking $p$-moment and using stability estimates of $\Phi_k$ and ${\Psi_k}$, and decomposing the global error as 
	\begin{align*}
	\|u^{\epsilon}(t_N)-u^{\epsilon}_N\|
	&\le \sum_{k=0}^{N-1}\Big\|\prod_{j={k+1}}^{N-1}\Psi^{\tau}_j \Big(\Psi_k^{\tau}-\Phi_k^{\tau}\Big)\Big(\prod_{j=0}^{k-1}\Phi^{\tau}_j (u_0)\Big)\Big\|\\
	&\le C\sum_{k=0}^{N-1}\tau^{\frac 32}(1+|\log(\epsilon)|)(1+\|\prod_{j=0}^{k-1}\Phi^{\tau}_j (u_0) \|_{\mathbb H^1}),
	\end{align*}
	we complete the proof.
\end{proof}

The method of proving the strong convergence in additive noise case can not be directly used to the multiplicative noise case due to the existence of diffusion terms.  Below we present the convergence analysis of  the proposed methods in the multiplicative noise case. 
% To deal with the multiplicative noise, we need to choose proper way to handle the diffusion term.

\begin{prop}\label{tm-con-mul-com}
	Let Assumption \ref{main-reg-fun} and the condition of Theorem \ref{mild-general} hold. Let $\widetilde g(x)=\mathbf i g(|x|^2)x.$ Assume in addition that $f_{\epsilon}$ satisfies \eqref{con-f}. 
	Then the numerical solution of \eqref{spl-mul-com} is strongly convergent to the exact solution of Eq. \eqref{Reg-SlogS}. Moreover, for $p\ge2,$  there exists $C(Q,T,\lambda,p,u_0,\widetilde g)>0$ such that
	\begin{align*}
	\sup_{k\le N}\|u^{\epsilon}_k-u^{\epsilon}(t_k)\|_{L^{p}(\Omega;\mathbb H)}&\le C(Q,T,\lambda,p,u_0,\widetilde g)(1+|\log(\epsilon)|)\tau^{\frac 12}.
	\end{align*}
\end{prop}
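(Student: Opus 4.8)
The plan is to follow the telescoping scheme of Proposition~\ref{tm-con-add}, the only genuinely new difficulty being that the numerical flow $\Phi_k^t(v):=\Phi_{M,\mathcal F_{t_k}}^t(\Phi_f^t(v))$ does not solve a closed stochastic evolution equation (since $\Phi_{M,\mathcal F_{t_k}}^t$ is nonlinear in its initial datum), so the pathwise chain rule used there must be replaced by estimates on mild formulations together with It\^o's formula and the Burkholder inequality. As preparation I would record: $\Phi_f^t$ preserves $|\cdot|$ pointwise, is Lipschitz on $\mathbb H$ with constant $e^{Ct}$ by (A2), satisfies $\|\Phi_f^t(v)\|_{\mathbb H^1}\le(1+Ct)\|v\|_{\mathbb H^1}$ (using (A5) and $|x|^2/(\epsilon+|x|^2)\le1$, so that no factor $|\log(\epsilon)|$ enters), and obeys $\|\Phi_f^t(v)-v\|_{L^p(\Omega;\mathbb H)}\le C(1+|\log(\epsilon)|)t\|v\|_{L^p(\Omega;\mathbb H)}$ by (A1); the $M$-flow $\Phi_{M,\mathcal F_{t_k}}^t$ is $L^p(\Omega;\mathbb H)$-Lipschitz in its initial datum with constant $e^{Ct}$, propagates $L^p(\Omega;\mathbb H^1)$ bounds, and obeys $\|\Phi_{M,\mathcal F_{t_k}}^t(v)-v\|_{L^p(\Omega;\mathbb H)}\le C\sqrt t(1+\|v\|_{L^p(\Omega;\mathbb H^1)})$ (from $\|(e^{\bi\Delta t}-I)v\|\le C\sqrt t\|v\|_{\mathbb H^1}$, the linear growth of the drift $D(\cdot)$ appearing in the mild form of $\Phi_{M,\mathcal F_{t_k}}^t$, and Burkholder), all with $\epsilon$-independent constants by Assumption~\ref{main-as}. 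Composing gives that $\Phi_k^t$ is $L^p(\Omega;\mathbb H)$-Lipschitz and $L^p(\Omega;\mathbb H^1)$-bounded with $\epsilon$-uniform constants, and that $\|\Phi_k^s(v)-\Phi_f^s(v)\|_{L^p(\Omega;\mathbb H)}\le C\sqrt s(1+\|v\|_{L^p(\Omega;\mathbb H^1)})$.

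For the one-step error I would fix an $\mathcal F_{t_k}$-measurable $v$ with finite moments in $\mathbb H^1$ and compare the mild formulations of the exact flow $\Psi_k^t(v)$ of Eq.~\eqref{Reg-SlogS} on $[t_k,t_{k+1}]$ with that of $\Phi_k^t(v)$. Inserting $w=\Phi_f^t(v)$ into the mild form of $\Phi_{M,\mathcal F_{t_k}}^t$, using $\Phi_f^t(v)=v+\bi\lambda\int_0^t f_\epsilon(|v|^2)\Phi_f^s(v)\,ds$, and replacing each $\Phi_{M,\mathcal F_{t_k}}^s(\Phi_f^t(v))$ by $\Phi_k^s(v)=\Phi_{M,\mathcal F_{t_k}}^s(\Phi_f^s(v))$ at the cost of a remainder $\mathcal R_k^t$ with $\|\mathcal R_k^t\|_{L^p(\Omega;\mathbb H)}\le C(1+|\log(\epsilon)|)\tau^{3/2}(1+\|v\|_{L^p(\Omega;\mathbb H^1)})$ — here $\|\Phi_f^t(v)-\Phi_f^s(v)\|\le C(1+|\log(\epsilon)|)\tau\|v\|$ is absorbed through the Lipschitz property of $\Phi_{M,\mathcal F_{t_k}}^s$, the drift part of $\mathcal R_k^t$ being $O(\tau^2)$ and the stochastic part $O(\tau^{3/2})$ by Burkholder — one obtains
\begin{align*}
\varepsilon_k^t(v)
&:=\Psi_k^t(v)-\Phi_k^t(v)\\
&=\bi\lambda\int_0^t\Big(e^{\bi\Delta(t-s)}f_\epsilon(|\Psi_k^s(v)|^2)\Psi_k^s(v)-e^{\bi\Delta t}f_\epsilon(|v|^2)\Phi_f^s(v)\Big)ds\\
&\quad+\int_0^t e^{\bi\Delta(t-s)}\Big(D(\Psi_k^s(v))-D(\Phi_k^s(v))\Big)ds\\
&\quad+\bi\int_0^t e^{\bi\Delta(t-s)}\Big(g(|\Psi_k^s(v)|^2)\Psi_k^s(v)-g(|\Phi_k^s(v)|^2)\Phi_k^s(v)\Big)d\widetilde W_k(s)-\mathcal R_k^t,
\end{align*}
with $\varepsilon_k^0(v)=0$. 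The decisive point is that the first integrand, decomposed as $e^{\bi\Delta(t-s)}\big([f_\epsilon(|\Psi_k^s|^2)\Psi_k^s-f_\epsilon(|\Phi_k^s|^2)\Phi_k^s]+[f_\epsilon(|\Phi_k^s|^2)\Phi_k^s-f_\epsilon(|v|^2)v]+f_\epsilon(|v|^2)[v-\Phi_f^s(v)]\big)+(e^{\bi\Delta(t-s)}-e^{\bi\Delta t})f_\epsilon(|v|^2)\Phi_f^s(v)$, has $L^p(\Omega;\mathbb H)$-norm at most $C(1+|\log(\epsilon)|)\|\varepsilon_k^s(v)\|_{L^p(\Omega;\mathbb H)}+C(1+|\log(\epsilon)|)\sqrt s\,(1+\|v\|_{L^p(\Omega;\mathbb H^1)})$: the first bracket by \eqref{con-f}; the second by \eqref{con-f} and $\|\Phi_k^s(v)-v\|_{L^p(\Omega;\mathbb H)}\le C\sqrt s(1+\|v\|_{L^p(\Omega;\mathbb H^1)})$; the third by (A1) and $\|v-\Phi_f^s(v)\|\le C(1+|\log(\epsilon)|)s\|v\|$; and the semigroup difference by $\|(e^{\bi\Delta(t-s)}-e^{\bi\Delta t})w\|\le C\sqrt s\|w\|_{\mathbb H^1}$ together with $\|f_\epsilon(|v|^2)\Phi_f^s(v)\|_{\mathbb H^1}\le C(1+|\log(\epsilon)|)\|v\|_{\mathbb H^1}$ (again by (A5) and $|x|^2/(\epsilon+|x|^2)\le1$). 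Applying It\^o's formula to $\|\varepsilon_k^t(v)\|^2$ (then standard moment arguments for $p\ge2$), using that $\bi\Delta$ is skew-adjoint, that $D(\cdot)$ and $z\mapsto g(|z|^2)z$ are Lipschitz on $\mathbb H$ by Assumption~\ref{main-as}, Burkholder for the martingale term, and Gronwall's inequality, yields the local estimate
\begin{align*}
\sup_{t\in[0,\tau]}\|\varepsilon_k^t(v)\|_{L^p(\Omega;\mathbb H)}\le C(Q,T,\lambda,p,u_0,\widetilde g)(1+|\log(\epsilon)|)\tau^{3/2}\big(1+\|v\|_{L^p(\Omega;\mathbb H^1)}\big).
\end{align*}

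The global error then follows exactly as in Proposition~\ref{tm-con-add}: with $\Psi_j^\tau$ the exact flow on $[t_j,t_{j+1}]$ one writes $u^\epsilon(t_N)-u_N^\epsilon=\sum_{k=0}^{N-1}\big(\prod_{j=k+1}^{N-1}\Psi_j^\tau\big)\big(\Psi_k^\tau-\Phi_k^\tau\big)\big(\prod_{j=0}^{k-1}\Phi_j^\tau(u_0)\big)$, bounds $\prod_{j=k+1}^{N-1}\Psi_j^\tau$ by its $L^p(\Omega;\mathbb H)$-Lipschitz constant $e^{CT}$ (stability of the exact solution, which rests on (A2) and \eqref{con-g1}, \eqref{con-g}), uses the $\epsilon$-uniform $L^p(\Omega;\mathbb H^1)$-bound of the $\mathcal F_{t_k}$-measurable iterate $\prod_{j=0}^{k-1}\Phi_j^\tau(u_0)$, inserts the one-step estimate, and sums the $N=T/\tau$ contributions of size $C(1+|\log(\epsilon)|)\tau^{3/2}$, which gives the claimed bound (taking $\sup_{k\le N}$ amounts to doing this for every $N'\le N$). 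I expect the main obstacle to be precisely this local error step: unlike the additive case the error equation is genuinely stochastic and $\Phi_k^t$ has no closed-form SDE, so one must verify carefully that the Lie--Trotter structure still forces the drift discrepancy at time $s$ to be $O(\sqrt s)$ rather than $O(1)$ — which is what upgrades the naive $O(\tau)$ one-step error to the $O(\tau^{3/2})$ needed for a $\tau^{1/2}$ global rate — while keeping every constant $\epsilon$-uniform apart from the explicit factor $1+|\log(\epsilon)|$ produced by (A1) and \eqref{con-f}.
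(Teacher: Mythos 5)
Your proposal is essentially correct, but it assembles the estimate differently from the paper. The paper does \emph{not} telescope with a common-datum local error: it fixes $v=u^{\epsilon}(t_k)$, $w=u^{\epsilon}_k$, introduces the auxiliary flow $\widetilde\Phi_k^s(w)=\Phi_{M,\mathcal F_{t_k}}^{s}(\Phi_f^{t}(w))$, applies It\^o's formula to $\|\Psi_k^t(v)-\Phi_k^t(w)\|^2$ and, after invoking (A2), \eqref{con-g1}, \eqref{con-g}, reduces everything to two mean-square discrepancies: $\E\|\widetilde\Phi_k^s(w)-\Phi_k^s(w)\|^2\le C\E\|\Phi_f^t(w)-\Phi_f^s(w)\|^2\le C\tau^2(1+|\log\epsilon|^2)$ and $\E\|\Phi_{M,\mathcal F_{t_k}}^s(\Phi_f^s(w))-\Phi_f^s(w)\|^2\le C\tau(1+\E\|\Phi_f^s(w)\|_{\HH^1}^2)$. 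These enter \emph{squared} inside the Gronwall recursion, so the paper only needs an $O(\sqrt\tau)$-in-mean-square bound on the one-step diffusion increment, and the global rate comes from iterating $\E\|e_{k+1}\|^2\le e^{C\tau}(\E\|e_k\|^2+C\tau^2(1+|\log\epsilon|^2))$; stability and local error are never separated. Your route (Lady Windermere fan with $L^p(\Omega;\HH)$-Lipschitz stability of the exact flow and a common-datum local error of size $(1+|\log\epsilon|)\tau^{3/2}$) is the direct transplant of Proposition~\ref{tm-con-add}; it buys a modular argument and $L^p$ estimates for all $p$ at once, but it demands a strictly stronger local statement than the paper ever proves.

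That stronger local statement is where you must be careful, and it is the one place where your sketch, read literally, does not yet close. After you bound the drift discrepancy by $C(1+|\log\epsilon|)\big(\|\varepsilon_k^s(v)\|+\sqrt{s}\,(1+\|v\|_{\HH^1})\big)$ and the diffusion discrepancy by $C\|\varepsilon_k^s(v)\|$, applying It\^o's formula to $\|\varepsilon_k^t(v)\|^2$, taking expectation and estimating the cross term by Young's inequality $2\<\varepsilon,\delta\>\le\|\varepsilon\|^2+\|\delta\|^2$ gives only $\E\|\varepsilon_k^\tau(v)\|^2\le C(1+|\log\epsilon|)^2\tau^2$, i.e.\ a local error of order $\tau$ — and then the telescoping sum is $N\cdot O(\tau)=O(1)$, no convergence. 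To actually reach $\tau^{3/2}$ you must either keep the cross term as a product, $\E\<\varepsilon,\delta\>\le (\E\|\varepsilon\|^2)^{1/2}(\E\|\delta\|^2)^{1/2}$, so that $m(t)=\E\|\varepsilon_k^t(v)\|^2$ satisfies $m'\le Cm+C(1+|\log\epsilon|)\sqrt{m}\,\sqrt{t}$ and hence $\sqrt{m(\tau)}\le C(1+|\log\epsilon|)\tau^{3/2}$, or work with the mild form and absorb the Burkholder feedback $C(\int_0^t\|\varepsilon_k^s(v)\|_{L^p(\Omega;\HH)}^2ds)^{1/2}\le C\sqrt\tau\sup_{s\le\tau}\|\varepsilon_k^s(v)\|_{L^p(\Omega;\HH)}$ for $\tau$ small. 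With that correction (and the harmless bookkeeping that terms like $(1+|\log\epsilon|)^2 s$ from (A1) are admissible once $(1+|\log\epsilon|)\sqrt\tau$ is assumed bounded), your argument goes through and yields the stated rate; the remaining ingredients you list — Lipschitzness of $z\mapsto g(|z|^2)z$ and of $\Phi_{M,\mathcal F_{t_k}}^s$ in its datum, the $\epsilon$-uniform $\HH^1$ bounds of the iterates, the remainder $\mathcal R_k^t=O((1+|\log\epsilon|)\tau^{3/2})$ — all match estimates the paper itself establishes or uses.
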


\begin{proof}
	We show the details of the proof in the case that $p=2.$  
	Assume that $v,w \in \mathbb H^1$ are $\mathcal F_{t_k}$-measurable and have any finite moment. 
	Denote the exact flow of \eqref{Reg-SlogS} on $[t_k,t_{k+1}]$ by $\Psi_k^s,$ $s\in[0,\tau]$.
	Fix $t \in [0,\tau],$ define the auxiliary flow  $\widetilde \Phi^s_k(w)=\Phi_{M,\mathcal F_{t_k}}^{s}(\Phi_f^{t}(w))$, $s\in [0,\tau].$
Then the equations of ${\Psi^t_k}(v)$ and $ \Phi^{t}_k (w)$ and can be rewritten as 
	
	\begin{align*}
	d{\Psi^t_k}(v)=&\bi \Delta {\Psi^t_k}(v)dt+\bi g(|{\Psi^t_k}(v)|^2){\Psi^t_k}(v)d\widetilde W_k(t)\\
	&-\bi g'(|{\Psi^t_k}(v)|^2)g(|{\Psi^t_k}(v)|^2)|{\Psi^t_k}(v)|^2{\Psi^t_k}(v)\sum_{i\in \mathbb N^+} Im( Q^{\frac 12}e_i) Q^{\frac 12}e_idt\\
	&-\frac 12(g(|{\Psi^t_k}(v)|^2))^2{\Psi^t_k}(v)\sum_{i\in \mathbb N^+}|Q^{\frac 12}e_i|^2dt+\bi \lambda f_{\epsilon}(|{\Psi^t_k}(v)|^2){\Psi^t_k}(v)dt,\\
	d{\Phi^t_k}(w)=&\bi \Delta {\Phi^t_k}(w)dt+\bi g(|{\widetilde \Phi^t_k}(w)|^2){\widetilde  \Phi^t_k}(w)d \widetilde W_k(t)\\
	&-\bi g'(|{\widetilde  \Phi^t_k}(w)|^2)g(|{\widetilde  \Phi^t_k}(w)|^2)|{\widetilde  \Phi^t_k}(w)|^2{\widetilde  \Phi^t_k}(w)\sum_{i\in \mathbb N^+}  Im( Q^{\frac 12}e_i) Q^{\frac 12}e_idt\\
	&-\frac 12(g(|{\widetilde  \Phi^t_k}(w)|^2))^2{\widetilde  \Phi^t_k}(w)\sum_{i\in \mathbb N^+}|Q^{\frac 12}e_i|^2dt+\bi \lambda e^{\bi \Delta t} [f_{\epsilon}(|\Phi_f^t(w)|^2)\Phi_f^t(w)]dt.
	\end{align*}	
	\iffalse
	Denote  $\varepsilon_{k}^t(u_0)=\Psi_{k}^t(u_0)-\Phi_k^t(u_0), k\le N.$
	Then we have that $\varepsilon^{t_m+1}(u_0)=\Psi^{\tau}(\Psi^{t_m}(u_0))-\Phi^{\tau}((\Phi^\tau)^m(u_0)).$
	\fi
	Using the It\^o formula, letting $v=u^{\epsilon}(t_k)$ and $w=u^{\epsilon}_k$, taking expectation and using the conditions \eqref{con-g1} and \eqref{con-g} on $g$, as well as  the condition (A2) on $f_{\epsilon}$, we have that 
	\iffalse
	{\small 
		\begin{align*}
		&\|\Psi^{t}_k(v)-\Phi^{t}_k(w)\|^2\\
		=&\|v-w\|^2+2\int_0^t\Big\<\Psi^{s}_k(v)- \Phi^{s}_k(w),\bi \lambda\left(f_{\epsilon}(|\Psi^s_k(v)|^2)\Psi^s_k(v)-{e^{\bi \Delta s}} f_{\epsilon}(|\Phi_f^s(w)|^2)\Phi_f^s(w)\right) \Big\>ds\\
		&-\int_0^t\Big\<\Psi^{s}_k(v)-\Phi^{s}_k(w),\Big((g(|\Psi^s_k(v)|^2))^2\Psi^s_k(v)-(g(|\Phi^s_k(w)|^2))^2\Phi^s_k(w)\Big)\sum_{i}|Q^{\frac 12}e_i|^2\Big\>ds\\
		&+2\int_0^t\Big\<\Psi^{s}_k(v)-\Phi^{s}_k(w),-\bi \Big(g'(|\Psi^s_k(v)|^2)g(|\Psi^s_k(v)|^2)|\Psi^s_k(v)|^2\Psi^s_k(v) \\
		&\quad-g'(|\Phi^s_k(w)|^2)g(|\Phi^s_k(w)|^2)|\Phi^s_k(w)|^2\Phi_k^s(w)\Big)\sum_{i} Im (Q^{\frac 12}e_i) Q^{\frac 12}e_i\Big\>ds\\
		&+2\int_0^t\Big\<\Psi^{s}_k(v)-\Phi^{s}_k(w),\bi(g(|\Psi^s_k(v)|^2)\Psi^s_k(v)-g(|\Phi^s_k(w)|^2)\Phi^s_k(w))d\widetilde W_{k}(s)\Big\>\\
		&+\int_0^t \sum_{i}\| (g(|\Psi^s_k(v)|^2)\Psi^s_k(v)-g(|\Phi^s_k(w)|^2)\Phi^s_k(w) )Q^{\frac 12}e_i\|^2ds.
		\end{align*}
	}
	\fi

		\begin{align*}
		&\E\Big[\|\Psi^{t}_k(v)-\Phi^{t}_k(w)\|^2\Big]\\
		%&\le \E\Big[\|v-w\|^2\Big]+C\int_0^t\E\Big[\|\Psi^{s}(v)-\Phi^{s}(w)\|^2\Big]ds\\
		%&+C\int_0^t\E\Big[\Big\|f_{\epsilon}(|\Phi^s(v)|^2)\Phi^s(v)-e^{\bi \Delta s}f_{\epsilon}(|\Phi_f^s(v)|^2)\Phi_f^s(v)\Big\|^2\Big]ds\\
		%&+C\int_0^t \sum_{k\in\mathbb N^+}\<|Q^{\frac 12}e_i|^2 (g(|\Psi^s(v)|^2))-g(|\Phi^s(w)|^2))\Psi^s(v),(g(|\Psi^s(v)|^2))-g(|\Phi^s(w)|^2))\Phi^s(w) \>ds\\
		%&+C\int_0^t\Big|\Big\<\Psi^{s}(v)-\Phi^{s}(w),-\bi \Big(g'(|\Psi^s(v)|^2)g(|\Psi^s(v)|^2)|\Psi^s(v)|^2\Psi^s(v) \\
		%&\quad-g'(|\Phi^s(w)|^2)g(|\Phi^s(w)|^2)|\Phi^s(w)|^2\Phi^s(w)\Big)\sum_{i} Im(Q^{\frac 12}e_i) Q^{\frac 12}e_i\Big\>\Big|ds\\
		\le &\E\Big[\|v-w\|^2\Big]+
		C\int_0^t\E\Big[\|\Psi^{s}_k(v)-\Phi^{s}_k(w)\|^2\Big]ds+C\int_0^t\E\Big[\|\widetilde \Phi^{s}_{k}(w)-\Phi^{s}_k(w)\|^2\Big]ds\\
		&+{C\int_0^t \E\Big[\Big\|f_{\epsilon}(|\Phi^s_k(w)|^2)\Phi^s_k(w)-e^{\bi \Delta s}f_{\epsilon}(|\Phi_f^s(w)|^2)\Phi_f^s(w)\Big\|^2\Big]ds.} 
		\end{align*}		
Using the conditions \eqref{con-g1} and \eqref{con-g} on $g$ and It\^o's formula, as well as $\|w-e^{\bi \Delta t}w\|\le C\sqrt{t}\|w\|_{\mathbb H^1}$, we arrive at that 
\begin{align*}
\E\Big[\|\widetilde \Phi^{s}_{k}(w)-\Phi^{s}_k(w)\|^2\Big]
&\le C\E\Big[\|\Phi_f^{t}(w)-\Phi_f^{s}(w)\|^2\Big]\\
&\le C \tau^2\sup_{s\in[0,t]}(1+|\log(\epsilon)|^2)\E\Big[\|\Phi^s_f(w)\|^2\Big].
\end{align*}
It suffices to estimate 
	\begin{align*}
	&\Phi^{s}_{M,\mathcal F_{t_k}} (\Phi_f^s(w))-\Phi_f^s(w)
	=
	(e^{\bi \Delta t}-I)\Phi_f^s(w)
	+\int_{0}^s e^{\bi \Delta{ (s-r)}} II_{mod,k}^r(\Phi_f^s(w))dr\\
	&\qquad \qquad\qquad\qquad \qquad\quad +\int_0^s e^{\bi \Delta( s-r)} g(|\Phi_{M,\mathcal F_k}^r (\Phi_f^s(w))|^2)\Phi_{M,\mathcal F_k}^r(\Phi_f^s(w)) d\widetilde W_k(r)
	\end{align*}
	for $s\in [0,\tau],$
where 
\begin{align*}
&II_{mod,k}^r(v_0)
:= -\frac 12\sum_{i\in\mathbb N^+}|Q^{\frac 12}e_i|^2|g(|\Phi_{M,\mathcal F_k}^r(v_0)|^2)|^2 \Phi_{M,\mathcal F_k}^r (v_0)\\
&-\bi \sum_{i\in \mathbb N^+}Im(Q^{\frac 12}e_i) Q^{\frac 12}e_ig(|\Phi_{M,\mathcal F_k}^r (v_0)|^2)g'(|\Phi_{M,\mathcal F_0}^r (v_0)|^2) |\Phi_{M,\mathcal F_k}^r (v_0)|^2
\Phi_{M,\mathcal F_k}^r(v_0),
\end{align*}
for any $\mathcal F_{k}$-measurable function $v_0$.
	By taking second moment and using the continuity estimate of $e^{\bi \Delta t}$ and the boundedness of the flow $\Phi_{M,\mathcal F_k}$, i.e. $\E[\|\Phi_{M,\mathcal F_{t_k}}^r(v_0)\|_{\mathbb H^1}^2]\le C\|v_0\|_{\mathbb H^1}^2$,
	we get 
	\begin{align*}
	&\E\Big[\| \Phi^{s}_{M,\mathcal F_{t_k}} (\Phi_f^s(w))-\Phi_f^s(w)\|^2\Big]\\
	\le& 2C\tau \E\Big[\|\Phi_f^s(w)\|_{{\mathbb H^1}}^2\Big]+2\E\Big[\int_0^s \|II_{mod,k}^s({\Phi_f^s(w)})\|^2dr\Big]\\
	&+2\E\Big[\|\int_0^s e^{\bi \Delta (s-r)}g(|\Phi_{M,\mathcal F_k}^r (\Phi_f^s(w))|^2)\Phi_{M,\mathcal F_k}^r(\Phi_f^s(w))  d{\widetilde W_k}(r)\|^2\Big]\\
	\le& C\tau \Big(1+\E[\|\Phi_f^{s} (w)\|_{\mathbb H^1}^2\Big]\Big).
	\end{align*}
	Using the Gronwall inequality and similar arguments in proving the estimates of $I_1$ and $I_2$ in the proof of  Proposition \ref{tm-con-add}, we have 
	{\small
		\begin{align*}
		&\E\Big[\|\Psi^{t}_k(v)-\Phi^{t}_k(w)\|^2\Big]\le e^{C\tau}\Big(\E\Big[\|v-w\|^2\Big]+C\tau^2(1+|\log(\epsilon)|^2)\sup_{s\in[0,t]}\E\Big[\|\Phi^s_f(w)\|_{\HH^1}^2\Big]\Big).
		\end{align*}
		B}ased on the definition of $\Phi^t_f$ and ${\Phi^t_k}$, the stability estimate in $\mathbb H^1$
	\begin{align*}
	\|\Phi^t_f(w)\|_{\HH^1}^2&\le \|w\|_{\mathbb H^1}^2+C{ t^2}\|w\|_{\mathbb H^1}^2,\\
	\E[\|\Phi^t_k(w)\|_{\HH^1}^2]&\le \exp(Ct) \E\Big[\|w\|_{\mathbb H^1}^2+C { t^2}\|w\|_{\mathbb H^1}^2\Big]
	\end{align*}
	can be shown. 
	Taking $t=\tau,$ and using the a priori estimate of $\|\Phi_k^s(w)\|_{\HH^1}$ and $\|\Phi^s_f(w)\|_{\HH^1}$, we have 
	\begin{align*}
	&\E\Big[\|u^{\epsilon}(t_{k+1})-u_{k+1}^{\epsilon}\|^2\Big]
	\le e^{C\tau}\Big(\E\Big[\|u^{\epsilon}(t_{k})-(u_{k}^{\epsilon})\|^2\Big]+C\tau^2(1+|\log(\epsilon)|^2)\Big).
	\end{align*}
	By repeating the above procedures, we conclude 
	\begin{align*}
	\E\Big[\|u^{\epsilon}(t_{k+1})-u_{k+1}^{\epsilon}\|^2\Big]
	&\le C(Q,T,\lambda,u_0,{p,\tilde g})(1+|\log(\epsilon)|^2)\tau,
	\end{align*}
	which completes the proof. 
\end{proof}

\begin{prop}\label{tm-con-com}
	Under the condition of Proposition  \ref{tm-con-mul-com},
	the splitting scheme \eqref{spl-mul-com1} is 
	strongly convergent. Moreover, for $p\ge2,$  there exists $C(Q,T,\lambda,p,u_0,\widetilde g)>0$ such that
	\begin{align*}
	\sup_{k\le N}\|u^{\epsilon}_k-u^{\epsilon}(t_k)\|_{L^{p}(\Omega;\mathbb H)}&\le C(Q,T,\lambda,p,u_0,\widetilde g)(1+|\log(\epsilon))|)\tau^{\frac 12}.
	\end{align*}
\end{prop}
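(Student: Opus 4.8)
The plan is to regard scheme \eqref{spl-mul-com1} as a perturbation of scheme \eqref{spl-mul-com} and to reduce to Proposition \ref{tm-con-mul-com}. Write $\widehat u^{\epsilon}_k$ for the solution of \eqref{spl-mul-com} (built from the exact subflow $\Phi_{M,\mathcal F_{t_k}}^{\tau}$) and $u^{\epsilon}_k$ for that of \eqref{spl-mul-com1} (built from the exponential Euler approximation $\widetilde\Phi_{M,\mathcal F_{t_k}}^{\tau}$), both with $u^{\epsilon}_0=u_0$. Since
\[
\|u^{\epsilon}_k-u^{\epsilon}(t_k)\|_{L^p(\Omega;\HH)}\le \|u^{\epsilon}_k-\widehat u^{\epsilon}_k\|_{L^p(\Omega;\HH)}+\|\widehat u^{\epsilon}_k-u^{\epsilon}(t_k)\|_{L^p(\Omega;\HH)}
\]
and the last term is at most $C(1+|\log(\epsilon)|)\tau^{1/2}$ by Proposition \ref{tm-con-mul-com}, it suffices to estimate the error between the two numerical solutions.

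The core ingredient is a one-step consistency estimate between $\widetilde\Phi_{M,\mathcal F_{t_k}}^{\tau}$ and $\Phi_{M,\mathcal F_{t_k}}^{\tau}$. For an $\mathcal F_{t_k}$-measurable $v\in\HH^1$ with finite moments, subtracting the two mild representations gives
\begin{align*}
\widetilde\Phi_{M,\mathcal F_{t_k}}^{\tau}(v)-\Phi_{M,\mathcal F_{t_k}}^{\tau}(v)
&=\int_0^{\tau}\big(e^{\bi\Delta\tau}II_{mod,k}^{0}(v)-e^{\bi\Delta(\tau-s)}II_{mod,k}^{s}(v)\big)ds\\
&\quad+\bi\int_0^{\tau}e^{\bi\Delta(\tau-s)}\big(g(|v|^2)v-g(|\Phi_{M,\mathcal F_{t_k}}^{s}(v)|^2)\Phi_{M,\mathcal F_{t_k}}^{s}(v)\big)d\widetilde W_k(s)\\
&=:\mathcal R_{drift}(v)+\mathcal R_{mart}(v).
\end{align*}
Using the $1/2$-order time continuity of the subflow, $\E\|\Phi_{M,\mathcal F_{t_k}}^{s}(v)-v\|^2\le Cs(1+\E\|v\|_{\HH^1}^2)$ (which holds because the $M$-subsystem does not involve $f_\epsilon$ and is proved exactly as the H\"older estimate in Lemma \ref{lm-con}), the smoothing bound $\|(I-e^{\bi\Delta s})w\|\le C\sqrt s\,\|w\|_{\HH^1}$, the $\HH^1$-stability $\E\|\Phi_{M,\mathcal F_{t_k}}^{r}(v)\|_{\HH^1}^2\le C\E\|v\|_{\HH^1}^2$, the bounds on $g,g',g''$ and conditions \eqref{con-g1}, \eqref{con-g} in Assumption \ref{main-as}, and the summability assumptions on $Q^{\frac12}e_i$ from Theorem \ref{mild-general}, one obtains, for $p\ge 2$,
\[
\|\mathcal R_{drift}(v)\|_{L^p(\Omega;\HH)}\le C\tau^{3/2}\big(1+\|v\|_{L^p(\Omega;\HH^1)}\big),\qquad \|\mathcal R_{mart}(v)\|_{L^p(\Omega;\HH)}\le C\tau\big(1+\|v\|_{L^p(\Omega;\HH^1)}\big),
\]
the second via the It\^o isometry (Burkholder inequality for $p>2$) and $\int_0^{\tau}s\,ds=O(\tau^2)$; crucially $\mathcal R_{mart}(v)$ is an It\^o integral against $\widetilde W_k$, hence $\E[\mathcal R_{mart}(v)\mid\mathcal F_{t_k}]=0$. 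One also uses the stability estimates already available for \eqref{spl-mul-com}: $\E\|\Phi_{M,\mathcal F_{t_k}}^{\tau}(v)-\Phi_{M,\mathcal F_{t_k}}^{\tau}(w)\|^2\le e^{C\tau}\E\|v-w\|^2$ and $\|\Phi_f^{\tau}(v)-\Phi_f^{\tau}(w)\|\le e^{C\tau}\|v-w\|$ (Proposition \ref{tm-con-add}), together with the uniform bound $\sup_{k\le N}\E\|\widehat u^{\epsilon}_k\|_{\HH^1}^p\le C$, which is $\epsilon$-independent thanks to $(A5)$ and was implicit in Proposition \ref{tm-con-mul-com}.

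Now set $e_k:=\E\|u^{\epsilon}_k-\widehat u^{\epsilon}_k\|^2$ (we display $p=2$; general $p$ is analogous with Burkholder's inequality). Inserting $\pm\Phi_{M,\mathcal F_{t_k}}^{\tau}(\Phi_f^{\tau}(u^{\epsilon}_k))$ we write $u^{\epsilon}_{k+1}-\widehat u^{\epsilon}_{k+1}=A_k+B_k$, where $B_k=\mathcal R_{drift}(v_k)+\mathcal R_{mart}(v_k)$ with $v_k:=\Phi_f^{\tau}(u^{\epsilon}_k)$ being $\mathcal F_{t_k}$-measurable, and $A_k=\Phi_{M,\mathcal F_{t_k}}^{\tau}(\Phi_f^{\tau}(u^{\epsilon}_k))-\Phi_{M,\mathcal F_{t_k}}^{\tau}(\Phi_f^{\tau}(\widehat u^{\epsilon}_k))$ satisfies $\E\|A_k\|^2\le e^{C\tau}e_k$. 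Expanding $e_{k+1}=\E\|A_k\|^2+2\E\langle A_k,B_k\rangle+\E\|B_k\|^2$, the only delicate term is $\E\langle A_k,\mathcal R_{mart}(v_k)\rangle$. Decomposing $A_k=e^{\bi\Delta\tau}a_k+r_k$ along the mild formulation of the linearized $M$-subsystem, where $a_k:=\Phi_f^{\tau}(u^{\epsilon}_k)-\Phi_f^{\tau}(\widehat u^{\epsilon}_k)$ is $\mathcal F_{t_k}$-measurable and (the drift and diffusion of that subsystem being Lipschitz in the state) $\|r_k\|_{L^2(\Omega;\HH)}\le C\sqrt\tau\,e^{C\tau}\sqrt{e_k}$, we get $\E\langle e^{\bi\Delta\tau}a_k,\mathcal R_{mart}(v_k)\rangle=0$ and hence $|\E\langle A_k,\mathcal R_{mart}(v_k)\rangle|\le\|r_k\|_{L^2}\|\mathcal R_{mart}(v_k)\|_{L^2}\le C\tau^{3/2}\sqrt{e_k}$; together with $|\E\langle A_k,\mathcal R_{drift}(v_k)\rangle|\le e^{C\tau}\sqrt{e_k}\cdot C\tau^{3/2}$ and $\E\|B_k\|^2\le C\tau^2$, Young's inequality yields $e_{k+1}\le e^{C'\tau}e_k+C\tau^2$. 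With $e_0=0$, the discrete Gronwall inequality gives $e_N\le C\tau$, i.e. $\|u^{\epsilon}_N-\widehat u^{\epsilon}_N\|_{L^2(\Omega;\HH)}\le C\tau^{1/2}$, and combining with Proposition \ref{tm-con-mul-com} proves the claim for $p=2$; the general $p\ge2$ case follows the same pattern.

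The main obstacle is the cross term $\E\langle A_k,\mathcal R_{mart}(v_k)\rangle$: a brute Cauchy--Schwarz bound leaves a per-step forcing of size $\tau$, which fails to vanish after summing $N=T/\tau$ steps. The remedy is to peel off the $\mathcal F_{t_k}$-measurable leading part $e^{\bi\Delta\tau}a_k$ of $A_k$, against which the conditionally centered increment $\mathcal R_{mart}(v_k)$ integrates to zero, leaving only the higher-order remainder $r_k=O(\sqrt\tau\sqrt{e_k})$; this restores the $O(\tau^{3/2})$-per-step forcing compatible with the $\tau^{1/2}$ rate. (Equivalently, one may bypass this point by repeating the It\^o-formula argument of Proposition \ref{tm-con-mul-com} with $\widetilde\Phi_{M}$ in place of $\Phi_{M}$, so that the martingale terms disappear in expectation automatically; the additional input is then precisely the consistency estimate on $\mathcal R_{drift}$ and $\mathcal R_{mart}$ above, which rests on the $1/2$-order time regularity and $\HH^1$-stability of the $M$-subflow.)
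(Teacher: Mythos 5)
Your argument is correct in substance but follows a genuinely different route from the paper. You bound $u^{\epsilon}_k-u^{\epsilon}(t_k)$ by the triangle inequality through the exact-subflow splitting solution of \eqref{spl-mul-com}, invoke Proposition \ref{tm-con-mul-com} for that piece, and reduce everything to a numerical-versus-numerical perturbation bound: a one-step consistency estimate $\widetilde\Phi_{M,\mathcal F_{t_k}}^{\tau}-\Phi_{M,\mathcal F_{t_k}}^{\tau}$ of size $O(\tau^{3/2})$ (drift) plus a conditionally centered martingale part of size $O(\tau)$, combined with $L^2$-stability of $\Phi_M^{\tau}$ and $\Phi_f^{\tau}$ and the conditional-centering trick that peels off the $\mathcal F_{t_k}$-measurable leading part $e^{\bi\Delta\tau}a_k$ so the cross term becomes $O(\tau^{3/2}\sqrt{e_k})$. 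The paper instead proves the result directly, without passing through \eqref{spl-mul-com}: it introduces the continuous-time interpolation $\widehat\Phi^t_k$ of the exponential-Euler-based step, applies the It\^o formula to $\E\big[\|\Psi^t_k(v)-\widehat\Phi^t_k(w)\|^2\big]$ so that all martingale contributions vanish upon taking expectation (this is exactly the alternative you sketch in your closing parenthesis), and then estimates the consistency terms $\widetilde\Phi^{t}_{M,\mathcal F_{t_k}}(\Phi_f^t(w))-\Phi_f^t(w)$ and $\Phi_f^t(w)-\Phi_f^{\tau}(w)$, arriving at the same recursion $e_{k+1}\le e^{C\tau}(e_k+C\tau^2(1+|\log(\epsilon)|^2))$. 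Your modular route buys reuse of Proposition \ref{tm-con-mul-com} as a black box and makes transparent that the exponential-Euler substep adds only an $O(\tau^{1/2})$ error with no extra $|\log(\epsilon)|$ factor, at the price of the extra bookkeeping for the cross term; the paper's route avoids that cross-term issue entirely but must redo the whole flow comparison. One small imprecision: your consistency error is evaluated at $v_k=\Phi_f^{\tau}(u^{\epsilon}_k)$, so you need the uniform $\mathbb H^1$ a priori bound for the scheme \eqref{spl-mul-com1} itself, not only for $\widehat u^{\epsilon}_k$ as you state; this bound is indeed available (the paper asserts it for \eqref{spl-mul-com1} via the same arguments as in Proposition \ref{tm-con-mul-com}), or you can sidestep it by inserting the intermediate term $\widetilde\Phi_{M,\mathcal F_{t_k}}^{\tau}(\Phi_f^{\tau}(\widehat u^{\epsilon}_k))$ instead, so this is not a genuine gap.
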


\begin{proof}
	The proof is  similar to that of  Proposition \ref{tm-con-mul-com}. We present the details for $p=2.$
	The main difference is that for an $\mathcal F_{t_k}$-measurable $w$, ${\Phi^t_k}(w)$ is replaced by $\widehat \Phi^t_k(w)$  which satisfies 
	\begin{align*}
	d{\widehat \Phi^t_k}(w)=&\bi \Delta {\widehat \Phi^t_k}(w)dt+\bi e^{\bi \Delta t}g(|\Phi^{\tau}_f(w)|^2)\Phi^{\tau}_f(w)d{\widetilde W_k}(t)+{e^{\bi\Delta t}}II_{mod}({\Phi^{\tau}_f(w)})dt\\
	&+\bi \lambda e^{\bi \Delta t} [f_{\epsilon}(|\Phi_f^t (w)|^2)\Phi_f^t (w)]dt,
	\end{align*}
	where 
	\begin{align*}
	II_{mod}(\Phi^{\tau}_f(w)):=&-\frac 12(g(|{\Phi^{\tau}_f(w)}|^2))^2{\Phi^{\tau}_f(w)}\sum_{i\in \mathbb N^+}|Q^{\frac 12}e_i|^2\\
	&-\bi g'(|{\Phi^{\tau}_f(w)}|^2)g(|{\Phi^{\tau}_f(w)}|^2)|{\Phi^{\tau}_f(w)}|^2{\Phi^{\tau}_f(w)}\sum_{i\in \mathbb N^+} Im( Q^{\frac 12}e_i) Q^{\frac 12}e_i.
	\end{align*}
	Then we have that ${
		{\widehat \Phi}}^{\tau}_k(w)=\lim\limits_{t\to\tau}\widetilde \Phi^{t}_{M,\mathcal F_{t_k}}{(\Phi^{t}_f(w)).}$ 
	The following a priori estimates of the numerical schemes can be obtained,
	{
		\begin{align*}
		\sup_{k\le N-1}\sup_{t\in[0,\tau]}\big[ \|\Phi_f^t(u_{k}^{\epsilon})\|_{L^q(\Omega;\mathbb H^1)}+\|{
			{\widehat \Phi}}_k^t(u_{k}^{\epsilon})\|_{L^q(\Omega;\mathbb H^1)}\big] <\infty
		\end{align*} 
		for any $q\ge 2$, via similar steps in  the proof of Proposition \ref{tm-con-mul-com}.
		Using} the It\^o formula  and letting $v=u^{\epsilon}(t_k)$ and $w=u^{\epsilon}_k$, then taking expectation and {exploiting} the conditions \eqref{con-g1} and \eqref{con-g} on $g$, as well as  the condition (A2) on $f_{\epsilon}$,  yield that
		\begin{align*}
		&\E\Big[\|\Psi^{t}_k(v)-\widehat\Phi^{t}_k(w)\|^2\Big]\\
		%&\le \E\Big[\|v-w\|^2\Big]+C\int_0^t\E\Big[\|\Psi^{s}(v)-\Phi^{s}(w)\|^2\Big]ds\\
		%&+C\int_0^t\E\Big[\Big\|f_{\epsilon}(|\Phi^s(v)|^2)\Phi^s(v)-e^{\bi \Delta s}f_{\epsilon}(|\Phi_f^s(v)|^2)\Phi_f^s(v)\Big\|^2\Big]ds\\
		%&+C\int_0^t \sum_{k\in\mathbb N^+}\<|Q^{\frac 12}e_i|^2 (g(|\Psi^s(v)|^2))-g(|\Phi^s(w)|^2))\Psi^s(v),(g(|\Psi^s(v)|^2))-g(|\Phi^s(w)|^2))\Phi^s(w) \>ds\\
		%&+C\int_0^t\Big|\Big\<\Psi^{s}(v)-\Phi^{s}(w),-\bi \Big(g'(|\Psi^s(v)|^2)g(|\Psi^s(v)|^2)|\Psi^s(v)|^2\Psi^s(v) \\
		%&\quad-g'(|\Phi^s(w)|^2)g(|\Phi^s(w)|^2)|\Phi^s(w)|^2\Phi^s(w)\Big)\sum_{i} Im(Q^{\frac 12}e_i) Q^{\frac 12}e_i\Big\>\Big|ds\\
		\le &\E\Big[\|v-w\|^2\Big]+C\E\int_0^t \E \Big[\|\widehat \Phi^s_k(w)-\Phi_f^\tau(w)\|^2\Big]ds +
		C\int_0^t\E\Big[\|\Psi^{s}_k(v)-\widehat \Phi^{s}_k(w)\|^2\Big]ds\\
		+&C\int_0^t \E\Big[\Big\|f_{\epsilon}(|\widehat \Phi^s_k(w)|^2) \widehat  \Phi^s_k(w)-e^{\bi \Delta s}f_{\epsilon}(|\Phi_f^s(w)|^2)\Phi_f^s(w)\Big\|^2\Big]ds\\
		&+C\int_0^t \E\Big[\Big\| II_{mod}(\widehat \Phi^s_k(w))-e^{\bi \Delta s}II_{mod}(\Phi_f^\tau (w))\Big\|^2\Big]ds.
		\end{align*}
				
		Similar to the proof of Proposition \ref{tm-con-mul-com}, it suffices to estimate  $\widetilde \Phi^{t}_{M,\mathcal F_{t_k}} (\Phi_f^t(w))-\Phi_f^t(w)$ for  and  $\Phi_f^t(w)-\Phi_f^\tau (w)$ for $t\in [0,\tau]$.
By the definition of $\Phi_f$ and (A1), $\E \Big[ \| \Phi_f^t(w)-\Phi_f^\tau (w)\|^2\Big]\le C \tau^2(1+|\log(\epsilon)|^2) \|\Phi_f^t(w)\|^2.$		
	The definition of $\widetilde \Phi^{t}_{M,\mathcal F_{t_k}}$  yields that
	\begin{align*}
	&\widetilde \Phi^{t}_{M,\mathcal F_{t_k}} (\Phi_f^t(w))-\Phi_f^t(w)\\
	=&
	(e^{\bi \Delta t}-I)\Phi_f^t(w)
	+\int_{0}^t e^{\bi \Delta{ t}} II_{mod}(\Phi_f^\tau (w))ds+\int_0^t e^{\bi \Delta t} \bi g(|\Phi^{\tau}_f(w)|^2)\Phi^{\tau}_f(w)d\widetilde W_k(s).
	\end{align*}
	By taking the second moment and using the continuity estimate of $e^{\bi \Delta t},$
	we get 
	\begin{align*}
	&\E\Big[\|\widetilde \Phi^{t}_{M,\mathcal F_{t_k}} (\Phi_f^t(w))-\Phi_f^t(w)\|^2\Big]\\
	\le& 2\tau \E\Big[\|\Phi_f^t(w)\|_{{\mathbb H^1}}^2\Big]+2\E\Big[\int_0^t \|II_{mod}({\Phi_f^\tau (w)})\|^2ds\Big]\\
	&+2\E\Big[\|\int_0^te^{\bi \Delta t}g(|\Phi_f^{\tau} (w)|^2)\Phi^{\tau}_f(w)d{\widetilde W_k}(s)\|^2\Big]
	\le C\tau \Big(1+\E[\|\Phi_f^{t} (w)\|_{\mathbb H^1}^2\Big]\Big).
	\end{align*}
	Substituting the above estimates into the estimate $\E\Big[\|\Psi^{t}_k(v)-\widehat \Phi^{t}_k(w)\|^2\Big]$, and using the priori estimate of $\Phi_f^t(w)$ and $\widehat \Phi_k^t(w)$, we get that for $t=\tau,$ we have 
	\begin{align*}
	&\E\Big[\|u^{\epsilon}(t_{k+1})-u_{k+1}^{\epsilon}\|^2\Big]
	\le e^{C\tau}\Big(\E\Big[\|u^{\epsilon}(t_{k})-(u_{k}^{\epsilon})\|^2\Big]+C\tau^2(1+|\log(\epsilon)|^2)\Big).
	\end{align*}
	By repeating the above procedures, we conclude 
	\begin{align*}
	\E\Big[\|u^{\epsilon}(t_{k+1})-u_{k+1}^{\epsilon}\|^2\Big]
	&\le C(Q,T,\lambda,u_0,{p,\tilde g})(1+|\log(\epsilon)|^2)\tau,
	\end{align*}
	which completes the proof. 
\end{proof}

\begin{prop}\label{tm-con-real}
	Let the condition of Proposition \ref{tm-con-mul-com} hold. Assume that $W(t)$ is an $L^2(\mathcal O;\mathbb R)$-valued process. 
	Then 
	the splitting scheme \eqref{spl-mul-rea} is 
	strongly convergent. Moreover, for $p\ge2,$  there exists $ C(Q,T,\lambda,p,u_0,\widetilde g)>0$ such that
	\begin{align*}
	\sup_{k\le N}\|u^{\epsilon}_k-u^{\epsilon}(t_k)\|_{L^{p}(\Omega;\mathbb H)}&\le{ C(Q,T,\lambda,p,u_0,\widetilde g)(1+|\log(\epsilon)|)}\tau^{\frac 12}.
	\end{align*}
\end{prop}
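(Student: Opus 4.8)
The plan is to reproduce, for the splitting \eqref{spl-mul-rea}, the one‑step consistency/stability scheme used for Proposition~\ref{tm-con-mul-com}, exploiting that here the half‑flow $\Phi_{f+g,\mathcal F_{t_k}}^t(w)=w\exp\bigl(\bi\lambda f_{\epsilon}(|w|^2)t+\bi g(|w|^2)\widetilde W_k(t)\bigr)$, with $\widetilde W_k(t):=W(t_k+t)-W(t_k)$, is given in closed form and preserves the modulus pointwise ($|\Phi_{f+g,\mathcal F_{t_k}}^t(w)|=|w|$), while $\Phi_D^t=e^{\bi\Delta t}$ is unitary. First I would fix $k$, take $\mathcal F_{t_k}$‑measurable $v,w\in\mathbb H^1$ with finite moments, set $\phi^t:=\Phi_{f+g,\mathcal F_{t_k}}^t(w)$ and $\Phi_k^t(w):=\Phi_D^t(\phi^t)=e^{\bi\Delta t}\phi^t$, and write the It\^o‑form SDEs on $[0,\tau]$ for the exact flow $\Psi_k^t(v)$ of \eqref{Reg-SlogS} and for $\Phi_k^t(w)$. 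Since $W$ is $L^2(\mathcal O;\mathbb R)$‑valued, only the $-\tfrac12 g(|\cdot|^2)^2(\cdot)$ Stratonovich correction survives ($\mathrm{Im}(Q^{\frac12}e_i)=0$), so
\begin{align*}
d\Psi_k^t(v)&=\bi\Delta\Psi_k^t(v)dt+\bi\lambda f_{\epsilon}(|\Psi_k^t(v)|^2)\Psi_k^t(v)dt\\
&\quad-\tfrac12 g(|\Psi_k^t(v)|^2)^2\Psi_k^t(v)\sum_{i\in\mathbb N^+}|Q^{\frac12}e_i|^2dt+\bi g(|\Psi_k^t(v)|^2)\Psi_k^t(v)d\widetilde W_k(t),\\
d\Phi_k^t(w)&=\bi\Delta\Phi_k^t(w)dt+\bi\lambda e^{\bi\Delta t}\bigl(f_{\epsilon}(|w|^2)\phi^t\bigr)dt\\
&\quad-\tfrac12 e^{\bi\Delta t}\bigl(g(|w|^2)^2\phi^t\bigr)\sum_{i\in\mathbb N^+}|Q^{\frac12}e_i|^2dt+\bi e^{\bi\Delta t}\bigl(g(|w|^2)\phi^t\bigr)d\widetilde W_k(t).
\end{align*}

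Next I would apply It\^o's formula to $\|\varepsilon^t\|^2$ with $\varepsilon^t:=\Psi_k^t(v)-\Phi_k^t(w)$ and take expectations; the $\bi\Delta$ terms drop by skew‑adjointness, the $d\widetilde W_k$ integral has zero mean, and one is left with the drift pairings and the quadratic‑variation term $\sum_i\E\|[\,g(|\Psi_k^s(v)|^2)\Psi_k^s(v)-e^{\bi\Delta s}(g(|w|^2)\phi^s)\,]Q^{\frac12}e_i\|^2$. In each of these I would insert $\pm$ the frozen quantities $f_{\epsilon}(|\Phi_k^s(w)|^2)\Phi_k^s(w)$ and $g(|\Phi_k^s(w)|^2)\Phi_k^s(w)$, splitting each difference into a \emph{stability} part (the nonlinear map at $\Psi_k^s(v)$ minus at $\Phi_k^s(w)$) and a \emph{consistency} part (the nonlinear map at $\Phi_k^s(w)=e^{\bi\Delta s}\phi^s$ minus $e^{\bi\Delta s}$ applied to the map at $\phi^s$; note $|\phi^s|=|w|$ makes the arguments of $f_{\epsilon},g$ agree). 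The stability parts are absorbed by (A2) for $f_{\epsilon}$, and by \eqref{con-g1}, \eqref{con-g} together with the boundedness of $g$ for the $g$‑terms, yielding a Gronwall contribution $C\int_0^t\E\|\varepsilon^s\|^2ds$. Each consistency part is a commutator of the form $(e^{\bi\Delta s}-I)N(\phi^s)+\bigl(N(e^{\bi\Delta s}\phi^s)-N(\phi^s)\bigr)$; using $\|(e^{\bi\Delta s}-I)z\|\le C\sqrt s\|z\|_{\mathbb H^1}$, the Lipschitz bound \eqref{con-f} for $v\mapsto f_{\epsilon}(|v|^2)v$ (resp.\ boundedness of $g$), and $\|f_{\epsilon}(|\phi^s|^2)\phi^s\|_{\mathbb H^1}\le C(1+|\log(\epsilon)|)\|\phi^s\|_{\mathbb H^1}$ via (A1) and (A5) (resp.\ $\|g(|\phi^s|^2)\phi^s\|_{\mathbb H^1}\le C\|\phi^s\|_{\mathbb H^1}$), each consistency part is $\le C\sqrt s(1+|\log(\epsilon)|)\|\phi^s\|_{\mathbb H^1}$, so by Young's inequality their total contribution to $\E\|\varepsilon^t\|^2$ is $\le C\int_0^t s(1+|\log(\epsilon)|^2)\E\|\phi^s\|_{\mathbb H^1}^2ds$.

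The remaining ingredient, and the step I expect to require the most care (it is the only genuinely new point compared with Proposition~\ref{tm-con-mul-com}), is an $\mathbb H^1$ a priori/stability bound for the half‑flow $\Phi_{f+g}$, because the real noise now sits inside the phase. Differentiating the explicit exponential gives $\nabla\phi^s=e^{\bi\lambda f_{\epsilon}(|w|^2)s+\bi g(|w|^2)\widetilde W_k(s)}\bigl(\nabla w+\bi w[\lambda s\,\nabla f_{\epsilon}(|w|^2)+\widetilde W_k(s)\,\nabla g(|w|^2)+g(|w|^2)\,\nabla\widetilde W_k(s)]\bigr)$, where $|w\,\nabla f_{\epsilon}(|w|^2)|\le C|\nabla w|$ by (A5) and $|w\,\nabla g(|w|^2)|\le C|\nabla w|$ (since $|g'(|x|^2)||x|^2\le C_g$), so the new terms are $w\,\widetilde W_k(s)\,\nabla g(|w|^2)$ and $w\,g(|w|^2)\,\nabla\widetilde W_k(s)$; taking second moments and using $\E\bigl[\sup_{s\le\tau}\|\widetilde W_k(s)\|_{W^{1,\infty}}^2\bigr]\le C\tau\sum_{i\in\mathbb N^+}\|Q^{\frac12}e_i\|_{W^{1,\infty}}^2$, which is finite under the hypotheses of Theorem~\ref{mild-general}, these contribute only $O(\tau)$, whence $\sup_{s\in[0,\tau]}\E\|\phi^s\|_{\mathbb H^1}^2\le C\bigl(1+\tau(1+|\log(\epsilon)|^2)\bigr)\|w\|_{\mathbb H^1}^2$ (and likewise in $L^q(\Omega;\mathbb H^1)$ for any $q\ge2$ via Burkholder). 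Iterating this over $k$, together with $\|\Phi_D^\tau\cdot\|_{\mathbb H^1}=\|\cdot\|_{\mathbb H^1}$ and $\|\phi^s\|=\|w\|$, yields step‑uniform bounds $\sup_k\E\|u_k^{\epsilon}\|_{\mathbb H^1}^q<\infty$, exactly as in Proposition~\ref{tm-con-mul-com}.

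Finally, putting $w=u_k^{\epsilon}$, $v=u^{\epsilon}(t_k)$ and $t=\tau$, Gronwall's lemma gives the one‑step estimate $\E\|u^{\epsilon}(t_{k+1})-u_{k+1}^{\epsilon}\|^2\le e^{C\tau}\bigl(\E\|u^{\epsilon}(t_k)-u_k^{\epsilon}\|^2+C\tau^2(1+|\log(\epsilon)|^2)\bigr)$; telescoping over $k=0,\dots,N-1$ with $u^{\epsilon}(0)=u_0^{\epsilon}$ and taking square roots gives the claim for $p=2$, and the general $p\ge2$ case follows by running the same argument on $\|\varepsilon^t\|^p$ with the Burkholder inequality for the martingale term, as in Propositions~\ref{tm-con-add}--\ref{tm-con-com}. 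Thus the whole argument is structurally the one already used in this section; the only place that genuinely differs is the $\mathbb H^1$ stability of $\Phi_{f+g}$, where one must check that the $\nabla\widetilde W_k$ and $\widetilde W_k\,\nabla g(|w|^2)$ contributions are $O(\tau)$ in mean square so that they spoil neither the step‑uniform $\mathbb H^1$ bound nor the $\tau^{1/2}$ rate.
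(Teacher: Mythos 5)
Your proposal is correct and follows essentially the same route as the paper's proof: rewrite the exact flow $\Psi_k^t$ and the splitting flow $\Phi_k^t(w)=e^{\bi\Delta t}\Phi_{f+g,\mathcal F_{t_k}}^t(w)$ as It\^o equations on $[0,\tau]$, compare them via It\^o's formula for $\|\Psi_k^t(v)-\Phi_k^t(w)\|^2$, absorb the stability parts by (A2), \eqref{con-g1}, \eqref{con-g}, control the consistency parts through $\|(I-e^{\bi\Delta s})z\|\le C\sqrt{s}\|z\|_{\mathbb H^1}$, \eqref{con-f}, (A1), (A5) and the $\mathbb H^1$ bound on $\Phi_{f+g}$, and conclude by Gronwall and iteration with local error $C\tau^2(1+|\log(\epsilon)|^2)$, exactly as in the paper. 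The only point to adjust is your intermediate estimate $\E\big[\sup_{s\le\tau}\|\widetilde W_k(s)\|_{W^{1,\infty}}^2\big]\le C\tau\sum_{i}\|Q^{\frac12}e_i\|_{W^{1,\infty}}^2$, which is not justified from square-summability alone since $W^{1,\infty}$ is not of type $2$; it is also not needed, because the terms it serves (e.g.\ $w\,g(|w|^2)\nabla\widetilde W_k(s)$ and $w\,\widetilde W_k(s)\nabla g(|w|^2)$) can be bounded directly in $L^2(\Omega;\mathbb H)$ using the $\mathcal F_{t_k}$-measurability of $w$, Fubini and the pointwise It\^o isometry/Doob inequality for the scalar martingales $s\mapsto \widetilde W_k(s)(x)$, $s\mapsto\nabla\widetilde W_k(s)(x)$, which yields the same $O(\tau)$ contribution under the hypotheses of Theorem \ref{mild-general} and is in effect what the paper's It\^o-based a priori estimate of the numerical solution does.
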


\begin{proof}
	We only present the details of the case that $p=2.$  
	Assume that $v,w\in \mathbb H^1$ are $\mathcal F_{t_k}$-measurable and have any finite moment. 
	Denote the exact flow of Eq. \eqref{Reg-SlogS} on $[t_k,t_{k+1}]$ by $\Psi_k^t,$ $t\in[0,\tau]$, and the numerical flow of \eqref{spl-mul-rea} by $\Phi^t_k.$ 
	Then the equation of $\Phi^t_k(w)$ on a small interval $[0,\tau]$ can be rewritten as 
	\begin{align*}
	d{\Phi^t_k}(w)
	=&\bi \Delta {\Phi^t_k}(w)dt+\bi e^{\bi \Delta t} g(|\Phi_{f+g,\mathcal F_{t_k}}^t(w)|^2)\Phi_{f+g,\mathcal F_{t_k}}^t(w)d\widetilde W_k(t)\\
	&-\frac 12e^{\bi \Delta t}(g(|\Phi_{f+g,\mathcal F_{t_k}}^t(w)|^2))^2\Phi_{f+g,\mathcal F_{t_k}}^t(w)\sum_{i}|Q^{\frac 12}e_i|^2\\
	&+\bi \lambda e^{\bi \Delta t} [f_{\epsilon}(|\Phi_{f+g,\mathcal F_{t_k}}^t(w)|^2)\Phi_{f+g,\mathcal F_{t_k}}^t(w)]dt.
	\end{align*}
	Denote $v=u^{\epsilon}(t_k)$ and $w=u^{\epsilon}_k$.
	Following the similar procedures in the proof of Proposition \ref{tm-con-mul-com}, and
	using the chain rule, the growth condition on $g$, \eqref{con-g}, \eqref{con-f} and (A2), we get that for $t\in [0,\tau],$
	{\small
		\begin{align*}
		\E\Big[\|\Psi^{t}_k(v)-\Phi^{t}_k(w)\|^2\Big]
		\le& \E\Big[\|v-w\|^2\Big]+C\int_0^t\E\Big[\|\Psi_k^{s}(v)-\Phi^{s}_k(w)\|^2\Big]ds\\
		&+C\int_0^t\E\Big[\Big\|(I-e^{\bi \Delta s})f_{\epsilon}(|\Phi_{f+g,\mathcal F_{t_k}}^s(w)|^2)\Phi_{f+g,\mathcal F_{t_k}}^s(w)\Big\|^2\Big]ds\\
		&+C(1+|\log(\epsilon)|^2)\int_0^t\E\Big[\Big\|\Phi^s_{k}(w)-\Phi_{f+g,\mathcal F_{t_k}}^s(w)\Big\|^2\Big]ds\\
		&+C\int_0^t\E\Big[\|(I-e^{\bi\Delta s})(g(|\Phi_{f+g,\mathcal F_{t_k}}^s(w)|^2))^2\Phi_{f+g,\mathcal F_{t_k}}^s(w)\|^2\Big]ds.
		\end{align*}
		B}y the similar procedures in the proof of Proposition \ref{tm-con-mul-com}, it is not hard to obtain the a priori estimate of the numerical solution, 
	that is 
	$\sup_{k\le N} \E\Big[\|u_k^{\epsilon}\|_{\mathbb H^1}^p\Big] \Big]\le C.$
	The property of $e^{\bi\Delta s}$, the growth condition of $g$, (A1) and (A5), yield that 
	\begin{align*}
	&\E\Big[\Big\|\Phi^s_k(w)-\Phi_{f+g,\mathcal F_{t_k}}^s(w)\Big\|^2\Big]
	\le C\E\Big[\|\Phi_D^s\Phi_{f+g,\mathcal F_{t_k}}^s(w)-\Phi_{f+g,\mathcal F_{t_k}}^s(w)\|^2\Big]\\
	\le& C\tau \E\Big[\|\Phi_{f+g,\mathcal F_{t_k}}^s(w)\|_{\mathbb H^1}^2\Big]\le C\tau(1+|\log(\epsilon)|^2)\E\Big[\|w\|_{\mathbb H^1}^2\Big].
	\end{align*}
	and
	\begin{align*}
	&\E\Big[\Big\|(I-e^{\bi\Delta s})f_{\epsilon}(|\Phi_{f+g,\mathcal F_{t_k}}^s(w)|^2)\Phi_{f+g,\mathcal F_{t_k}}^s(w)\Big\|^2\Big]\\
	&+\E\Big[\|(I-e^{\bi\Delta s})(g(|\Phi_{f+g,\mathcal F_{t_k}}^s(w)|^2))^2\Phi_{f+g,\mathcal F_{t_k}}^s(w)\|^2\Big]
	\le C\tau(1+|\log(\epsilon)|^2)\E\Big[\|w\|_{\mathbb H^1}^2\Big].
	\end{align*}
	Then the Gronwall inequality, together with the above estimates, yields that 
	\begin{align*}
	\E\Big[\|{\Psi_k^{t}(v)-\Phi_k^{t}}(w)\|^2\Big]
	\le& e^{C\tau}\Big(\E\Big[\|v-w\|^2\Big]+\tau^2(1+|\log(\epsilon)|^2)\E\Big[{\|w\|_{\HH^1}^2}\Big]\Big)\\
	\le & e^{C\tau}\Big(\E\Big[\|v-w\|^2\Big]+\tau^2(1+|\log(\epsilon)|^2)\Big).
	\end{align*}
	Making use of an iteration argument and the a priori estimates of $u^{\epsilon}_k$, we obtain 
	\begin{align*}
	\E\Big[\|u^{\epsilon}_{k+1}-u(t_{k+1})\|^2\Big]
	&\le  e^{C\tau}\Big(\E\Big[\|u^{\epsilon}_{k}-u(t_{k})\|^2\Big]+C\tau^2(1+|\log(\epsilon)|^2)\Big)\\
	&\le \cdots
	\le  C\tau(1+|\log(\epsilon)|^2),
	\end{align*}
	which completes the proof. 
\end{proof}

With slight modification of our approach, one can obtain the same strong convergence rate for the exponential Euler method or the accelerated exponential Euler method for RSlogS equations. 
Combining the approximation error between Eq.  \eqref{SlogS} and Eq. \eqref{Reg-SlogS} in Lemma \ref{lm-con}, and the strong convergence result in Propositions  \ref{tm-con-add}-\ref{tm-con-real}, we obtain the following convergence result.

\iffalse
\begin{rk}
	From the above proof, one may improve the condition on $Q$. For instance, we could assume that $\sum_{i \in \mathbb N^+}\|Q^{\frac 12}e_i\|_{\mathbb H^1}^2+\sum_{i \in \mathbb N^+}\|\nabla Q^{\frac 12}e_i\|_{L^q}^2<\infty$. 
	Here $\frac {1}p+\frac 1q=\frac 12$ such that $\mathbb H^1\hookrightarrow L^p ,$ $2\le p\le \frac {2d}{\max(d-2,0)}.$ Then following the same procedures as in the proof of Proposition \ref{tm-con-real}, one could obtain the same convergence rate.
	We also like to remark that one could obtain the strong convergence rate of exponential Euler method by similar arguments.
\end{rk}
\fi
\begin{tm}\label{tm-con-total}
	Let Assumption \ref{main-reg-fun} and the condition of Theorem \ref{mild-general} hold.  Assume in addition that $f_{\epsilon}$ satisfies \eqref{con-f}.
	\iffalse
	\begin{align*}
	|f_{\epsilon}(|x|^2)x-f_{\epsilon}(|y|^2)y|&\le C(1+|\log(\epsilon)|)|x-y|.
	\end{align*}
	\fi
	Then the numerical solution of \eqref{spl-add}-\eqref{spl-mul-rea} is strongly convergent to the exact one of Eq. \eqref{SlogS}. Moreover, for $p\ge2$ and  $\delta\in \big(0, \max(\frac 2{\max(d-2,0)},1)\big),$     there exist $C(Q,T,\lambda,p,u_0,\delta,\widetilde g)>0$ and $C(Q,T,\lambda,p,u_0,\alpha,\delta, $ $\widetilde g) >0$ such that when $\mathcal O$ is a bounded domain,
	\begin{align*}
	\sup_{k\le N}\|u^{\epsilon}_k-u(t_k)\|_{L^{p}(\Omega;\mathbb H)}&\le C(Q,T,\lambda,p,u_0,\delta,\widetilde g) ((1+|\log(\epsilon)|)\tau^{\frac 12}+\epsilon^{\frac 12}+\epsilon^{\frac {{\delta}}2}),
	\end{align*}
	and when $\mathcal O=\mathbb R^d,$
	\begin{align*}
	\sup_{k\le N}\|u^{\epsilon}_k-u(t_k)\|_{L^{p}(\Omega;\mathbb H)}&\le C(Q,T,\lambda,p,u_0,\alpha,\delta,\widetilde g) ((1+|\log(\epsilon)|)\tau^{\frac 12}+\epsilon^{\frac {\alpha}{2\alpha+d}}+\epsilon^{\frac {{\delta}}2}).
	\end{align*}
\end{tm}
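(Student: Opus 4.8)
The plan is to obtain the result by a triangle inequality, combining the $\epsilon$-independent strong convergence rates of the regularized splitting schemes for Eq.~\eqref{Reg-SlogS} with the regularization error between $u^0=u$ and $u^\epsilon$ recorded in Lemma~\ref{lm-con}. Concretely, for each $k\le N$ I would write
$$\|u^\epsilon_k-u(t_k)\|_{L^p(\Omega;\mathbb H)}\le \|u^\epsilon_k-u^\epsilon(t_k)\|_{L^p(\Omega;\mathbb H)}+\|u^\epsilon(t_k)-u^0(t_k)\|_{L^p(\Omega;\mathbb H)},$$
and take $\sup_{k\le N}$ on the right-hand side of each summand separately.

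For the first summand, I would invoke, according to the form of $\widetilde g$ and the space in which $W$ takes values, one of Propositions~\ref{tm-con-add} (case $\widetilde g=1$, scheme~\eqref{spl-add}), \ref{tm-con-mul-com} (case $\widetilde g(x)=\mathbf i g(|x|^2)x$ with $\mathbb H$-valued $W$, scheme~\eqref{spl-mul-com}), \ref{tm-con-com} (scheme~\eqref{spl-mul-com1}), and \ref{tm-con-real} (case $L^2(\mathcal O;\mathbb R)$-valued $W$, scheme~\eqref{spl-mul-rea}); each of these already yields, under the hypothesis \eqref{con-f} assumed here, the bound $\sup_{k\le N}\|u^\epsilon_k-u^\epsilon(t_k)\|_{L^p(\Omega;\mathbb H)}\le C(1+|\log(\epsilon)|)\tau^{1/2}$ with a constant not depending on $\epsilon$ or $\tau$. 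For the second summand, Lemma~\ref{lm-con} gives, when $\mathcal O$ is a bounded domain, $\mathbb E[\sup_{t\in[0,T]}\|u^0(t)-u^\epsilon(t)\|^p]\le C(\epsilon^{p/2}+\epsilon^{\delta p/2})$; since $\sup_{k\le N}\|u^\epsilon(t_k)-u^0(t_k)\|_{L^p(\Omega;\mathbb H)}\le (\mathbb E[\sup_{t\in[0,T]}\|u^0(t)-u^\epsilon(t)\|^p])^{1/p}$, taking $p$-th roots and using the subadditivity of $r\mapsto r^{1/p}$ converts this into $\sup_{k\le N}\|u^\epsilon(t_k)-u^0(t_k)\|_{L^p(\Omega;\mathbb H)}\le C(\epsilon^{1/2}+\epsilon^{\delta/2})$, and the $\mathcal O=\mathbb R^d$ case is handled identically with $\epsilon^{1/2}$ replaced by $\epsilon^{\alpha/(2\alpha+d)}$. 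Adding the two estimates produces the asserted bounds in both geometric settings, with the admissible exponent range $\delta\in(0,\max(2/\max(d-2,0),1))$ inherited directly from Lemma~\ref{lm-con}.

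I do not expect any genuine analytic obstacle here: this is an assembly step rather than a new argument. The only points requiring a little care are (i) to confirm that the constants appearing in Propositions~\ref{tm-con-add}--\ref{tm-con-real} are truly independent of $\epsilon$ --- which is exactly the $\epsilon$-independent a priori estimate established in those proofs, so that the bound is meaningful and one may later choose $\epsilon$ as a function of $\tau$ (e.g. $\epsilon\asymp\tau$) to balance the two contributions --- and (ii) to check that the extra hypothesis \eqref{con-f} imposed in the present statement is compatible with Assumption~\ref{main-reg-fun} and is precisely the additional assumption under which each of Propositions~\ref{tm-con-add}, \ref{tm-con-mul-com}, \ref{tm-con-com} and \ref{tm-con-real} is proved, so that all four strong convergence estimates are simultaneously available.
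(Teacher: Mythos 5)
Your proposal is correct and coincides with the paper's own argument: the theorem is obtained there exactly by the triangle inequality you describe, combining the $\epsilon$-independent (up to the $1+|\log(\epsilon)|$ factor) strong error bounds of Propositions~\ref{tm-con-add}--\ref{tm-con-real} for $\|u^\epsilon_k-u^\epsilon(t_k)\|_{L^p(\Omega;\mathbb H)}$ with the regularization error of Lemma~\ref{lm-con} for $\|u^\epsilon(t_k)-u(t_k)\|_{L^p(\Omega;\mathbb H)}$ in the bounded-domain and $\mathbb R^d$ cases. No gap to report.
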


\begin{cor}\label{cor-entropy-scheme}
	Let the condition of Theorem \ref{tm-con-total} hold.
	Then the regularized  entropy of \eqref{spl-add}-\eqref{spl-mul-rea} is  strongly convergent to the entropy of Eq. \eqref{SlogS}. 
	Furthermore, for $p\ge2$ and $\delta\in \big(0, \max(\frac 2{\max(d-2,0)},1)\big),$   there exist $C(Q,T,\lambda,p,u_0,\delta,\widetilde g)>0$ and $C(Q,T,\lambda,p,u_0,\alpha,\delta,\widetilde g)>0$  such that when $\mathcal O$ is a bounded domain,
	\begin{align*}
	\|F(|u(t_k)|^2)-F(|u_k|^2)\|_{L^p(\Omega)} \le C(Q,T,\lambda,p,u_0,\delta,\widetilde g)((1+|\log(\epsilon)|^2)\tau^{\frac 12}+\epsilon^{\frac 12}+\epsilon^{\frac {{\delta}}2}),
	\end{align*}
	and when $\mathcal O=\mathbb R^d,$ {\small 
		\begin{align*}
		\|F(|u(t_k)|^2)-F(|u_k|^2)\|_{L^p(\Omega)}
		&\le C(Q,T,\lambda,p,u_0,\alpha,\delta,\widetilde g)((1+|\log(\epsilon)|^2)\tau^{\frac 12}+\epsilon^{\frac {\alpha}{2\alpha+d}}+\epsilon^{\frac {{\delta}}2}).
		\end{align*}}
\end{cor}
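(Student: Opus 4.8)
The plan is to deduce the statement from Corollary~\ref{cor-entropy} (the regularization error for the entropy), Theorem~\ref{tm-con-total} (the strong error of the schemes), and a modulus--of--continuity estimate for the unregularized entropy functional $F(|\cdot|^2)$ on $\mathbb H$. Fixing $k\le N$, I would split the total error as
\begin{align*}
F(|u(t_k)|^2)-F(|u_k^{\epsilon}|^2)
=\big(F(|u(t_k)|^2)-F(|u^{\epsilon}(t_k)|^2)\big)+\big(F(|u^{\epsilon}(t_k)|^2)-F(|u_k^{\epsilon}|^2)\big)=:J_1+J_2 .
\end{align*}
The term $J_1$ is precisely the quantity controlled inside the proof of Corollary~\ref{cor-entropy}: there it is shown that $\|F(|u^{\epsilon}(t_k)|^2)-F(|u^0(t_k)|^2)\|_{L^p(\Omega)}$ is bounded by $C(\epsilon^{1/2}+\epsilon^{\delta/2})$ on a bounded domain and by $C(\epsilon^{\alpha/(2\alpha+d)}+\epsilon^{\delta/2})$ on $\mathbb R^d$, uniformly in time; since $u^0=u$, no new work is required for $J_1$.

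The core of the argument is $J_2$. First I would establish, for arbitrary $v,w\in\mathbb H^1\cap L^2_{\alpha}$, the local Lipschitz bound
\begin{align*}
|F(|v|^2)-F(|w|^2)|
\le \big|\,\|v\|^2-\|w\|^2\,\big|
+C\|v-w\|\Big(\|v+w\|+\|v\|_{L^{2+2\delta}}^{1+\delta}+\|w\|_{L^{2+2\delta}}^{1+\delta}+\|v\|_{L^{2-2\eta}}^{1-\eta}+\|w\|_{L^{2-2\eta}}^{1-\eta}\Big),
\end{align*}
valid for $0<\eta\ll1$ and $\delta(d-2)\le2$. This is obtained exactly as in the proof of Corollary~\ref{cor-entropy}: decompose $\mathcal O$ according to $\{|v|>|w|\}$ and its complement, use the elementary inequalities $a(\log b-\log a)\le b-a$ for $0<a\le b$ and $|\log b|\le C(b^{\delta}+b^{-\eta})$ together with $\big||v|-|w|\big|\le|v-w|$, and then apply H\"older's inequality; the Gagliardo--Nirenberg inequality \eqref{gn-sob} and the weighted interpolation inequality \eqref{wei-sob} convert the $L^{2\pm}$ norms into $\mathbb H^1$ and $L^2_{\alpha}$ norms.

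It then remains to insert, with $v=u^{\epsilon}(t_k)$ and $w=u_k^{\epsilon}$, three ingredients: (i) the $\epsilon$-uniform a priori bounds $\E[\sup_{t\le T}(\|u^{\epsilon}(t)\|_{\mathbb H^1}^q+\|u^{\epsilon}(t)\|_{L^2_{\alpha}}^q)]\le C$ of Lemma~\ref{lm-con}; (ii) the analogous $\epsilon$-uniform a priori bounds $\sup_{k\le N}\E[\|u_k^{\epsilon}\|_{\mathbb H^1}^q+\|u_k^{\epsilon}\|_{L^2_{\alpha}}^q]\le C$ for the splitting schemes, which come from the stability estimates established in the proofs of Propositions~\ref{tm-con-add}--\ref{tm-con-real} (they are $\epsilon$-independent because $|x|^2\big|\frac{\partial f_{\epsilon}(|x|^2)}{\partial|x|}\big|\le C|x|$, a consequence of (A5), so that $\|\Phi_f^t(w)\|_{\mathbb H^1}$, $\|\Phi_{M,\mathcal F_{t_k}}^t(w)\|_{\mathbb H^1}$ and $\|\Phi_{f+g,\mathcal F_{t_k}}^t(w)\|_{\mathbb H^1}$ are controlled by $\|w\|_{\mathbb H^1}$ with $\epsilon$-free constants); and (iii) the strong error bound $\|u^{\epsilon}(t_k)-u_k^{\epsilon}\|_{L^q(\Omega;\mathbb H)}\le C(1+|\log\epsilon|)\tau^{1/2}$ of Propositions~\ref{tm-con-add}--\ref{tm-con-real}. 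Raising $\|v-w\|$ and the a priori factors to conjugate exponents in $\Omega$ via H\"older's inequality turns the local Lipschitz bound into $\|J_2\|_{L^p(\Omega)}\le C(1+|\log\epsilon|^2)\tau^{1/2}$ (the logarithmic factor, possibly squared, being inherited from the strong error and from the $L^{2\pm}$ interpolation factors). Adding the bounds for $J_1$ and $J_2$ yields the asserted estimates.

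The main obstacle is the logarithmic singularity of $F$ at vacuum: the modulus--of--continuity bound for $F(|\cdot|^2)$ must be uniform both where $|v|$ is large (so that $|\log|v|^2|$ is tamed by a power) and where $|v|$ is small (where $\rho\mapsto\rho\log\rho-\rho$ is only H\"older), and simultaneously $\epsilon$-uniform, so that the only residual $\epsilon$-dependence is the explicit $|\log\epsilon|$ carried by the time step. The delicate point is therefore the bookkeeping of the a priori $\mathbb H^1\cap L^2_{\alpha}$ moment bounds for the numerical flows, which must be verified to be genuinely $\epsilon$-independent; once this is secured, the remaining steps are routine interpolation.
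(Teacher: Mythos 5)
Your proposal is correct and coincides with the argument the paper intends: the corollary is stated without a separate proof precisely because it follows, as you argue, by inserting the strong error of Theorem~\ref{tm-con-total} (i.e.\ Propositions~\ref{tm-con-add}--\ref{tm-con-real} combined with Lemma~\ref{lm-con}) into the local Lipschitz-type bound for $F$ derived in the proof of Corollary~\ref{cor-entropy}, together with the $\epsilon$-uniform a priori bounds. The one item you should make explicit is the $L^2_{\alpha}$ bound for the numerical iterates when $\mathcal O=\mathbb R^d$, which is not literally contained in the cited stability estimates (those are stated in $\mathbb H^1$ only) but follows readily because $|\Phi_f^t(w)|=|w|$ (and $|\Phi_{f+g,\mathcal F_{t_k}}^t(w)|=|w|$) pointwise, while the free/noise subflows obey weighted-mass estimates as in Proposition~\ref{prop-evo}.
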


We have known that $H(u^{\epsilon})$ is an approximation of the original energy $H(u)$ in Proposition \ref{reg-ene}. One may expect that the splitting regularized scheme is also an  approximation of the energy functional. However, the analysis is even more intricate than expected. Some new techniques are needed to get the  convergence of the energy for splitting scheme due to loss of the regularity in time and space of the mild solution.
This will be studied in the future.

\section{Structure-preserving regularized finite difference type splitting scheme}
In the section, we will propose several regularized finite difference schemes, including the regularized Crank--Nilcoson scheme and the regularized mid-point scheme, to study the error between the regularized energy and original one. Throughout this section, we assume that there exists a small $\tau_0(T,\lambda,g,Q,u_0,\epsilon)>0$ such that for $\tau<\tau_0$, there exists a numerical solution for the proposed scheme. Indeed, $\tau_0$ will be depending on $\frac 1{\log(|\epsilon|)}$ according to $(A1)$.
Let $\tau<\tau_0$ be the time step size such that $T=N\tau$.  

\subsection{Regularized mid-point scheme}
We present the framework analyzing the properties of finite difference methods for SlogS equations in terms of  the regularized mid-point scheme. This scheme reads
\begin{equation}
\begin{split}
\label{mid-point}
\Phi_{S,\mathcal F_{t_k}}^{\tau}(u_{k}^{\epsilon})&= u_{k}^{\epsilon}+\int_{t_{k}}^{t_{k+1}}\widetilde g(\Phi_{S,\mathcal F_{t_k}}^{s}( u_{k}^{\epsilon}))\star dW(s),\\
u_{k+1}^{\epsilon}&=\Phi_{\Delta+f}^{\tau}(\Phi_{S,\mathcal F_{t_k}}^{\tau}(u_{k}^{\epsilon})):=\Phi_{S,\mathcal F_{t_k}}^{\tau}(u_{k}^{\epsilon})+\mathbf i \Delta \frac {\Phi_{S,\mathcal F_{t_k}}^{\tau}(u_{k}^{\epsilon})+ u_{k+1}^{\epsilon}}2\tau\\
&\quad+\mathbf i \lambda \tau f_{\epsilon}(|\frac { \Phi_{S,\mathcal F_{t_k}}^{\tau}(u_{k}^{\epsilon})+ u_{k+1}^{\epsilon}}2|^2)\frac {\Phi_{S,\mathcal F_{t_k}}^{\tau}(u_{k}^{\epsilon})+u_{k+1}^{\epsilon}}2,
\end{split}
\end{equation}
where $u_k^{\epsilon}$, $k\le N,$ is the numerical solution at $k$th step and $u_0^{\epsilon}=u_0.$

It can be verified that $\Phi_{S,\mathcal F_{t_k}}$ has the analytic solution if one of the following cases holds: ${\rm Case~1.}$ $W(t)\in\mathbb H$ and $\widetilde g=1;$ ${\rm Case~2.}$ $W(t)\in\mathbb H$ and $\widetilde g=\mathbf i x;$ ${\rm Case~3.}$ ${W(t)}\in L^2(\mathcal O;\mathbb R)$ and $\widetilde g(x)=\mathbf ig(|x|^2)x.$
In these three cases, \eqref{mid-point} becomes a numerical scheme. 
Otherwise, some numerical solver $\widetilde \Phi_{S,\mathcal F_{t_k}}^t$ is needed to discretize $\Phi_{S,\mathcal F_{t_k}}^t.$ For example, one may use the Euler method and get 
\begin{align*}
\widetilde \Phi_{S,\mathcal F_{t_k}}^t(u_{k}^{\epsilon}):
=& u_{k}^{\epsilon}-\frac 12 \sum_{i\in \mathbb N^+} |Q^{\frac 12}e_i|^2|g(| u_{k}^{\epsilon})|^2)|^2 u_{k}^{\epsilon} (t-t_k)+\mathbf i \int_{t_{k}}^{t} g(| u_{k}^{\epsilon}|^2)u_{k}^{\epsilon} dW(s)\\
&-\bi \sum_{i\in \mathbb N^+} Im(Q^{\frac 12}e_i)Q^{\frac 12}e_i (t-t_k)g'(|u_{k}^{\epsilon}|^2) g(| u_{k}^{\epsilon}|^2)|u_{k}^{\epsilon}|^2 u_{k}^{\epsilon}.
\end{align*}
For simplicity, let us deal with the case that $\Phi_{S,\mathcal F_{t_k}}^t$ has an analytic solution since the numerical analysis of other discrete scheme with the numerical solver $\widetilde \Phi_{S,\mathcal F_{t_k}}^t$ is similar.

First, we would like to present the structure-preserving properties, including the symplectic structure and the mass evolution law, of \eqref{mid-point}, which is summarized as follows.

\begin{prop}
	Let Assumption \ref{main-reg-fun} and the condition of Theorem \ref{mild-general} hold. Assume that $\{W(t)\}_{t\ge 0}$ is $\mathbb H$-valued, $\widetilde g=1$ or that  $\{W(t)\}_{t\ge 0}$ is $L^2(\mathcal O;\mathbb R)$-valued, $\widetilde g(x)=\mathbf ig(|x|^2)x.$ Then the phase flow of  \eqref{mid-point} preserves the stochastic symplectic structure, i.e., $\bar \omega_{k+1}=\bar \omega_k$. 
	
	Assume that $\widetilde g(x)=\mathbf i x$ or $\widetilde g(x)=1$. Then \eqref{mid-point} preserves the evolution law of the mass of the exact solution, i.e.,
	\begin{align*}
	\E\Big[M(u_{k+1}^{\epsilon})\Big]=\E\Big[M(u_{k}^{\epsilon})\Big]+\tau \sum_{i\in \mathbb N^+}\|Q^{\frac 12}e_i\|^2 \chi_{\{\widetilde g=1\}},
	\end{align*}
	where $\chi_{\{\widetilde g=1\}}=1$ for the additive noise case and $\chi_{\{\widetilde g=1\}}=0$ for the multiplicative noise case.
\end{prop}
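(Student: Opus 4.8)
The plan is to exploit the splitting form of \eqref{mid-point}, $u_{k+1}^{\epsilon}=\Phi_{\Delta+f}^{\tau}\big(\Phi_{S,\mathcal F_{t_k}}^{\tau}(u_k^{\epsilon})\big)$, and to establish both assertions for each of the two substeps separately; the two properties then pass to the composition, for symplecticity because the differential ${\rm d}$ taken with respect to the initial datum commutes with composition (chain rule), so that $\bar\omega_{k+1}=\bar\omega_k$ follows inductively from $\bar\omega_0$, and for the mass because the substeps are applied in the order stochastic-then-deterministic.

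\textbf{Symplecticity.} For the stochastic substep $\Phi_{S,\mathcal F_{t_k}}^{\tau}$ I would repeat the argument already used for the exact phase flow of \eqref{Reg-SlogS}: when $\widetilde g=1$ it is the affine map $v\mapsto v+(W(t_{k+1})-W(t_k))$, whose variational equation is the identity and is therefore symplectic; when $\widetilde g(x)=\mathbf i g(|x|^2)x$ with $W$ being $L^2(\mathcal O;\mathbb R)$-valued it is the exact solution of the stochastic Hamiltonian system driven by $H_{Sto}=\int_{\mathcal O}\int_0^{(P^{\epsilon})^2+(Q^{\epsilon})^2}g(s)\,ds\,dx$, which preserves $\bar\omega$ by the standard argument of \cite{CH16}. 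For $\Phi_{\Delta+f}^{\tau}$ the key point is that $w:=\Phi_{\Delta+f}^{\tau}(v)$ solves the implicit mid-point relation $w=v+\tau\mathbf i\big(\Delta m+\lambda f_{\epsilon}(|m|^2)m\big)$ with $m=\tfrac{v+w}{2}$; differentiating with respect to $v$ gives $\tfrac{\partial w}{\partial v}=\big(I-\tfrac{\tau}{2}\mathbf i L_m\big)^{-1}\big(I+\tfrac{\tau}{2}\mathbf i L_m\big)$, where $L_m$ is the self-adjoint operator obtained by linearizing $u\mapsto\Delta u+\lambda f_{\epsilon}(|u|^2)u$ at $m$ (its zeroth-order part is bounded by (A5), and the invertibility of $I-\tfrac{\tau}{2}\mathbf i L_m$ uses $\tau<\tau_0$). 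Since multiplication by $\mathbf i$ is the canonical complex structure $\mathbb J$ and $L_m$ is symmetric, $\mathbb J L_m$ is a Hamiltonian (infinitesimally symplectic) operator, so its Cayley transform $\tfrac{\partial w}{\partial v}$ is symplectic; composing with $\Phi_{S,\mathcal F_{t_k}}^{\tau}$ then yields $\bar\omega_{k+1}=\bar\omega_k$ a.s.

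\textbf{Mass evolution.} I would first show that $\Phi_{\Delta+f}^{\tau}$ conserves the mass pathwise: with $m=\tfrac{v+w}{2}$ one has $w-v=\tau\mathbf i(\Delta m+\lambda f_{\epsilon}(|m|^2)m)$ and $w+v=2m$, so
\begin{align*}
\|w\|^2-\|v\|^2=\<w+v,\,w-v\>=2\tau\Big(\<m,\mathbf i\Delta m\>+\lambda\<m,\mathbf i f_{\epsilon}(|m|^2)m\>\Big)=0,
\end{align*}
since $\mathbf i\Delta$ and multiplication by $\mathbf i f_{\epsilon}(|m|^2)$ are skew-adjoint ($f_{\epsilon}$ being real-valued) and $\<m,Am\>=0$ for skew-adjoint $A$. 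Hence $\E[M(u_{k+1}^{\epsilon})]=\E[M(\Phi_{S,\mathcal F_{t_k}}^{\tau}(u_k^{\epsilon}))]$, and conditioning on $\mathcal F_{t_k}$ it remains to evaluate $\E\big[M(\Phi_{S,\mathcal F_{t_k}}^{\tau}(v))\mid\mathcal F_{t_k}\big]$ for an $\mathcal F_{t_k}$-measurable $v$. If $\widetilde g(x)=\mathbf i x$, It\^o's formula for $\|\Phi_{S,\mathcal F_{t_k}}^{t}(v)\|^2$ shows that the It\^o correction $\sum_{i}\|\mathbf i\Phi_{S}^{t}(v)Q^{\frac12}e_i\|^2$ exactly cancels the drift produced by the $\star$-term, while the $\mathbf i\Delta$ and $\mathbf i\lambda f_{\epsilon}$ contributions vanish, so $\|\Phi_{S}^{t}(v)\|^2$ is a martingale and the conditional expectation equals $\|v\|^2$. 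If $\widetilde g=1$, then $\Phi_{S,\mathcal F_{t_k}}^{\tau}(v)=v+(W(t_{k+1})-W(t_k))$, and expanding the square, using that the increment has mean zero and is independent of $\mathcal F_{t_k}$ together with $\E\|W(t_{k+1})-W(t_k)\|^2=\tau\sum_{i}\|Q^{\frac12}e_i\|^2$, produces precisely the extra term $\tau\sum_{i}\|Q^{\frac12}e_i\|^2$. Taking full expectations yields the asserted identity with the indicator $\chi_{\{\widetilde g=1\}}$.

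\textbf{Main obstacle.} The delicate part is making the symplecticity of the implicit mid-point substep rigorous in the infinite-dimensional setting: one must interpret the $2$-form $\bar\omega$ and the variational flow ${\rm d}u_k^{\epsilon}$ in a suitable Hilbert space, prove that $w=\Phi_{\Delta+f}^{\tau}(v)$ is well-defined and Fr\'echet differentiable for $\tau<\tau_0$ (this is where the $1/|\log\epsilon|$-type dependence of $\tau_0$ coming from (A1) is used), and justify the Cayley-transform identity at the level of the unbounded self-adjoint operator $L_m$. All remaining steps simply reproduce, substep by substep, the arguments already carried out for the exact solution of \eqref{Reg-SlogS} and for the Lie--Trotter schemes in Section~3.
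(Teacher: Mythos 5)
Your substep-by-substep argument---symplecticity of the stochastic flow exactly as for the exact solution of \eqref{Reg-SlogS}, Cayley-transform symplecticity of the implicit midpoint step, pathwise mass conservation of $\Phi_{\Delta+f}^{\tau}$ via $\|w\|^2-\|v\|^2=\<w+v,w-v\>=0$, and It\^o's formula (resp.\ the independent mean-zero increment) for the stochastic substep---is correct and is precisely the argument the paper intends but omits as standard; in fact the pathwise identity $\|u_{k+1}^{\epsilon}\|=\|\Phi_{S,\mathcal F_{t_k}}^{\tau}(u_{k}^{\epsilon})\|$ you derive appears verbatim inside the paper's proof of Proposition \ref{tm-con-mid}. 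The only caveat, which you already flag as the main obstacle, is that the linearization $L_m$ is only real-linear and symmetric with respect to the real inner product, so the Cayley-transform computation must be interpreted in the real (phase-space) Hilbert-space setting, consistent with how the paper treats symplecticity of the exact flow.
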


Due to the discretization of $e^{\mathbf i \Delta \tau}$ and loss of regularity in both time and space, the {strong} convergence order of \eqref{Crank} is less than that of splitting schemes \eqref{spl-add}-\eqref{spl-mul-rea}.

\begin{prop}\label{tm-con-mid}
	Let Assumption \ref{main-reg-fun} and the condition of Theorem \ref{mild-general} hold.  Assume in addition that $f_{\epsilon}$ satisfies \eqref{con-f}.
	%\begin{align*}
	%|f_{\epsilon}(|x|^2)x-f_{\epsilon}(|y|^2)y|&\le C(1+|\log(\epsilon)|)|x-y|.
	%\end{align*}
	Then the numerical solution of \eqref{mid-point} is strongly convergent to the exact one of Eq. \eqref{Reg-SlogS}. Moreover, for $p\ge2,$  there exists $C(Q,T,\lambda,p,u_0,\widetilde g)>0$ such that
	\begin{align*}
	\sup_{k\le N}\|u^{\epsilon}_k-u^{\epsilon}(t_k)\|_{L^{p}(\Omega;\mathbb H)}&\le C(Q,T,\lambda,p,u_0,\widetilde g)\epsilon^{-1}((1+|\log(\epsilon)|)\tau^{\frac 12}+\tau^{\frac 13}).
	\end{align*}
\end{prop}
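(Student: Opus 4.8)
The plan is to reuse the one-step/telescoping machinery of Propositions~\ref{tm-con-add}--\ref{tm-con-real}, but now the exact group $e^{\mathbf i\Delta t}$ is replaced by the Cayley (mid-point) operator $R_\tau:=(I-\tfrac{\mathbf i\tau}{2}\Delta)^{-1}(I+\tfrac{\mathbf i\tau}{2}\Delta)$, which is unitary on every $\mathbb H^{s}$, and one must control the additional consistency error this substitution produces. Eliminating $u^\epsilon_{k+1}$ from \eqref{mid-point} shows that $\Phi^{\tau}_{\Delta+f}$ has the variation-of-constants form $v\mapsto R_\tau v+\mathbf i\lambda\tau\,(I-\tfrac{\mathbf i\tau}{2}\Delta)^{-1}f_{\epsilon}(|\hat v|^2)\hat v$ with the mid-point state $\hat v=\tfrac12(v+\Phi^{\tau}_{\Delta+f}(v))$ defined implicitly. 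For $\tau<\tau_0$ with $\tau_0\sim 1/|\log\epsilon|$ this implicit relation is a contraction on $\mathbb H$, since by \eqref{con-f} the map $v\mapsto f_\epsilon(|v|^2)v$ is globally Lipschitz with constant $C(1+|\log\epsilon|)$, giving existence, uniqueness and a local stability estimate for $\Phi^\tau_{\Delta+f}$. Testing \eqref{mid-point} against $u^\epsilon_{k+1}$ and against $\Delta u^\epsilon_{k+1}$, using $\langle\mathbf i\Delta\hat v,\hat v\rangle=0$, that the imaginary part of the mid-point nonlinear term is controlled by (A2), It\^o's formula together with the Burkholder inequality for $\Phi^\tau_{S,\mathcal F_{t_k}}$ (assumed to have an analytic solution, as in the discussion preceding the proposition), and the conditions on $\widetilde g$ in Assumption~\ref{main-as}, one obtains $\sup_{k\le N}(\|u^\epsilon_k\|_{L^q(\Omega;\mathbb H)}+\|u^\epsilon_k\|_{L^q(\Omega;\mathbb H^1)})\le C$ for all $q\ge2$; differentiating the nonlinearity once in space and invoking (A5), which yields $|\nabla f_\epsilon(|u|^2)|\le C|u||\nabla u|/(\epsilon+|u|^2)\le C|\nabla u|/\sqrt{\epsilon}$, gives in addition a regularity-loss bound $\sup_{k\le N}\|u^\epsilon_k\|_{L^q(\Omega;\mathbb H^2)}\le C\epsilon^{-1}$. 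This last estimate is the only source of the $\epsilon^{-1}$ prefactor; the analogous bounds hold for the exact regularized flow by Lemma~\ref{lm-con}.

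Next I would analyse the local defect on a generic subinterval $[t_k,t_{k+1}]$. Writing the exact regularized flow $\Psi^\tau_k$ in mild form and the one-step numerical map $\Phi^\tau_k$ (the $t\to\tau$ limit of the scheme with $\Phi^s_{S,\mathcal F_{t_k}}$ inserted) likewise, I decompose $\Psi^\tau_k(v)-\Phi^\tau_k(v)$, with $v=u^\epsilon(t_k)$, into three pieces: (i) the linear defect $(e^{\mathbf i\Delta\tau}-R_\tau)v$, estimated through the spectral bound $|e^{\mathbf i\theta}-\tfrac{1+\mathbf i\theta/2}{1-\mathbf i\theta/2}|\le C\min(|\theta|^{3},1)\le C|\theta|^{3s}$, $s\in[0,1]$, so that $\|(e^{\mathbf i\Delta\tau}-R_\tau)w\|\le C\tau^{3s}\|w\|_{\mathbb H^{6s}}$, and interpolating the uniform $\mathbb H^1$ bound against the $\epsilon^{-1}$-sized $\mathbb H^2$ bound of $v$ and of the discrete-mild-form forcings produces, after propagation and summation, the $\epsilon^{-1}\tau^{1/3}$ term; (ii) the nonlinear defect between $\int_0^\tau e^{\mathbf i\Delta(\tau-s)}f_\epsilon(|\Psi^s_k v|^2)\Psi^s_k v\,ds$ and $\tau\,(I-\tfrac{\mathbf i\tau}{2}\Delta)^{-1}f_\epsilon(|\hat v|^2)\hat v$, handled by \eqref{con-f} (factor $1+|\log\epsilon|$), the $\tfrac12$-H\"older-in-time regularity of $\Psi^\cdot_k v$ from Lemma~\ref{lm-con}, and $\|(I-e^{\mathbf i\Delta s})w\|\le C\sqrt{s}\,\|w\|_{\mathbb H^1}$, yielding the $(1+|\log\epsilon|)\tau^{1/2}$ term; and (iii) the stochastic defect coming from the fact that $\Phi^\tau_{S,\mathcal F_{t_k}}$ performs the stochastic integral without the inner semigroup $e^{\mathbf i\Delta(\tau-s)}$, controlled by the It\^o isometry/Burkholder inequality together with $\|(I-e^{\mathbf i\Delta s})w\|\le C\sqrt{s}\|w\|_{\mathbb H^1}$ and the growth and one-sided conditions on $\widetilde g$, which is of higher order. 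Summing the three, $\|\Psi^\tau_k(v)-\Phi^\tau_k(v)\|_{L^p(\Omega;\mathbb H)}\le C\epsilon^{-1}((1+|\log\epsilon|)\tau^{3/2}+\tau^{4/3})(1+\|v\|_{L^p(\Omega;\mathbb H^2)})$.

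Then I would prove $\epsilon$-\emph{independent} stability of $\Phi^\tau_k$, namely $\|\Phi^\tau_k(v)-\Phi^\tau_k(w)\|_{L^p(\Omega;\mathbb H)}\le (1+C\tau)\|v-w\|_{L^p(\Omega;\mathbb H)}$, using unitarity of $R_\tau$, condition (A2) for the mid-point $f_\epsilon$-term (the implicitness absorbed via $\|\hat v-\hat w\|\le\tfrac12\|\Phi^\tau_{S}(v)-\Phi^\tau_{S}(w)\|+\tfrac12\|\Phi^\tau_k(v)-\Phi^\tau_k(w)\|$), and \eqref{con-g1}--\eqref{con-g} plus the growth bounds for the $\Phi^\tau_S$-part; the stability of $\Psi^\tau_k$ in $\mathbb H$ and $\mathbb H^1$ is available from Theorem~\ref{mild-general} and Lemma~\ref{lm-con}. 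Telescoping
\[
u^\epsilon(t_N)-u^\epsilon_N=\sum_{k=0}^{N-1}\Big(\prod_{j=k+1}^{N-1}\Psi^\tau_j\Big)\big(\Psi^\tau_k-\Phi^\tau_k\big)\Big(\prod_{j=0}^{k-1}\Phi^\tau_j(u_0)\Big),
\]
inserting the local defect from the previous step evaluated at the numerical solution (whose $\mathbb H^2$-moments are $O(\epsilon^{-1})$), together with the stability estimates, and summing the $N=T/\tau$ terms gives $\sup_{k\le N}\|u^\epsilon_k-u^\epsilon(t_k)\|_{L^p(\Omega;\mathbb H)}\le C\epsilon^{-1}((1+|\log\epsilon|)\tau^{1/2}+\tau^{1/3})$; the Gronwall step uses only the $\epsilon$-free stability constant. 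For general $p\ge2$ one runs the same argument with $\|\cdot\|^p$ and $p$-th moments throughout, exactly as in the proofs of Propositions~\ref{tm-con-add}--\ref{tm-con-real}.

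The hard part is item~(i): the mid-point/Cayley discretization of $\mathbf i\Delta$ has a local defect that is only $O(\tau)$ on $\mathbb H^2$ data, while a Taylor expansion would require $\mathbb H^{\ge3}$ regularity, which is unavailable even for the regularized solution, so the order cannot be read off directly; one has to extract the fractional rate $\tau^{1/3}$ from $|e^{\mathbf i\theta}-R_\tau(\theta)|\le C\min(|\theta|^{3},1)$ by balancing the interpolation exponent against the $\epsilon^{-1}$-sized $\mathbb H^2$ a priori bound, and this must be done simultaneously with the implicit, stochastic structure of the scheme. The second delicate point is keeping the $\epsilon$-dependence confined to the single polynomial factor $\epsilon^{-1}$ rather than letting it enter the Gronwall exponent, which is exactly why the stability estimate must rely on the $\epsilon$-free one-sided conditions (A2) and \eqref{con-g1}--\eqref{con-g} and not on the $|\log\epsilon|$-Lipschitz bound \eqref{con-f}.
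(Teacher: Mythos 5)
Your overall architecture (one-step defect plus stability telescoping, as in Propositions \ref{tm-con-add}--\ref{tm-con-real}) is not the paper's route, and in this instance the difference is not cosmetic: two of your key ingredients are unavailable or do not deliver the claimed rate. First, your argument leans on a priori bounds $\sup_{k\le N}\|u^{\epsilon}_k\|_{L^q(\Omega;\mathbb H^2)}\le C\epsilon^{-1}$ and ``analogous bounds for the exact regularized flow by Lemma \ref{lm-con}''. Lemma \ref{lm-con} gives only uniform $\mathbb H^1\cap L^2_{\alpha}$ moments; no $\mathbb H^2$ estimate is available under the hypotheses of the proposition (the data are only $u_0\in\mathbb H^1\cap L^2_{\alpha}$ and $Q^{\frac12}e_i\in\mathbb H^1\cap W^{1,\infty}$), and the paper deliberately avoids any second-derivative bound. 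Second, and more fundamentally, the local Cayley-versus-semigroup defect cannot be summed the way you propose: your own spectral bound gives $\|(e^{\bi\Delta\tau}-S_\tau)w\|\le C\tau^{3s}\|w\|_{\mathbb H^{6s}}$, so on $\mathbb H^2$ data the one-step linear defect is $O(\tau)\|w\|_{\mathbb H^2}$, and adding $N=T/\tau$ such contributions yields $O(\epsilon^{-1})$ with no decay in $\tau$; your stated local defect $\epsilon^{-1}\tau^{4/3}(1+\|v\|_{\mathbb H^2})$ contradicts that same spectral bound (it would require $\mathbb H^{8/3}$ data), and even if granted, multiplying by the $O(\epsilon^{-1})$ $\mathbb H^2$ moment would give a global prefactor $\epsilon^{-2}$, not the claimed $\epsilon^{-1}$.

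What the paper does instead is to exploit the \emph{global} rational-approximation estimate $\|e^{\bi k\Delta\tau}-S_\tau^{k}\|_{\mathcal L(\mathbb H^1,\mathbb H)}\le C(T)\tau^{\frac13}$ (together with $\|S_\tau-I\|_{\mathcal L(\mathbb H^1,\mathbb H)}+\|T_\tau-I\|_{\mathcal L(\mathbb H^1,\mathbb H)}\le C\tau^{\frac12}$), proved by Fourier transform and Plancherel as in the appendix of \cite{BD06}; this encodes the accumulation of the linear defect directly on $\mathbb H^1$ data, which is exactly why only the uniform-in-$\epsilon$ $\mathbb H^1$ bound is needed. That bound is obtained by testing \eqref{mid-point} against $u^{\epsilon}_{k+\frac12}$ and $\Delta u^{\epsilon}_{k+\frac12}$ and using (A5), much as you sketch. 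The error analysis is then carried out not step by step but through a continuous auxiliary process $\widehat u$ that interpolates the scheme (stochastic flow on each subinterval, discrete propagator $S_\tau$, $T_\tau$ at grid points), whose mild form is compared with that of $u^{\epsilon}$ over $[0,T]$; the resulting terms $II_1,\dots,II_7$ are handled with the $\tau^{1/3}$ operator estimate, the midpoint continuity estimate $\|\widehat u(s)-\widehat u_{[s]+\frac12}\|_{L^p(\Omega;\mathbb H)}\le C\tau^{\frac12}+C|\log\epsilon|\tau$, the Lipschitz bound \eqref{con-f}, and Burkholder, before a single Gronwall step. If you want to salvage your telescoping scheme you would need either genuine $\mathbb H^{>2}$ regularity (not available) or a summation-with-cancellation argument for the linear defects that effectively reproduces the global $\mathcal L(\mathbb H^1,\mathbb H)$ estimate, at which point you are back to the paper's approach.
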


\begin{proof}
	We only give the details for the multiplicative noise case since the proof of the additive noise case is analogous.  
	First, we show the uniform boundedness in $\mathbb H^1$ of $u^{\epsilon}_{k+1}$. 
	Multiplying $ u^{\epsilon}_{k+\frac 12}=\frac {\Phi_{S,\mathcal F_{t_k}}^{\tau}(u_{k}^{\epsilon})+ u^{\epsilon}_{k+1}}2$ on the second equation of $ u^{\epsilon}_{k+1}$ in \eqref{mid-point}, using integration by parts and the fact that  $f_{\epsilon}$ is real-valued, we have that $
	\|u^{\epsilon}_{k+1}\|^2=\|\Phi_{S,\mathcal F_{t_k}}^{\tau}(u_{k}^{\epsilon})\|^2.$ Then from the Burkholder inequality, the growth condition on $g$ and Gronwall's inequality, it follows that 
	\begin{align*}
	\|u^{\epsilon}_{k+1}\|_{L^p(\Omega;\mathbb H)}&\le \exp(C\tau) \|\Phi_{S,\mathcal F_{t_k}}^{\tau}(u_{k}^{\epsilon})\|_{L^p(\Omega;\mathbb H)}\le  \exp(C\tau) \|u^{\epsilon}_{k}\|_{L^p(\Omega;\mathbb H)}.
	\end{align*}
	Similarly, multiplying $\Delta  u^{\epsilon}_{k+\frac 12}$ on the equation of $u^{\epsilon}_{k+1}$ in \eqref{mid-point}, repeating the above procedures and using (A5), we get that
	\begin{align*}
	\|\nabla u^{\epsilon}_{k+1}\|^2&\le \|\nabla \Phi_{S,\mathcal F_{t_k}}^{\tau}(u_{k}^{\epsilon})\|^2+C\tau\|\nabla u^{\epsilon}_{k+\frac 12}\|^2
	\le \frac {1+ C\tau}{1-C\tau} \|\nabla \Phi_{S,\mathcal F_{t_k}}^{\tau}(u_{k}^{\epsilon})\|^2\\
	& \le \exp(C\tau) \|\nabla \Phi_{S,\mathcal F_{t_k}}^{\tau}(u_{k}^{\epsilon})\|^2,\\
	\| \Phi_{S,\mathcal F_{t_k}}^{\tau}(u_{k}^{\epsilon})\|_{L^p(\Omega;\mathbb H^1)}&\le \exp(C\tau) \| u^{\epsilon}_{k}\|_{L^p(\Omega;\mathbb H^1)}.
	\end{align*} 
	Combining the above estimates, we conclude that 
	\begin{align*}
	\sup_{k\le N}\|u^{\epsilon}_{k}\|_{L^p(\Omega;\mathbb H^1)}+
	\sup_{k\le N-1}\sup_{t\in [0,\tau]}\|\Phi_{S,\mathcal F_{t_k}}^{t}(u_{k}^{\epsilon})\|_{L^p(\Omega;\mathbb H^1)}&\le C(Q,T,\lambda,p,u_0,\widetilde g).
	\end{align*}
	Now we are in a position to present the error estimate of \eqref{mid-point}.
	We introduce the following auxiliary process $\widehat u$,
	\begin{align*}
	\widehat u(t)&= \Phi_{S,\mathcal F_{t_k}}^{t-t_k}\prod_{i=0}^{k-1}(\Phi_{\Delta+f}^{\tau} \Phi_{{S,\mathcal F_{t_i}}}^{\tau}) u_0=\Phi_{S,\mathcal F_{t_k}}^{t-t_k} u_{k}^{\epsilon}, \;\text{if}\; t\in [t_k,t_{k+1}), \; k\le N-1,\\
	\widehat u(t_{k+1})&= \Phi_{\Delta+f}^{\tau}\lim_{t\to t_{k+1}} \widehat u(t)=u^{\epsilon}_{k+1},\; \widehat u(0)=u_0.
	\end{align*} 
	According to the definition, it is not hard to see that $\widehat u$ is right-continuous with left limit, and thus a predictable process.
	Then the mild form of $\widehat u(t)$ is 
	\begin{align*}
	\widehat u(t)
	%=& \widehat u({t_k}) +\int_{t_k}^t\Big(-\frac 12 (g(|\widehat u(s)|^2))^2\widehat u(s)\sum_{i}|Q^{\frac 12}e_i|^2 \\
	%&- \bi g(|\widehat u(s)|^2)g'(|\widehat u(s)|^2)|\widehat u(s)|^2\widehat u(s) \sum_{i} Im(Q^{\frac 12}e_i) Q^{\frac 12}e_i\Big)ds\\
	%&+\int_{t_k}^{t}\mathbf ig(|\widehat u(s)|^2)\widehat u(s)dW(s)\\
	=&S_{\tau}^k u_0 +\int_0^tS_{k,t}(s)\Big(-\frac 12 (g(|\widehat u(s)|^2))^2\widehat u(s)\sum_{i}|Q^{\frac 12}e_i|^2 \\
	&- \bi g(|\widehat u(s)|^2)g'(|\widehat u(s)|^2)|\widehat u(s)|^2\widehat u(s) \sum_{i} Im(Q^{\frac 12}e_i) Q^{\frac 12}e_i\Big)ds\\
	&+\int_0^tS_{k,t}(s)\bi g\big(|\widehat u(s)|^2)\widehat u(s)dW(s)+\mathbf i \lambda \int_{0}^{t_k}S_{k,t}(s)T_{\tau} f_{\epsilon}(|\widehat u_{[s]+\frac 12}|^2)\widehat u_{[s]+\frac 12}ds,
	\end{align*}
	where $S_{\tau}=\frac {I+\frac 12\bi \tau\Delta }{I-\frac 12\bi \tau \Delta}$, $T_{\tau}=\frac 1{I-\frac 12\bi \tau \Delta}$ and $S_{k,t}(s)=\sum\limits_{j=1}^{k}\chi_{[t_{k-j},t_{k+1-j}]}(s) S_{\tau}^j(s) +\chi_{[t_{k,}t]}(s)$, $[s]$ is integer part of $\frac {s}{\tau}$ and $\widehat u_{[s]+\frac 12}=\frac {\Phi_{S,\mathcal F_{[s]\tau}}(\widehat u([s]\tau)+\widehat u(([s]+1)\tau)} 2.$
	Now, we get {\small
		\begin{align*}
		u^{\epsilon}(t)-\widehat u(t)=&(e^{\mathbf i \Delta t}-S_{\tau}^{k})u_0+\bi \lambda  \int_0^{t_k} S_{k,t}(s)T_{\tau}\Big(f_{\epsilon}(|u^{\epsilon}(s)|^2) u^{\epsilon}(s)- f_{\epsilon}(|\widehat u_{[s]+\frac 12}|^2)\widehat u_{[s]+\frac 12}\Big)ds\\
		&-\frac 12 \int_0^tS_{k,t}(s)(II_{mod}(u^{\epsilon}(s))-II_{mod}(\widehat u(s)))ds\\
		&-\int_0^t (e^{\mathbf i\Delta (t-s)}-S_{k,t}(s))II_{mod}(u^{\epsilon}{(s)})ds\\
		&+\bi \lambda  \int_0^t (e^{\mathbf i\Delta (t-s)}-S_{k,t}(s) T_{\tau})f_{\epsilon}(|u^{\epsilon}(s)|^2) u^{\epsilon}(s)ds\\
		&+\bi\int_0^tS_{k,t}(s)\Big(g(|u^{\epsilon}(s)|^2)u^{\epsilon}(s)-g(|\widehat u(s)|^2)\widehat u(s)\Big)dW(s)\\
		&+\bi\int_0^t(e^{\mathbf i\Delta (t-s)}-S_{k,t}(s))\Big(g(|u^{\epsilon}(s)|^2)u^{\epsilon}(s)\Big)dW(s)\\
		=:&II_1+II_2+II_3+II_4+II_5+II_6+II_7.
		\end{align*}
		B}y means of Fourier transform and Plancherel's equality (see e.g. \cite[Appendix]{BD06}), it holds that 
	\begin{align*}
	\|e^{\mathbf i k\Delta{\tau}}-S_{\tau}^{k}\|_{\mathcal L(\mathbb H^1,\mathbb H)}&\le C(T)\tau^{\frac 13},\; \|S_{\tau}-I\|_{\mathcal L(\mathbb H^1,\mathbb H)}\le C\tau^{\frac 12},\;
	\|T_{\tau}-I\|_{\mathcal L(\mathbb H^1,\mathbb H)}\le C\tau^{\frac 12}.
	\end{align*}
	Thus, we have 
	$\|II_1\|\le C\tau^{\frac 13}\|u_0\|_{\mathbb H^1}.$
	To bound $II_2$, we adopt the continuity estimate of $\widehat u(s)$ on each small interval. 
	More precisely, for $s\in[t_{j-1},t_j]$, $j\le k$, {\small
		\begin{align*}
		&\|\widehat u(s)-\widehat u_{[s]+\frac 12}\|
		\le \frac 12\|\widehat u(s)-\Phi_{S,\mathcal F_{j-1}}^{\tau}(u_{j-1}^{\epsilon})\|+\frac 12\|\widehat u(s)-u_{j}^{\epsilon}\|\\
		\le& \frac 12\|\int_{t_{j-1}}^sII_{mod}(\widehat u(s))ds\|+\frac 12\|\int_{t_{j-1}}^s g(|\widehat u(s)|^2) \widehat u(s)dW(s)\|+\frac 12\tau|\lambda|\|f_{\epsilon}(|\widehat u_{j-\frac 12}|^2)\widehat u_{j-\frac 12}\|\\
		&+ \frac 12\|(S_{\tau}-I)u_{j-1}^{\epsilon}\|+\frac 12\|\int_{s}^{t_j}II_{mod}(\widehat u(s))ds\|+\frac 12\|\int_{s}^{t_j} g(|\widehat u(s)|^2) \widehat u(s)dW(s)\|.
		\end{align*}
		T}aking $p$th moment, using the growth condition of $f$ and $g$, as well as a priori estimate of $\widehat u$ in $\mathbb H^1$,
	we obtain 
	$
	\|\widehat u(s)-\widehat u_{[s]+\frac 12}\|_{L^p(\Omega;\mathbb H)}\le C\tau^{\frac 12}+C\log(|\epsilon|)\tau.
	$
	Then the boundedness of $S_{k,t}$ and the property \eqref{con-f} of $f_{\epsilon}$ lead to 
	\begin{align*}
	&\|II_2\|_{L^p(\Omega;\mathbb H)}\\
	&\le  C\int_0^t \Big\|f_{\epsilon}(|u^{\epsilon}(s)|^2) u^{\epsilon}(s)-f_{\epsilon}(|\widehat u(s)|^2)\widehat u(s)\Big\|_{L^p(\Omega;\mathbb H)}ds+C(\tau^{\frac 12}+\log(|\epsilon|)\tau)\\
	&\le C\int_0^t (1+|\log(\epsilon)|)\|u^{\epsilon}(s)-\widehat u(s)\|_{L^p(\Omega;\mathbb H)}ds+C(\tau^{\frac 12}+\log(|\epsilon|)\tau).
	\end{align*}
	Based on the assumption on $g,$ we arrive at 
	\begin{align*}
	\|II_3\|
	&\le C\sum_{i}\|Q^{\frac 12}e_i\|_{L^{\infty}}^2\int_0^t\|u^{\epsilon}(s)-\widehat u(s)\|ds.
	\end{align*}
	By using the continuity of $e^{\bi\Delta}$ and Lemma \ref{lm-con}, we have that 
	\begin{align*}
	\|II_4\|+\|II_5\|%&\le 
	%C\tau^{\frac 13}\int_0^t \Big\|f_{\epsilon}(|u^{\epsilon}(s)|^2) u^{\epsilon}(s)\Big\|_{\mathbb H^1}ds+C\tau^{\frac 13}\int_0^t \Big\|II_{mod}(u^{\epsilon}(s))\Big\|_{\mathbb H^1}ds\\
	&\le C\tau^{\frac 13}\int_0^t (1+|\log(\epsilon)|)\|u^{\epsilon}\|_{\mathbb H^1}ds\\
	&\quad+C\tau^{\frac 13}\int_0^t \Big(\sum_{i}\|Q^{\frac 12}e_i\|_{L^{\infty}}^2+\sum_{i}\|\nabla Q^{\frac 12}e_i\|_{L^{\infty}}^2\Big)\|u^{\epsilon}\|_{\mathbb H^1}ds.
	\end{align*}
	Applying the Burkerholder inequality and the growth condition of $g$, we obtain that 
	\begin{align*}
	\E\Big[\sup_{s\in[0,t]}\|II_6\|^p\Big]
	&\le C\E\Big[\Big(\int_0^t\sum_{i}\Big\|(g(|u^{\epsilon}(s)|^2))u^{\epsilon}(s)-g(|\widehat u(s)|^2)\widehat u(s)) Q^{\frac 12}e_i\Big\|^2ds\Big)^{\frac p2}\Big]\\
	&\le C(\sum_{i}\|Q^{\frac 12}e_i\|^2_{L^{\infty}})^{\frac p2}\int_0^t\E\Big[\|u^{\epsilon}(s)-\widehat u(s)\|^p\Big]ds.
	\end{align*}
	and 
	\begin{align*}
	\E\Big[\sup_{s\in[0,t]}\|II_7\|^p\Big]
	&\le \tau^{\frac p3}C(\sum_{i}\|Q^{\frac 12}e_i\|^2_{L^{\infty}}+\|\nabla Q^{\frac 12}e_i\|^2_{L^{\infty}})^{\frac p2}\int_0^t\E\Big[\|u^{\epsilon}\|_{\mathbb H^1}^p\Big]ds.
	\end{align*}
	Now taking $p\ge 2$, we get 
	$
	\E\Big[\sup\limits_{s\in[0,t]}\|u^{\epsilon}(s)-\widehat u(s)\|^p\Big]
	\le C\sum_{j=1}^7\E\Big[\sup\limits_{s\in[0,t]}\|II_j\|^p\Big].
	$
	We complete the proof by combining the above estimates.
\end{proof}

Combining  Lemma \ref{lm-con} with Proposition \ref{tm-con-mid}, we obtain the following theorem.

\begin{tm}\label{cor-con-total}
	Let the condition of Proposition \ref{tm-con-mid} hold.
	Then the numerical solution of \eqref{mid-point} is strongly convergent to the exact solution of Eq. \eqref{SlogS}. Moreover, for $p\ge2$ and $\delta\in \big(0, \max(\frac 2{\max(d-2,0)},1)\big),$  there exist $C(Q,T,\lambda,p,u_0,\widetilde g)>0$ and $C(Q,T,\lambda,p,u_0,\alpha,\widetilde g)>0$ such that when $\mathcal O$ is a bounded domain,
	\begin{align*}
	{\sup_{k\le N}}\|u^{\epsilon}_k-u(t_k)\|_{L^{p}(\Omega;\mathbb H)}&\le  C(Q,T,\lambda,p,u_0,\widetilde g)( \epsilon^{-1}((1+|\log(\epsilon)|)\tau^{\frac 12}+\tau^{\frac 13})+\epsilon^{\frac 12}+\epsilon^{\frac {{ \delta}}2}),
	\end{align*}
	and when $\mathcal O=\mathbb R^d,$
	\begin{align*}
	{\sup_{k\le N}}\|u^{\epsilon}_k-u(t_k)\|_{L^{p}(\Omega;\mathbb H)}&\le  C(Q,T,\lambda,p,u_0,\alpha,\widetilde g)(\epsilon^{-1}((1+|\log(\epsilon)|)\tau^{\frac 12}+\tau^{\frac 13})+\epsilon^{\frac {\alpha}{2\alpha+d}}+\epsilon^{\frac {{ \delta}}2}).
	\end{align*}
\end{tm}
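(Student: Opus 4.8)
The plan is to obtain Theorem~\ref{cor-con-total} by combining the two error estimates already established, via a triangle inequality in $L^p(\Omega;\mathbb H)$. Write $u^0:=u$ for the mild solution of Eq.~\eqref{SlogS} and $u^\epsilon$ for the mild solution of Eq.~\eqref{Reg-SlogS}. For each $k\le N$ we split
\begin{align*}
\|u^{\epsilon}_k-u(t_k)\|_{L^{p}(\Omega;\mathbb H)}\le \|u^{\epsilon}_k-u^{\epsilon}(t_k)\|_{L^{p}(\Omega;\mathbb H)}+\|u^{\epsilon}(t_k)-u^{0}(t_k)\|_{L^{p}(\Omega;\mathbb H)},
\end{align*}
so that the total error is the sum of a discretization error (scheme versus regularized equation) and a regularization error (regularized equation versus original equation).

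For the first term I would invoke Proposition~\ref{tm-con-mid}, whose hypotheses ($f_\epsilon$ satisfying \eqref{con-f}, Assumption~\ref{main-reg-fun}, and the condition of Theorem~\ref{mild-general}) are exactly those assumed here; it yields, uniformly in $k\le N$, the bound $C(Q,T,\lambda,p,u_0,\widetilde g)\,\epsilon^{-1}\big((1+|\log(\epsilon)|)\tau^{\frac 12}+\tau^{\frac 13}\big)$.

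For the second term I would invoke Lemma~\ref{lm-con}. Taking $p$th roots of the bounds on $\E\big[\sup_{t\in[0,T]}\|u^0(t)-u^{\epsilon}(t)\|^p\big]$ there --- which hold precisely for $\delta\in\big(0,\max(\tfrac2{\max(d-2,0)},1)\big)$, the range assumed in the statement --- gives $\sup_{k\le N}\|u^{\epsilon}(t_k)-u^0(t_k)\|_{L^p(\Omega;\mathbb H)}\le C(\epsilon^{\frac12}+\epsilon^{\frac{\delta}2})$ when $\mathcal O$ is a bounded domain and $\le C(\epsilon^{\frac{\alpha}{2\alpha+d}}+\epsilon^{\frac{\delta}2})$ when $\mathcal O=\mathbb R^d$. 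Summing the two contributions and relabelling the constant produces the two displayed inequalities.

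Since both ingredients are already in hand, there is no real obstacle; the only mild care needed is bookkeeping --- checking that the hypotheses of Proposition~\ref{tm-con-mid} and of Lemma~\ref{lm-con} are simultaneously in force (they are, since ``the condition of Theorem~\ref{mild-general}'' already encodes $u_0\in\mathbb H^1\cap L^2_\alpha$ together with the required summability of the $Q^{\frac12}e_i$), and tracking the constant's dependence on the data. One may additionally remark that $\epsilon$ is a free parameter, so balancing $\epsilon^{-1}\tau^{1/2}$ against $\epsilon^{1/2}$ (bounded $\mathcal O$) or against $\epsilon^{\alpha/(2\alpha+d)}$ (whole space) suggests a near-optimal choice of $\epsilon$ as a small power of $\tau$; but no such optimization enters the proof, which holds for every fixed $\epsilon\in(0,1]$.
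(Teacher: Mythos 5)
Your proposal is correct and is exactly the paper's argument: the paper obtains Theorem \ref{cor-con-total} by the same triangle-inequality decomposition, combining the regularization error of Lemma \ref{lm-con} (after taking $p$th roots) with the temporal discretization error of Proposition \ref{tm-con-mid}, whose hypotheses are indeed simultaneously in force. Your added remark about balancing $\epsilon$ against $\tau$ is a sensible observation but, as you note, plays no role in the proof itself.
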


\subsection{Regularized Crank--Nicolson scheme}
Based on the study of the regularized mid-point scheme,{ we are in a position to study the properties of the regularized Crank--Nicolson scheme and show that it is a good approximation of the energy of Eq. \eqref{SlogS}. }
The regularized splitting Crank--Nicolson type scheme reads 
\begin{align}\label{Crank}
\Phi_{S,\mathcal F_{t_k}}^{\tau}(u_{k}^{\epsilon})&:= u_{k}^{\epsilon}+\int_{t_{k}}^{t_{k+1}}\widetilde g(\Phi_{S,\mathcal F_{t_k}}^{s}( u_{k}^{\epsilon}))\star d W(s), \\\nonumber 
u_{k+1}^{\epsilon}&=\Phi_{\Delta+f}^{\tau}(\Phi_{S,\mathcal F_{t_k}}^{\tau}(u_{k}^{\epsilon})):=\Phi_{S,\mathcal F_{t_k}}^{\tau}(u_{k}^{\epsilon})+\mathbf i \Delta \frac {\Phi_{S,\mathcal F_{t_k}}^{\tau}(u_{k}^{\epsilon})+ u_{k+1}^{\epsilon}}2\tau\\\nonumber 
&+\mathbf i \lambda \tau\int_0^1 f_{\epsilon}(\theta|\Phi_{S,\mathcal F_{t_k}}^{\tau} (u_{k}^{\epsilon})|^2+(1-\theta)| u_{k+1}^{\epsilon}|^2)d\theta  \frac {\Phi_{S,\mathcal F_{t_k}}^{\tau}(u_{k}^{\epsilon})+u_{k+1}^{\epsilon}}2,\nonumber
\end{align}
where $u_k^{\epsilon}$, $k\le N,$ is the numerical solution at the $k$th step and $u_0^{\epsilon}=u_0.$
By the chain rule, one can verify that
\begin{align*}
&\int_0^1 f_{\epsilon}(\theta|\Phi_{S,\mathcal F_{t_k}}^{\tau} (u_{k}^{\epsilon})|^2+(1-\theta)| u_{k+1}^{\epsilon}|^2) d\theta \frac {\Phi_{S,\mathcal F_{t_k}}^{\tau}(u_{k}^{\epsilon})+u_{k+1}^{\epsilon}}2\\
=&\frac {\widetilde F_{\epsilon}(| u_{k+1}^{\epsilon}|^2)-\widetilde  F_{\epsilon}(| \Phi_{S,\mathcal F_{t_k}}^{\tau}(u_{k}^{\epsilon})|^2)}{|u_{k+1}^{\epsilon}|^2-|\Phi_{S,\mathcal F_{t_k}}^{\tau}(u_{k}^{\epsilon}) |^2} \frac {\Phi_{S,\mathcal F_{t_k}}^{\tau}(u_{k}^{\epsilon})+ u_{k+1}^{\epsilon}}2,
\end{align*}
where $\widetilde F_{\epsilon}$ is the integrand in the regularized  entropy $F_{\epsilon}.$

In the following, we focus on the uniform a priori estimate of \eqref{Crank} when $\widetilde g(x)=\mathbf i x$ and $\mathcal O$ is bounded.  For convenience, assume that $u_0$ is a deterministic function. We will study the convergence of \eqref{Crank} for general diffusion coefficient case in the future.

\begin{prop}\label{prop-con-crank}
	Let the condition of  Proposition \ref{tm-con-mid} hold,  $\widetilde g(x)=\mathbf i x$, $\mathcal O$ be a bounded domain. 
	Then the modified energy of the numerical solution of the Crank--Nicolson type method \eqref{Crank} is well-defined. 
	Moreover, for $p\ge2,$ there exists $C(Q,T,\lambda,$ $p,u_0)>0$ such that
	\begin{align*}
	\sup_{k\le N}\|\nabla u^{\epsilon}_k\|_{L^{p}(\Omega;\mathbb H)}&\le C(Q,T,\lambda,p,u_0).
	\end{align*}
\end{prop}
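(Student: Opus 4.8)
The plan is to use that the Crank--Nicolson update $\Phi_{\Delta+f}^{\tau}$ in \eqref{Crank} is an \emph{exact} discrete-gradient integrator for the regularized Hamiltonian, hence conserves $H_{\epsilon}$ (and $\|\cdot\|$), while the stochastic substep $\Phi_{S,\mathcal F_{t_k}}^{\tau}$ is handled by an It\^o--Gronwall estimate on the regularized energy. The passage between $H_{\epsilon}$ and $\|\nabla\cdot\|^2$ is done with the same $\epsilon$-independent Gagliardo--Nirenberg machinery as in Lemma \ref{lm-mod-en}. Throughout, $u_k^{\epsilon}\in\mathbb H^1$ is assumed (existence of the numerical solution is postulated in the section preamble), so all quantities below make sense.

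\textbf{Step 1: a coercive functional.} Arguing as in the proof of Lemma \ref{lm-mod-en} on a bounded domain, $(A4)$, the pointwise bound $t|\log t|\lesssim t^{1-\eta}+t^{1+\delta}$, the inequality \eqref{gn-sob} with $d\delta<2$, and Young's inequality give constants $c_0,C_0>0$ and an exponent $\rho\ge2$, all independent of $\epsilon$, such that for every $v\in\mathbb H^1$
$$c_0\|\nabla v\|^2-C_0\big(1+\|v\|^{\rho}\big)\le H_{\epsilon}(v)\le C_0\big(\|\nabla v\|^2+1+\|v\|^{\rho}\big).$$
Set $Z(v):=H_{\epsilon}(v)+C_0(1+\|v\|^{\rho})$; then $Z(v)\ge c_0\|\nabla v\|^2\ge0$ and $Z(v)\le 2C_0(\|\nabla v\|^2+1+\|v\|^{\rho})$. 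In particular $H_{\epsilon}(u_k^{\epsilon})$ is a.s.\ finite and integrable, so the modified energy of \eqref{Crank} is well-defined, and it suffices to bound $\sup_{k\le N}\|Z(u_k^{\epsilon})\|_{L^p(\Omega)}$.

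\textbf{Step 2: the Crank--Nicolson step conserves $Z$.} Taking the real inner product of the second line of \eqref{Crank} with $\Delta u_{k+\frac12}^{\epsilon}$ (resp.\ with the discrete-gradient term), using integration by parts on the bounded domain, $\langle\mathbf i a,a\rangle=0$, that $f_{\epsilon}$ is real-valued, and the difference-quotient identity recorded just before Proposition \ref{prop-con-crank}, one obtains
$$\|u_{k+1}^{\epsilon}\|=\big\|\Phi_{S,\mathcal F_{t_k}}^{\tau}(u_k^{\epsilon})\big\|,\qquad H_{\epsilon}(u_{k+1}^{\epsilon})=H_{\epsilon}\big(\Phi_{S,\mathcal F_{t_k}}^{\tau}(u_k^{\epsilon})\big),$$
hence $Z(u_{k+1}^{\epsilon})=Z(\Phi_{S,\mathcal F_{t_k}}^{\tau}(u_k^{\epsilon}))$. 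Moreover, since $\widetilde g(x)=\mathbf i x$, the process $v(t):=\Phi_{S,\mathcal F_{t_k}}^{t}(u_k^{\epsilon})$ solves $dv=-\tfrac12\sum_i|Q^{\frac12}e_i|^2v\,dt+\mathbf i v\,dW(t)$, and both $d\|v(t)\|^2$ and $d|v(t)|^2$ have vanishing It\^o drift; a Burkholder--Gronwall argument then yields $\sup_{k\le N}\|u_k^{\epsilon}\|_{L^q(\Omega;\mathbb H)}\le C(Q,T,u_0)$ for every $q\ge1$, exactly as in Proposition \ref{tm-con-mid}.

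\textbf{Step 3: energy growth along the stochastic step.} Apply It\^o's formula to $Z(v(t))=\tfrac12\|\nabla v\|^2-\tfrac\lambda2 F_{\epsilon}(|v|^2)+C_0(1+\|v\|^{\rho})$ on $[0,\tau]$. The term $\tfrac12\|\nabla v\|^2$ contributes a martingale and a drift $\lesssim\sum_i\|Q^{\frac12}e_i\|_{W^{1,\infty}}^2(\|\nabla v\|^2+\|v\|^2)$. For the entropy, $d|v|^2=2\mathrm{Re}(\mathbf i|v|^2dW)$ (no drift), so $dF_{\epsilon}(|v|^2)$ is a martingale plus the drift $2\int f_{\epsilon}'(|v|^2)|v|^4\sum_i(\mathrm{Im}\,Q^{\frac12}e_i)^2dx$, bounded by $C\sum_i\|Q^{\frac12}e_i\|_{L^{\infty}}^2\|v\|^2$ because $(A5)$ forces $|f_{\epsilon}'(s)|\le C/(\epsilon+s)$ and hence $|f_{\epsilon}'(|v|^2)|v|^4|\le C|v|^2$. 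Altogether $dZ(v(t))=d(\text{mart})+\mathcal B(v(t))\,dt$ with $|\mathcal B(v)|\le C(1+Z(v)+\|v\|^{\rho})$, while the quadratic variation of the martingale is $\le C(1+Z(v)+\|v\|^{2\rho})$; the only delicate contribution is the martingale term $\int f_{\epsilon}(|v|^2)|v|^2\,\mathrm{Im}\,Q^{\frac12}e_i\,dx$, whose $L^1$-factor is estimated uniformly in $\epsilon$ by splitting $\{|v|\le1\}\cup\{|v|>1\}$, using $(A4)$, $t|\log t|\lesssim t^{1-\eta}+t^{1+\delta}$, \eqref{gn-sob} and $d\delta<2$, precisely as in Lemma \ref{lm-mod-en}. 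Applying It\^o to $Z(v(t))^p$ (legitimate since $Z\ge0$), Burkholder and Gronwall on $[0,\tau]$, together with the $L^q$-bound on $\|v\|$ from Step 2, give
$$\E\big[Z(\Phi_{S,\mathcal F_{t_k}}^{\tau}(u_k^{\epsilon}))^p\,\big|\,\mathcal F_{t_k}\big]\le e^{C\tau}\big(Z(u_k^{\epsilon})^p+C\tau\big),$$
with $C=C(Q,\lambda,p)$ independent of $\epsilon$.

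\textbf{Step 4: conclusion, and the hard point.} Combining Steps 2 and 3, $\E[Z(u_{k+1}^{\epsilon})^p]\le e^{C\tau}(\E[Z(u_k^{\epsilon})^p]+C\tau)$; iterating over $k\le N$ and using $Z(u_0)=H_{\epsilon}(u_0)+C_0(1+\|u_0\|^{\rho})<\infty$ bounded independently of $\epsilon$ (Step 1, since $u_0\in\mathbb H^1$ is deterministic) yields $\sup_{k\le N}\E[Z(u_k^{\epsilon})^p]\le C(Q,T,\lambda,p,u_0)$, whence $\|\nabla u_k^{\epsilon}\|^2\le c_0^{-1}Z(u_k^{\epsilon})$ gives the claim for all $p\ge2$. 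The main obstacle is Step 3: one must control the drift \emph{and} the quadratic variation of $Z(v(t))$ by powers of $Z$ itself (so that Gronwall closes) while keeping all constants $\epsilon$-independent (to match the stated bound). This is exactly where the full strength of Assumption \ref{main-reg-fun}---the interplay of $(A4)$, $(A5)$ with the Gagliardo--Nirenberg estimate of Lemma \ref{lm-mod-en}---is used; the fact that both $d\|v\|^2$ and $d|v|^2$ are drift-free for $\widetilde g=\mathbf i x$ is what makes the remaining terms tractable.
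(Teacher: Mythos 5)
Your proposal is correct and follows essentially the same route as the paper: exploit that the discrete-gradient (Crank--Nicolson) substep exactly conserves the mass and the regularized energy $H_{\epsilon}$, control the energy growth along the stochastic substep $\Phi_{S,\mathcal F_{t_k}}^{t}$ by an It\^o--Burkholder--Gronwall argument with $\epsilon$-independent constants (via (A4)--(A5)), and then convert the resulting uniform bound on $H_{\epsilon}$ into the $\mathbb H^1$ bound through the Gagliardo--Nirenberg coercivity inequality of Lemma \ref{lm-mod-en} on the bounded domain — exactly the two ingredients (\eqref{ene-pri} and the inequality $\frac12\|\nabla v\|^2\le |H_{\epsilon}(v)|+\delta'\|\nabla v\|^2+\dots$) in the paper's proof. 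Your packaging via the Lyapunov functional $Z=H_{\epsilon}+C_0(1+\|\cdot\|^{\rho})$ just makes explicit what the paper defers to ``the same steps as in Proposition \ref{tm-con-mid}''; the only cosmetic imprecision is that the one-step bound should carry moments of $\|u_k^{\epsilon}\|$ in its additive term, which your Step 2 mass estimates absorb anyway.
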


\begin{proof} 
	The proof is similar to that of Proposition \ref{tm-con-mid}.
	The main difference lies on the a priori estimate of the auxiliary process $\widehat u$ defined by 
	\begin{align*}
	\widehat u(t)&= \Phi_{S,\mathcal F_{t_k}}^{t-t_k}{\prod_{i=0}^{k-1}(\widehat \Phi_{\Delta+f}^{\tau} \Phi_{S,\mathcal F_{t_i}}^{\tau})}
	u_0, \;\text{if}\; t\in [t_k,t_{k+1}), \; k\le N-1,\\
	\widehat u(t_{k+1})&= \widehat \Phi_{\Delta+f}^{\tau}\lim_{t\to t_{k+1}} \widehat u(t)=u^{\epsilon}_{k+1},\quad
	\widehat u(0)=u_0.
	\end{align*} 
	Following the same steps as in the proof of Proposition \ref{tm-con-mid}, we obtain
	\begin{align}\label{ene-pri}
	{\sup_{k\le N-1}}\|\sup_{t\in [0,\tau]} H_{\epsilon}(\Phi_{S,\mathcal F_{t_k}}^{t}(u_{k}^{\epsilon}))\|_{L^p(\Omega;\mathbb H)}\le C(Q,T,\lambda,p,u_0).
	\end{align}
	By the Gagliardo--Nirenberg interpolation inequality \eqref{gn-sob}, the procedures in the proof of Lemma \ref{lm-mod-en}, Young's and H\"older's inequalities, it  can be verified that for a small enough ${ \delta'}>0, \eta>0$ and $\delta<\frac {2}d,$
	\begin{align*}
	\frac 12\|\nabla v\|^2%&\le |H_{\epsilon}(v)|+|F_{\epsilon}(v)|\\
	&\le  |H_{\epsilon}(v)|+{ \delta'} \|\nabla v\|^2+C({\delta'})(1+\|v\|^2+\|v\|^{\frac {4+4\delta-2\delta d}{2-\delta d}})
	+\|v\|_{L^{2-2\eta}}^{2-2\eta}.
	\end{align*}
	Since $\mathcal O$ is bounded, by \eqref{ene-pri}, we obtain  
	$${\sup\limits_{k\le N-1}}\sup\limits_{t\in [0,\tau]} \|\Phi_{S,\mathcal F_{t_k}}^{t}(u_{k}^{\epsilon})\|_{L^p(\Omega;\mathbb H^1)}\le C(u_0,\lambda,T,Q,{p}),$$ 
	which completes the proof.
\end{proof}

Compared to \eqref{mid-point}, \eqref{Crank} fails to preserve the stochastic symplectic structure.
However, it preserves the mass evolution law of the exact solution.

\begin{prop}
	Let the condition of  Theorem \ref{tm-con-det} hold and $\widetilde g(x)=\mathbf i x$ or $\widetilde g(x)=1$. 
	Then \eqref{Crank} preserves the evolution law of the mass of the exact solution, that is, 
	\begin{align*}
	\E\Big[M(u_{n+1}^{\epsilon})\Big]=\E\Big[M(u_{n}^{\epsilon})\Big]+\tau \sum_{i \in \mathbb N^+}\|Q^{\frac 12}e_i\|^2 \chi_{\{\widetilde g=1\}},
	\end{align*}
	where $\chi_{\{\widetilde g=1\}}=1$ for the additive noise case and $\chi_{\{\widetilde g=1\}}=0$ for the multiplicative noise case.
\end{prop}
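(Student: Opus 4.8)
The plan is to use the composition structure $u_{n+1}^\epsilon=\Phi_{\Delta+f}^\tau\big(\Phi_{S,\mathcal F_{t_n}}^\tau(u_n^\epsilon)\big)$ of the scheme \eqref{Crank} and to treat the two substeps independently. First I would prove that the implicit Crank--Nicolson substep $\Phi_{\Delta+f}^\tau$ preserves the $\mathbb H$-norm pathwise, i.e. $M(u_{n+1}^\epsilon)=M\big(\Phi_{S,\mathcal F_{t_n}}^\tau(u_n^\epsilon)\big)$ a.s.; then I would prove that the stochastic substep satisfies $\E\big[M(\Phi_{S,\mathcal F_{t_n}}^\tau(v))\big]=\E[M(v)]+\tau\sum_{i\in\mathbb N^+}\|Q^{\frac12}e_i\|^2\chi_{\{\widetilde g=1\}}$ for every $\mathcal F_{t_n}$-measurable $v$ with a finite second moment. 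Chaining the two identities with $v=u_n^\epsilon$ yields the asserted evolution law.

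For the first substep, set $u_{n+\frac12}^\epsilon:=\tfrac12\big(\Phi_{S,\mathcal F_{t_n}}^\tau(u_n^\epsilon)+u_{n+1}^\epsilon\big)$ and take the $\mathbb H$ inner product of the defining relation $u_{n+1}^\epsilon-\Phi_{S,\mathcal F_{t_n}}^\tau(u_n^\epsilon)=\mathbf i\Delta u_{n+\frac12}^\epsilon\,\tau+\mathbf i\lambda\tau\,c_n\,u_{n+\frac12}^\epsilon$ with $u_{n+\frac12}^\epsilon$, where $c_n:=\int_0^1 f_\epsilon\big(\theta|\Phi_{S,\mathcal F_{t_n}}^\tau(u_n^\epsilon)|^2+(1-\theta)|u_{n+1}^\epsilon|^2\big)\,d\theta$ is real because $f_\epsilon$ is real-valued. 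The left-hand side equals $\tfrac12\big(\|u_{n+1}^\epsilon\|^2-\|\Phi_{S,\mathcal F_{t_n}}^\tau(u_n^\epsilon)\|^2\big)$; on the right-hand side, integration by parts (the boundary contributions vanish under homogeneous Dirichlet or periodic conditions, and on $\mathbb R^d$) gives $\langle\mathbf i\Delta u_{n+\frac12}^\epsilon,u_{n+\frac12}^\epsilon\rangle=\mathrm{Re}\big[-\mathbf i\|\nabla u_{n+\frac12}^\epsilon\|^2\big]=0$, and $\langle\mathbf i\lambda c_n u_{n+\frac12}^\epsilon,u_{n+\frac12}^\epsilon\rangle=\lambda c_n\,\mathrm{Re}\big[\mathbf i\|u_{n+\frac12}^\epsilon\|^2\big]=0$. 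Hence $M(u_{n+1}^\epsilon)=M\big(\Phi_{S,\mathcal F_{t_n}}^\tau(u_n^\epsilon)\big)$ almost surely; this is exactly the mass identity already used in the proof of Proposition~\ref{tm-con-mid}.

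For the second substep, in the additive case $\widetilde g=1$ one has $\Phi_{S,\mathcal F_{t_n}}^\tau(v)=v+\big(W(t_{n+1})-W(t_n)\big)$, so $M\big(\Phi_{S,\mathcal F_{t_n}}^\tau(v)\big)=\|v\|^2+2\langle v,W(t_{n+1})-W(t_n)\rangle+\|W(t_{n+1})-W(t_n)\|^2$; taking expectations the cross term drops because $v$ is $\mathcal F_{t_n}$-measurable while the increment is centred and independent of $\mathcal F_{t_n}$, and $\E\|W(t_{n+1})-W(t_n)\|^2=\tau\sum_i\|Q^{\frac12}e_i\|^2$. In the multiplicative case $\widetilde g(x)=\mathbf i x$, the process $\Phi_{S,\mathcal F_{t_n}}^s(v)$ started at $v$ solves $d\Phi=-\tfrac12\sum_i|Q^{\frac12}e_i|^2\Phi\,ds+\mathbf i\Phi\,dW$, and applying It\^o's formula to $\|\Phi_{S,\mathcal F_{t_n}}^s(v)\|^2$ shows that the drift cancels identically: the term $-\sum_i\int_{\mathcal O}|Q^{\frac12}e_i|^2|\Phi|^2\,dx$ produced by the modified (Stratonovich-to-It\^o) drift is exactly compensated by the quadratic-variation correction $+\sum_i\int_{\mathcal O}|Q^{\frac12}e_i|^2|\Phi|^2\,dx$ of $\mathbf i\Phi\,dW$, so that $\|\Phi_{S,\mathcal F_{t_n}}^s(v)\|^2$ is a local martingale. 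Since $\chi_{\{\widetilde g=1\}}=0$ in this case, combining the two substep identities gives the claim. The point requiring genuine care is precisely this It\^o computation for the stochastic substep: one must carefully match the modified drift against the It\^o correction — this cancellation being the discrete counterpart of the mass conservation law for the exact solution of \eqref{Reg-SlogS} noted after Theorem~\ref{mild-general} — and then upgrade the local martingale to a true martingale, which follows from the uniform $\mathbb H^1$ (hence $\mathbb H$) a priori bounds on $\Phi_{S,\mathcal F_{t_n}}^s(u_n^\epsilon)$ established in the proof of Proposition~\ref{tm-con-mid}; the remaining steps are routine bookkeeping.
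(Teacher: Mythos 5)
Your proof is correct and follows essentially the route the paper intends (the paper omits the proof as routine): the pathwise mass identity for the implicit substep is exactly the computation already used in the proof of Proposition \ref{tm-con-mid} — noting only that your $c_n$ is a real-valued \emph{function} of $x$, not a constant, which does not affect the cancellation since $\mathrm{Re}\big[\mathbf i\lambda\int_{\mathcal O} c_n|u^\epsilon_{n+\frac12}|^2dx\big]=0$ — and your treatment of the stochastic substep (direct expansion for $\widetilde g=1$, It\^o cancellation of the modified drift against the quadratic-variation term for $\widetilde g(x)=\mathbf i x$, plus the martingale upgrade via the a priori bounds) reproduces Proposition \ref{prop-evo} specialized to $g\equiv1$. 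No gaps.
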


In the following, we give the error estimate  of \eqref{Crank} by making use of  \eqref{mid-point} and solve its convergence problem in temporal direction.
The convergence analysis of \eqref{Crank} is  more complicated than  that of the splitting type scheme since the boundedness of energy of numerical solution may not imply the boundedness of the numerical solution under $\mathbb H^1$-norm.
Generally speaking, the a priori estimate of  \eqref{Crank} may be not uniform with respect to ${\epsilon}$.

\begin{tm}\label{tm-con-det}
	Let $\widetilde g=0, 1$ or ${\bi} x$, and $\mathcal O$ be bounded.  
	Assume that $f_{\epsilon}(x)=\log(\frac {\epsilon+x}{1+\epsilon x})$, $x>0$.
	Then the numerical solution of \eqref{Crank} is strongly convergent to the exact one of Eq. \eqref{SlogS}. Moreover, for $p\ge1$ and $\delta\in \big(0, \max(\frac 2{\max(d-2,0)},1)\big),$   there exists $C(Q,T,\lambda,u_0,\delta,\widetilde g,p)>0$  such that
	\begin{align*}
	&\sup_{k\le N}\|u^{\epsilon}_k-u(t_k)\|_{L^p(\Omega;\mathbb H)}\\\le& C(Q,T,\lambda,u_0,\delta,\widetilde g,p)\Big((\epsilon^{-1}(1+|\log(\epsilon)|)\tau^{\frac 12}+\tau^{\frac 13})+\epsilon^{\frac 12}+\epsilon^{\frac {{\delta}}2}
+ \epsilon^{-1}\tau^{\frac 14}|\log(\epsilon)|^{\frac 12}\Big).
	\end{align*}
	
\end{tm}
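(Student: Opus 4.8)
The plan is to combine the approximation estimate between \eqref{SlogS} and \eqref{Reg-SlogS} from Lemma \ref{lm-con} with a direct convergence analysis of the Crank--Nicolson scheme \eqref{Crank} toward \eqref{Reg-SlogS}. Writing $\|u_k^\epsilon-u(t_k)\|\le \|u_k^\epsilon-u^\epsilon(t_k)\|+\|u^\epsilon(t_k)-u(t_k)\|$, the second summand contributes exactly $\epsilon^{1/2}+\epsilon^{\delta/2}$ on a bounded domain by Lemma \ref{lm-con}, so it remains to prove
\[
\sup_{k\le N}\|u_k^\epsilon-u^\epsilon(t_k)\|_{L^p(\Omega;\mathbb H)}\le C\big(\epsilon^{-1}(1+|\log\epsilon|)\tau^{1/2}+\tau^{1/3}+\epsilon^{-1}\tau^{1/4}|\log\epsilon|^{1/2}\big).
\]

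First I would establish the $\epsilon$-uniform a priori bound $\sup_{k}\|u_k^\epsilon\|_{L^p(\Omega;\mathbb H^1)}\le C$ for \eqref{Crank}: for $\widetilde g=\mathbf i x$ on a bounded domain this is Proposition \ref{prop-con-crank} (boundedness of the modified energy $H_\epsilon$, then a Gagliardo--Nirenberg/Young argument), and for $\widetilde g=0,1$ the same reasoning applies with the stochastic convolution treated by the Burkholder inequality. I would also record, as in the proof of Proposition \ref{tm-con-mid}, the subinterval time-continuity estimate $\|\widehat u(s)-\widehat u_{[s]+1/2}\|_{L^p(\Omega;\mathbb H)}\le C\tau^{1/2}+C|\log\epsilon|\tau$ for the predictable auxiliary process $\widehat u$ built from $\Phi_{S,\mathcal F_{t_k}}^{t-t_k}$ and $\widehat\Phi_{\Delta+f}^\tau$. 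Then I would mimic the mild-form comparison of Proposition \ref{tm-con-mid}: subtracting the mild form of $\widehat u$, written with the rational approximants $S_\tau=(I+\tfrac12\mathbf i\tau\Delta)(I-\tfrac12\mathbf i\tau\Delta)^{-1}$ and $T_\tau=(I-\tfrac12\mathbf i\tau\Delta)^{-1}$, from that of $u^\epsilon$ produces terms $II_1,\dots,II_7$; the $\tau^{1/3}$ contributions come from $\|e^{\mathbf i k\Delta\tau}-S_\tau^k\|_{\mathcal L(\mathbb H^1,\mathbb H)}\le C\tau^{1/3}$ and $\|S_\tau-I\|_{\mathcal L(\mathbb H^1,\mathbb H)}+\|T_\tau-I\|_{\mathcal L(\mathbb H^1,\mathbb H)}\le C\tau^{1/2}$, while the Lipschitz-in-$u$ contributions are absorbed by Gronwall's inequality together with the $\mathbb H^1$ a priori bound.

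The genuinely new point is the nonlinearity term $II_2$, where the Crank--Nicolson nonlinearity $\overline f_\epsilon:=\int_0^1 f_\epsilon(\theta|\Phi_{S,\mathcal F_{t_k}}^\tau(u_k^\epsilon)|^2+(1-\theta)|u_{k+1}^\epsilon|^2)\,d\theta$ replaces a point evaluation of $f_\epsilon$. I would split $f_\epsilon(|u^\epsilon(s)|^2)u^\epsilon(s)-\overline f_\epsilon\,\widehat u_{[s]+1/2}$ into the mid-point piece $f_\epsilon(|u^\epsilon(s)|^2)u^\epsilon(s)-f_\epsilon(|\widehat u_{[s]+1/2}|^2)\widehat u_{[s]+1/2}$, handled as in Proposition \ref{tm-con-mid} via \eqref{con-f} and the time-continuity estimate, and the quadrature piece $\big(\overline f_\epsilon-f_\epsilon(|\widehat u_{[s]+1/2}|^2)\big)\widehat u_{[s]+1/2}$. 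For the latter I would use $|f_\epsilon'(x)|\le C/\epsilon$, which follows from the explicit form of $f_\epsilon$ (equivalently from (A5)), so that the integrand difference is bounded by $C\epsilon^{-1}\big(|\,|\Phi_{S,\mathcal F_{t_k}}^\tau(u_k^\epsilon)|^2-|u_{k+1}^\epsilon|^2\,|+|\Phi_{S,\mathcal F_{t_k}}^\tau(u_k^\epsilon)-u_{k+1}^\epsilon|^2\big)$; then the scheme identity $u_{k+1}^\epsilon-\Phi_{S,\mathcal F_{t_k}}^\tau(u_k^\epsilon)=\mathbf i\Delta u_{k+1/2}^\epsilon\tau+\mathbf i\lambda\tau(\cdots)u_{k+1/2}^\epsilon$ gives only $\|u_{k+1}^\epsilon-\Phi_{S,\mathcal F_{t_k}}^\tau(u_k^\epsilon)\|_{\mathbb H^{-1}}\le C\tau(1+|\log\epsilon|)$ against the $\mathbb H^1$ bound; interpolating between $\mathbb H^{-1}$ and $\mathbb H^1$, using Hölder's inequality and the Gagliardo--Nirenberg inequality \eqref{gn-sob} on the product with $\widehat u_{[s]+1/2}$, and taking $L^p(\Omega)$-moments, one reaches $C\epsilon^{-1}\tau^{1/4}|\log\epsilon|^{1/2}$ for this piece. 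A Gronwall iteration on $\mathbb E\|u_{k+1}^\epsilon-u^\epsilon(t_{k+1})\|^p$ then closes the estimate, and combining it with Lemma \ref{lm-con} completes the proof.

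The main obstacle is exactly this quadrature piece of $II_2$: since only an $\epsilon$-uniform $\mathbb H^1$ (not $\mathbb H^2$) bound is available for \eqref{Crank}, the increment $u_{k+1}^\epsilon-\Phi_{S,\mathcal F_{t_k}}^\tau(u_k^\epsilon)$ cannot be estimated in $L^2$ with a clean power of $\tau$; one is forced to measure it in the weaker $\mathbb H^{-1}$ norm, interpolate back up at the cost of halving the temporal exponent, and balance this against the $\epsilon^{-1}$ blow-up of $f_\epsilon'$ near the origin — which is precisely what produces the slower, $\epsilon$-weighted $\tau^{1/4}$ term. Carrying all $L^p(\Omega)$-moments through this interpolation (Burkholder for the stochastic part of the increment, uniform moment bounds elsewhere) is the technically delicate step.
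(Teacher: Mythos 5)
Your overall architecture is the same as the paper's: split the error via Lemma \ref{lm-con}, use the $\mathbb H^1$ a priori bound of Proposition \ref{prop-con-crank}, build the predictable auxiliary process, compare mild forms with $S_\tau,T_\tau$ and reuse the terms $II_1$--$II_7$ from Proposition \ref{tm-con-mid}, isolating the Crank--Nicolson quadrature error as an extra term. The genuine gap is in your treatment of that quadrature term. You bound $\big|\int_0^1 f_\epsilon(\theta|\Phi^\tau_{S,\mathcal F_{t_k}}(u_k^\epsilon)|^2+(1-\theta)|u_{k+1}^\epsilon|^2)\,d\theta-f_\epsilon(|\widehat u_{[s]+\frac12}|^2)\big|$ by $C\epsilon^{-1}$ times the density difference (via (A5)), which leaves you with the triple product $\epsilon^{-1}\,|u_{k+1}^\epsilon-\Phi^\tau_{S,\mathcal F_{t_k}}(u_k^\epsilon)|\,(|u_{k+1}^\epsilon|+|\Phi^\tau_{S,\mathcal F_{t_k}}(u_k^\epsilon)|)\,|\widehat u_{[s]+\frac12}|$ to control in $L^2(\mathcal O)$ using only $\epsilon$-uniform $\mathbb H^1$ bounds. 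For $d\ge3$ the H\"older exponents are rigid (all three factors must be placed in $L^6$), so you cannot interpolate the increment below $L^6$ without pushing the two bounded factors above $L^6$, and no positive power of $\tau$ comes out; the theorem is not restricted to $d\le2$. Moreover, even where your interpolation closes, your chain yields an extra factor $\epsilon^{-1}$ on the $\tau^{1/4}$ term beyond what is already produced by the rest of the argument, overshooting the stated bound, and the factor $|\log\epsilon|^{1/2}$ you write down has no source in your computation: the Lipschitz bound $|f_\epsilon'|\le C/\epsilon$ gives $\epsilon^{-1}$, not a logarithm.

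The missing idea is precisely how the paper estimates $II_8$ without ever invoking the $\epsilon^{-1}$-Lipschitz bound. On the event $|\Phi^\tau_{S,\mathcal F_{t_k}}(u_k^\epsilon)|^2\ge|u_{k+1}^\epsilon|^2$ one uses convexity of $|\cdot|^2$, splits the $\theta$-integral at the largest $\theta_0$ with $|\widehat u_{k+\frac12}^\epsilon|^2=\theta_0|\Phi^\tau_{S,\mathcal F_{t_k}}(u_k^\epsilon)|^2+(1-\theta_0)|u_{k+1}^\epsilon|^2$, bounds the $f_\epsilon$-difference raised to the power $\delta_1'=\tfrac12$ by $\big(\tfrac{\text{density difference}}{\epsilon+|\widehat u_{k+\frac12}|^2}\big)^{1/2}$ plus an $\epsilon^{1/2}$-term for $\theta\ge\theta_0$, and uses concavity together with Jensen's inequality for $\theta\le\theta_0$. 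Writing the difference as its square root times its square root and using (A1) for one factor produces the $|\log\epsilon|^{1/2}$, while the cancellation $|\widehat u_{k+\frac12}|/(\epsilon+|\widehat u_{k+\frac12}|^2)^{1/2}\le1$ removes the singular denominator; a Cauchy--Schwarz in $x$ then requires only $\mathbb H$-norms, giving $\|II_8\|_{L^p(\Omega;\mathbb H)}\le C\|u_{k+1}^\epsilon-\Phi^\tau_{S,\mathcal F_{t_k}}(u_k^\epsilon)\|_{L^{2p}(\Omega;\mathbb H)}^{1/2}(\|u_{k+1}^\epsilon\|_{L^{2p}(\Omega;\mathbb H)}+\|\Phi^\tau_{S,\mathcal F_{t_k}}(u_k^\epsilon)\|_{L^{2p}(\Omega;\mathbb H)})^{1/2}|\log\epsilon|^{1/2}+C\|u_{k+1}^\epsilon-\Phi^\tau_{S,\mathcal F_{t_k}}(u_k^\epsilon)\|_{L^{2p}(\Omega;\mathbb H)}$, valid in every dimension and with no $\epsilon^{-1}$. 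Combined with $\|u_{k+1}^\epsilon-\Phi^\tau_{S,\mathcal F_{t_k}}(u_k^\epsilon)\|_{L^{2p}(\Omega;\mathbb H)}\le C(\tau^{1/2}+\tau|\log\epsilon|)$ — which follows directly from $\|S_\tau-I\|_{\mathcal L(\mathbb H^1,\mathbb H)}\le C\tau^{1/2}$, (A1) and Proposition \ref{prop-con-crank}, so your detour through $\mathbb H^{-1}$-interpolation is unnecessary — this yields the $\tau^{1/4}|\log\epsilon|^{1/2}$ contribution. Without this (or an equivalent) device, your proposal does not establish the stated rate.
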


\begin{proof}
	Thanks to  Proposition \ref{prop-con-crank} and the continuity of $e^{\bi \Delta \tau}$,  it holds that 
	$
	\|u^{\epsilon}_{k+1}-\Phi^\tau_S(u^{\epsilon}_k)\|_{L^p(\Omega;\mathbb H)}\le C(\tau^{\frac 12}+\tau |\log(\epsilon)|).
	$
	By expanding the flow $\widetilde \Phi_{\Delta+f}$ of \eqref{Crank} via $ \Phi_{\Delta+f}$ of \eqref{mid-point}, and following the same procedures {as in} the proof of Theorem \ref{tm-con-mid}, we obtain
	\begin{align*}
	u^{\epsilon}(t)-\widehat u(t):=II_1+II_2+II_3+II_4+II_5+II_6+II_7+II_8,
	\end{align*}
	where $II_1-II_7$ are presented in the proof of Theorem \ref{tm-con-mid}.
	Here $II_8$ is the expansion error $\widetilde \Phi_{\Delta+f}$ given by 
	\begin{align*}
	II_8 :=&\bi \lambda  \int_0^{t_k} S_{k,t}(s)T_{\tau}\Big(f_{\epsilon}(|\widehat u_{[s]+\frac 12}|^2)\nonumber\\
	&-\int_0^1f_{\epsilon}(\theta |\Phi^\tau_{S,\mathcal F_{t_k}} (u^{\epsilon}_k)|^2+(1-\theta)|u^{\epsilon}_{k+1}|^2) d\theta\Big)\widehat u_{[s]+\frac 12}\Big)ds.
	\end{align*}
	In order to estimate $II_8$, let us consider the event that $|\Phi^\tau_{S,\mathcal F_{t_k}} (u^{\epsilon}_k)|^2\ge |u^{\epsilon}_{k+1}|^2$.
			The estimate on the event that $|\Phi^\tau_{S,\mathcal F_{t_k}} (u^{\epsilon}_k)|^2\le |u^{\epsilon}_{k+1}|^2$ is similar.
	
	The convexity of $|\cdot|^2$ implies that 
	$|\widehat u^{\epsilon}_{k+\frac 12}|^2\le \frac 12|\Phi^\tau_{S,\mathcal F_{t_k}} (u^{\epsilon}_k)|^2 +\frac 12|u^{\epsilon}_{k+1}|^2.$
	Assume that  $\theta_0\in (0,\frac 12)$ is the largest  number such that $|\widehat u^{\epsilon}_{k+\frac 12}|^2= \theta_0 |\Phi^\tau_{S,\mathcal F_{t_k}} (u^{\epsilon}_k)|^2+(1-\theta_0)|u^{\epsilon}_{k+1}|^2$.  
	Otherwise, the proof of the desired estimate is simple by choosing one of the following estimates.  
	Then when $\theta\ge \theta_0$ it holds that for $\delta'_1\in (0,1),$
	\begin{align*}
	&\Big|f_{\epsilon}(|\widehat u_{k+\frac 12}|^2)-f_{\epsilon}(\theta |\Phi^\tau_{S,\mathcal F_{t_k}} (u^{\epsilon}_k)|^2+(1-\theta)| u^{\epsilon}_{k+1}|^2)\Big|^{\delta'_1}\\
	\le& C\Big(\frac {\theta |\Phi^\tau_{S,\mathcal F_{t_k}} (u^{\epsilon}_k)|^2+(1-\theta)| u^{\epsilon}_{k+1}|^2-|\widehat u^{\epsilon}_{k+\frac 12}|^2}{\epsilon +|\widehat u_{k+\frac 12}|^2}\Big)^{\delta'_1}\\
	&+C\epsilon^{\delta'_1} \Big(\frac { \theta |\Phi^\tau_{S,\mathcal F_{t_k}} (u^{\epsilon}_k)|^2+(1-\theta)|u^{\epsilon}_{k+1}|^2- |\widehat u_{k+\frac 12}|^2}{1+\epsilon |\widehat u_{k+\frac 12}|^2}\Big)^{\delta'_1}.
	\end{align*}
	When $\theta\le \theta^0$, the dominant part of $f_{\epsilon}(|\widehat u^{\epsilon}_{k+\frac 12}|^2)-f_{\epsilon}(\theta |\Phi^\tau_{S,\mathcal F_{t_k}} u^{\epsilon}_k|^2+(1-\theta)|u^{\epsilon}_{k+1}|^2)$ is a concave function over $\theta.$ 
	Using Jensen's inequality, we obtain that  {\small
		\begin{align*}
		&\int_0^{\theta^0}(\log(\epsilon +|\widehat u_{k+\frac 12}^{\epsilon}|^2)-\log(\epsilon +\theta |\Phi^\tau_{S,\mathcal F_{t_k}} (u^{\epsilon}_k)|^2+(1-\theta)|u^{\epsilon}_{k+1}|^2)) d\theta\\
		\le& \theta^0 \Big(\log(\epsilon +|\widehat u_{k+\frac 12}^{\epsilon}|^2)-\log(\epsilon +\frac {\theta^0} 2|\Phi_{S,\mathcal F_{t_k}}^\tau (u^{\epsilon}_k)|^2 +(1-\frac {\theta^0}2)|u^{\epsilon}_{k+1}|^2)\Big)\\
		\le& C \theta^0 \frac {|\widehat u_{k+\frac 12}|^2-\frac {\theta^0} 2|\Phi_{S,\mathcal F_{t_k}}^\tau (u^{\epsilon}_k)|^2 -(1-\frac {\theta^0}2)|u^{\epsilon}_{k+1}|^2}{\epsilon +\frac {\theta^0} 2|\Phi_{S,\mathcal F_{t_k}}^\tau (u^{\epsilon}_k)|^2 +(1-\frac {\theta^0}2)|u^{\epsilon}_{k+1}|^2}\\
		\le & C\theta^0 \frac {|\widehat u_{k+\frac 12}|^2-\frac {\theta^0} 2|\Phi_{S,\mathcal F_{t_k}}^\tau (u^{\epsilon}_k)|^2 -(1-\frac {\theta^0}2)|u^{\epsilon}_{k+1}|^2}{\epsilon +\frac {\theta^0} 2|\Phi_{S,\mathcal F_{t_k}}^\tau (u^{\epsilon}_k)|^2 +\frac {\theta^0}2 |u^{\epsilon}_{k+1}|^2}
		\end{align*}
}Combining the above estimates, using the boundedness of $S_{k,t}$ and $T_{\tau}$, we achieve that for $\delta_1'=\frac 12,$
	{\small
		\begin{align*}
		&\|II_8\|_{L^p(\Omega;\mathbb H)}\\
		&\le C\| u^{\epsilon}_{k+1}-\Phi^\tau_{S,\mathcal F_{t_k}} (u^{\epsilon}_k)\|_{L^{2p}(\Omega;\mathbb H)}^{\frac 12}(\| u^{\epsilon}_{k+1}\|_{L^{2p}(\Omega;\mathbb H)}+\|\Phi^\tau_{S,\mathcal F_{t_k}} (u^{\epsilon}_k)\|_{L^{2p}(\Omega;\mathbb H)})^{\frac 12}|\log(\epsilon)|^{\frac 12}\\
		&+C\| u^{\epsilon}_{k+1}-\Phi^\tau_{S,\mathcal F_{t_k}} (u^{\epsilon}_k)\|_{L^{2p}(\Omega;\mathbb H)}.
		\end{align*}
		B}ased on the estimates of $II_1$-$II_7$ in the proof of Proposition \ref{tm-con-mid} and the above estimate of $II_8$, we complete the proof.
\end{proof}

\subsection{Weak convergence of the regularized energy}

In order to show that \eqref{Crank} is a suitable scheme which is convergent to the exact solution in terms of energy, we take  $f_{\epsilon}(x)=\log(\frac {\epsilon+x}{1+\epsilon x})$ to illustrate the main strategy. 
One could generalize the choices of $f_{\epsilon}$ according to the proof of the following proposition.

\begin{prop}\label{mid-eng}
	Let the condition of  Theorem \ref{tm-con-det} hold and $\widetilde g(x)=\mathbf i x$.  Assume that $f_{\epsilon}=\log(\frac {\epsilon+x}{1+\epsilon x})$, $x>0$. Then
	the regularized  energy of \eqref{Crank} is convergent to the energy of  \eqref{SlogS}. Furthermore, for $p\ge1,$ and $\delta\in \big(0, \max(\frac 2{\max(d-2,0)},1)\big),$  there exists $C(Q,T,\lambda,p,u_0)>0$ such that
	\begin{align*} 
	\Big|\E\Big[H_{\epsilon}(u_k^{\epsilon})-H_0(u^0(t_k)\Big] \Big|&\le C(Q,T,\lambda,p,u_0)\Big((\epsilon^{-1}(1+|\log(\epsilon)|)\tau^{\frac 12}+\tau^{\frac 13})+\epsilon^{\frac 12}+\epsilon^{\frac {{\delta}}2}\\
	&\quad+ \epsilon^{-1}\tau^{\frac 14}|\log(\epsilon)|^{\frac 12}\Big).
	\end{align*}
\end{prop}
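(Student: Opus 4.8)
The plan is to split the error as
\[
\E\big[H_\epsilon(u_k^\epsilon)-H_0(u^0(t_k))\big]=\E\big[H_\epsilon(u_k^\epsilon)-H_\epsilon(u^\epsilon(t_k))\big]+\E\big[H_\epsilon(u^\epsilon(t_k))-H_0(u^0(t_k))\big]
\]
and bound the two pieces separately. For the second piece I would first verify that $f_\epsilon(x)=\log(\frac{\epsilon+x}{1+\epsilon x})$ fulfills the structural hypothesis \eqref{add-f} of Proposition \ref{reg-ene}: a direct computation gives $1-\frac{\partial f_\epsilon}{\partial x}(x)x=\epsilon\,\frac{1+2\epsilon x+x^2}{(\epsilon+x)(1+\epsilon x)}$, which is dominated by the right-hand side of \eqref{add-f}. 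Then Proposition \ref{reg-ene} together with the boundedness of $\mathcal O$ immediately gives $\big|\E[H_\epsilon(u^\epsilon(t_k))-H_0(u^0(t_k))]\big|\le C(\epsilon^{1/2}+\epsilon^{\delta/2})$.

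The heart of the argument is the first piece, the discrete weak error of the regularized energy, where plain strong convergence is of no use because it only controls the $\mathbb H$-norm while $H_\epsilon$ contains $\frac12\|\nabla\cdot\|^2$. The key structural observation is that the map $\Phi_{\Delta+f}^\tau$ in \eqref{Crank} is an averaged-vector-field/discrete-gradient discretization of the Hamiltonian vector field $\mathbf i\Delta u+\mathbf i\lambda f_\epsilon(|u|^2)u$: combining the discrete chain rule for the $f_\epsilon$-term recorded right after \eqref{Crank}, the fact that the implicit midpoint rule is a discrete gradient for $\frac12\|\nabla\cdot\|^2$, and $\mathrm{Re}\langle a,\mathbf i a\rangle=0$, one obtains the \emph{exact} identity $H_\epsilon(u_{k+1}^\epsilon)=H_\epsilon(\Phi_{S,\mathcal F_{t_k}}^\tau(u_k^\epsilon))$ (here $\mathcal O$ bounded makes the integration by parts unambiguous; it can also be checked directly by testing \eqref{Crank} against $-\Delta$ of the midpoint and using the scheme's own discrete chain rule for $F_\epsilon$). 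Hence the per-step change of $\E[H_\epsilon]$ along \eqref{Crank} is governed solely by the stochastic substep $\Phi_{S,\mathcal F_{t_k}}$, and applying the It\^o formula on $[t_k,t_{k+1}]$ to $H_\epsilon$ along that substep produces exactly the same drift functional $\mathcal G_\epsilon$ as in the It\^o formula for $H_\epsilon(u^\epsilon(\cdot))$ used in the proof of Proposition \ref{reg-ene} --- the deterministic Hamiltonian part of \eqref{Reg-SlogS} contributes nothing to the drift of $H_\epsilon$. Telescoping then gives
\[
\E\big[H_\epsilon(u_k^\epsilon)-H_\epsilon(u^\epsilon(t_k))\big]=\sum_{j=0}^{k-1}\int_{t_j}^{t_{j+1}}\E\big[\mathcal G_\epsilon(\Phi_{S,\mathcal F_{t_j}}^s(u_j^\epsilon))-\mathcal G_\epsilon(u^\epsilon(s))\big]\,ds .
\]

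To close the estimate I would exploit that, for $\widetilde g(x)=\mathbf i x$, $\mathcal G_\epsilon(u)$ is a combination of $\int_{\mathcal O}|u|^2|\nabla Q^{1/2}e_i|^2\,dx$ and $\int_{\mathcal O}|u|^4\frac{\partial f_\epsilon}{\partial x}(|u|^2)(\mathrm{Im}\,Q^{1/2}e_i)^2\,dx$, hence depends on $u$ only through $|u|$ pointwise (no gradient of the error enters), and that $x\mapsto x^2\frac{\partial f_\epsilon}{\partial x}(x)$ has an $\epsilon$-uniformly bounded derivative for this $f_\epsilon$; together with $\sum_i\|Q^{1/2}e_i\|_{W^{1,\infty}}^2<\infty$ this yields the $\epsilon$-uniform bound $|\mathcal G_\epsilon(a)-\mathcal G_\epsilon(b)|\le C(1+\|a\|+\|b\|)\|a-b\|$. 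The telescoped sum is then bounded, via Cauchy--Schwarz and the $\mathbb H$-moment bounds of Lemma \ref{lm-con} and Proposition \ref{prop-con-crank}, by $C\sum_j\int_{t_j}^{t_{j+1}}\|\Phi_{S,\mathcal F_{t_j}}^s(u_j^\epsilon)-u^\epsilon(s)\|_{L^2(\Omega;\mathbb H)}\,ds$; inserting $\Phi_{S,\mathcal F_{t_j}}^s(u^\epsilon(t_j))$, the first part is handled by the stability of $\Phi_{S,\mathcal F_{t_j}}$ and the numerical strong error $\|u_j^\epsilon-u^\epsilon(t_j)\|_{L^2(\Omega;\mathbb H)}\le\|u_j^\epsilon-u(t_j)\|_{L^2}+\|u(t_j)-u^\epsilon(t_j)\|_{L^2}$, bounded by Theorem \ref{tm-con-det} and Lemma \ref{lm-con}, while the second (one-step consistency) part is estimated through the mild formulation of $u^\epsilon$ using $\|(e^{\mathbf i\Delta\tau}-I)w\|\le C\tau^{1/2}\|w\|_{\mathbb H^1}$, the bound (A1) on $f_\epsilon$, and the usual stochastic-integral moment estimates, giving $C\tau^{1/2}(1+|\log\epsilon|)$, which is dominated by the former. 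Adding the Proposition \ref{reg-ene} bound yields the claimed estimate. The main obstacle is precisely this structural reduction: realizing that the energy increment of the Crank--Nicolson scheme collapses onto the stochastic substep, so that the $\mathbb H^1$-norm of the numerical error --- which is not available --- never needs to be controlled, and checking that the resulting drift functional $\mathcal G_\epsilon$ is $\epsilon$-uniformly Lipschitz on $\mathbb H$; the latter is where the explicit choice $f_\epsilon(x)=\log(\frac{\epsilon+x}{1+\epsilon x})$ is used in an essential way. A secondary technical point is the one-step consistency estimate for the stochastic substep, in which the ``missing'' drift $\int\mathbf i\Delta u^\epsilon$ must be absorbed into $\|(e^{\mathbf i\Delta\tau}-I)u^\epsilon(t_j)\|$ via the mild form rather than estimated pointwise in time.
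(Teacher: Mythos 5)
Your proposal is correct and follows essentially the same route as the paper: the same split into the regularization error (handled by Proposition \ref{reg-ene}) plus the discrete weak energy error, with the latter telescoped over the time steps so that only the noise-induced It\^o drift terms of $H_\epsilon$, evaluated along the exact solution and along the stochastic-substep interpolation (the paper's $\widehat u(s)=\Phi_{S,\mathcal F_{t_j}}^{s-t_j}(u_j^\epsilon)$), survive, and then controlled through the strong error of Theorem \ref{tm-con-det} and Lemma \ref{lm-con}. The only differences are presentational: you make explicit the exact conservation of $H_\epsilon$ by the deterministic Crank--Nicolson substep, which the paper uses implicitly, and you bound the drift-difference via the $\epsilon$-uniform Lipschitz bound on $x\mapsto x^2 f'_\epsilon(x)$ rather than reusing the \eqref{add-f}-based estimates from the proof of Proposition \ref{reg-ene}; both yield the stated rate.
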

\begin{proof}
	According to Lemma \ref{reg-ene}, it suffices to estimate $\E[H_{\epsilon}(u^{\epsilon}_k)-H_{\epsilon}(u^{\epsilon}(t_k)].$
	By analyzing the expansion of the energy of $u^{\epsilon}_k$ and $u^{\epsilon}(t_k)$, we obtain
	\begin{align*}
	&\E\Big[H_{\epsilon}(u^{\epsilon}(t_k))-H_{\epsilon}(u^{\epsilon}_k)\Big]\\
	=&\E\Big[H_{\epsilon}(u^{\epsilon}(t_{k-1}))-H_{\epsilon}(\Phi(u^{\epsilon}_{k-1}))\Big]\\
	&+\int_{t_k}^{t_{k+1}}\frac 12\sum_{i}\Big(\|u^{\epsilon}(s)\nabla Q^{\frac 12}e_i\|^2-\|\widehat u(s)\nabla Q^{\frac 12}e_i\|^2\Big)ds\\
	&-\lambda\int_0^t\sum_{i} \E\Big[\Big\<f_{\epsilon}(|u^{\epsilon}|^2)u^{\epsilon},-\frac 12u^{\epsilon} |Q^{\frac 12}e_i|^2\Big\>-\Big\<f_{\epsilon}(|\widehat u|^2)\widehat u,-\frac 12\widehat u |Q^{\frac 12}e_i|^2\Big\>\Big]ds \\
	&-\frac 12 \lambda \int_0^t\sum_{i}\E\Big[\Big\<f_{\epsilon}(|u^{\epsilon}|^2)u^{\epsilon}Q^{\frac 12}e_i,u^{\epsilon}Q^{\frac 12}e_i\Big\>-\Big\<f_{\epsilon}(|\widehat u|^2)\widehat uQ^{\frac 12}e_i,\widehat u Q^{\frac 12}e_i\Big\>\Big]ds\\
	&
	-\frac 12 \lambda \int_0^t\sum_{i}\mathbb E\Big[\Big\<2 Re(\bi \bar u^{\epsilon} u^{\epsilon}Q^{\frac 12}e_i)\frac {\partial f_{\epsilon}}{\partial x}(|u^{\epsilon}|^2)u^{\epsilon},\bi u^{\epsilon}Q^{\frac 12}e_i\Big\>\\
	&\quad-\Big\<2 Re(\bi \bar {\widehat u} \widehat u Q^{\frac 12}e_i)\frac {\partial f_{\epsilon}}{\partial x}(|\widehat u|^2)\widehat u,\bi \widehat uQ^{\frac 12}e_i\Big\>\Big]ds,
	\end{align*}
	where $\widehat u$ is defined in the proof of Proposition \ref{prop-con-crank}.
	By applying the similar estimates in the proof of Lemma \ref{reg-ene} and H\"older's inequality, it follows that for $\delta \le \min(1,\frac {2}{\max(d-2,0)})$ and $\eta\in (0,1),$
	\begin{align*}
	&\quad\Big|\E\Big[H_{\epsilon}(u^{\epsilon}(t_k))-H_{\epsilon}(u^{\epsilon}_k)\Big]\Big|-\Big|\E\Big[H_{\epsilon}(u^{\epsilon}(t_{k-1}))-H_{\epsilon}(u^{\epsilon}_{k-1})\Big]\Big|\\
	&\le
	C\tau \sup_{t\in[t_k,t_{k+1}]}\E\Big[\|u^{\epsilon}(t)-\widehat u(t)\|\|u^{\epsilon}(t)+\widehat u(t)\|\Big]\\
	&\quad + C \tau|\lambda| \sum_{i}\| Q^{\frac 12}e_i\|_{L^{\infty}}^2 \epsilon^{\eta}\Big(1+\sup_{t\in [0,T]}\E\Big[\|u^{\epsilon}(t)\|^{2-2\eta}\Big]+\sup_{t\in [0,T]}\E\Big[\|\widehat u(t)\|^{2-2\eta}\Big]\Big)\\
	&\quad +
	C \tau |\lambda| \sum_{i}\| Q^{\frac 12}e_i\|_{L^{\infty}}^2 \epsilon^{\delta}\sup_{t\in [0,T]}\E\Big[\|u^{\epsilon}(t)\|_{L^{2+2\delta}}^{2+2\delta}+\|\widehat u(t)\|_{L^{2+2\delta}}^{2+2\delta}\Big].
	\end{align*}
	Using Theorem \ref{tm-con-det}, Proposition \ref{reg-ene} and iteration arguments, we finish the proof.
\end{proof}

\section{Appendix}

\begin{prop}\label{prop-evo}
	Let the condition of Theorem \ref{mild-general} and Assumption \ref{main-as} hold.  Let $u^{\epsilon}$ be the mild solution of  Eq. \eqref{Reg-SlogS} and $u^0$ be the mild solution of Eq.  \eqref{SlogS}. When $\widetilde g=1,$ the mild solution $u^{\epsilon}$ is shown to satisfy the following evolution laws, 
	{\small
		\begin{align*}
		M(u^{\epsilon}(t))&=M(u^{\epsilon}_0)+\int_{0}^t\sum_{i\in \mathbb N^+} \|Q^{\frac 12}e_i\|^2ds
		+2\int_{0}^t\<u^{\epsilon}(s),dW(s)\>,\\
		M_{\alpha}(u^{\epsilon}(t))
		&=M_{\alpha}(u^{\epsilon}_0)
		+\int_0^t 4\alpha \<(1+|x|^2)^{\alpha-1}x u^{\epsilon}(s),\bi \nabla u^{\epsilon}(s)\>ds\\
		&\quad+\int_0^t  \sum_{i\in\mathbb N^+}\|Q^{\frac 12}e_i\|_{L^2_{\alpha}}^2ds+ \int_0^t 2\<(1+|x|^2)^{\alpha}u^{\epsilon}(s),dW(s)\>.
		\end{align*}
	}
	{
		When $\widetilde g=\mathbf i g(|x|^2)x$, the mild solution $u^{\epsilon}$ satisfies the following evolution laws,}
	{\small \begin{align*}
		M(u^{\epsilon}(t))&=M(u^{\epsilon}_0)+2\int_{0}^t\<u^{\epsilon}(s),\mathbf  i g(|u^{\epsilon}(s)|^2)u^{\epsilon}(s) dW(s)\>\\
		&\quad+\int_0^t \<u^{\epsilon}(s), -\bi \sum_{k\in \mathbb N^+}Im(Q^{\frac 12}e_i)Q^{\frac 12}e_i g'(|u^{\epsilon}(s)|^2)g(|u^{\epsilon}(s)|^2)|u^{\epsilon}(s)|^2u^{\epsilon}(s)\>ds,\\
		M_{\alpha}(u^{\epsilon}(t))&=M_{\alpha}(u^{\epsilon}_0)
		+\int_0^t 4\alpha \<(1+|x|^2)^{\alpha-1}x u^{\epsilon}(s),\bi \nabla u^{\epsilon}(s)\>ds\\
		&\quad-2\int_0^t \<(1+|x|^2)^{\alpha}u^{\epsilon}(s), \bi \sum_{k\in \N^+}Im(Q^{\frac 12}e_i) Q^{\frac 12}e_i g'(|u^{\epsilon}(s)|^2)g(|u^{\epsilon}(s)|^2)|u^{\epsilon}(s)|^2u^{\epsilon}(s) \>ds\\
		&\quad +2\int_0^t\<(1+|x|^2)^{\alpha}u^{\epsilon}(s),\mathbf i g(|u^{\epsilon}(s)|^2)u^{\epsilon}(s)dW(s)\>.
		\end{align*}
	}
\end{prop}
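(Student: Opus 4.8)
The plan is to apply the It\^o formula to the two functionals $M(u)=\|u\|^2$ and $M_\alpha(u)=\|u\|_{L^2_\alpha}^2$ along the mild solution $u^\epsilon$ of Eq.~\eqref{Reg-SlogS}, and to read off the four identities by identifying which terms survive. Since $u^\epsilon$ is only a mild solution, I would first run the computation on a spectral Galerkin truncation of \eqref{Reg-SlogS} --- for which the It\^o formula for these quadratic functionals is classical --- and then pass to the limit, using the uniform $\mathbb H^1\cap L^2_\alpha$ moment bounds of Theorem~\ref{mild-general} and Lemma~\ref{lm-con} to control every term, and the standing trace hypotheses $\sum_i\|Q^{\frac12}e_i\|_{L^2_\alpha}^2+\|Q^{\frac12}e_i\|_{\mathbb H^1}^2<\infty$ (additive case), resp.\ $\sum_i\|Q^{\frac12}e_i\|_{\mathbb H^1}^2+\|Q^{\frac12}e_i\|_{W^{1,\infty}}^2<\infty$ (multiplicative case), to guarantee that each stochastic integral below is a genuine martingale.

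The structural observation behind all four identities, and the reason such laws survive the non-Lipschitz nonlinearity, is that the regularized logarithmic drift contributes nothing: since $f_\epsilon$ is real-valued, for any real weight $w$ one has $\<w\,u^\epsilon,\bi\lambda u^\epsilon f_\epsilon(|u^\epsilon|^2)\>={\rm Re}\bigl[-\bi\lambda\int_{\mathcal O}w|u^\epsilon|^2 f_\epsilon(|u^\epsilon|^2)\,dx\bigr]=0$, so it drops out of both $dM$ and $dM_\alpha$. For $M$ the Laplacian term also drops out after integration by parts, $\<u^\epsilon,\bi\Delta u^\epsilon\>={\rm Re}[\bi\|\nabla u^\epsilon\|^2]=0$, legitimate because $u^\epsilon\in\mathbb H^1$ and by the boundary conditions; for $M_\alpha$ it does not, since the weight $(1+|x|^2)^\alpha$ fails to commute with $\Delta$, and integration by parts leaves precisely the term $4\alpha\<(1+|x|^2)^{\alpha-1}x\,u^\epsilon,\bi\nabla u^\epsilon\>$ (the part carrying two derivatives of $u^\epsilon$ being real and hence annihilated by ${\rm Re}[\bi\,\cdot\,]$).

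It then remains to bookkeep the noise contributions. In the additive case $\widetilde g=1$ the It\^o correction produces $\int_0^t\sum_i\|Q^{\frac12}e_i\|^2\,ds$ for $M$ (resp.\ $\int_0^t\sum_i\|Q^{\frac12}e_i\|_{L^2_\alpha}^2\,ds$ for $M_\alpha$), together with the martingale $2\int_0^t\<u^\epsilon(s),dW(s)\>$ (resp.\ $2\int_0^t\<(1+|x|^2)^\alpha u^\epsilon(s),dW(s)\>$), which gives the first two formulas. In the multiplicative case $\widetilde g(x)=\bi g(|x|^2)x$ the diffusion coefficient is $\bi g(|u^\epsilon|^2)u^\epsilon$, whose It\^o correction $\sum_i\|\bi g(|u^\epsilon|^2)u^\epsilon Q^{\frac12}e_i\|^2$ cancels exactly against the pairing of $u^\epsilon$ (resp.\ $(1+|x|^2)^\alpha u^\epsilon$) with the Stratonovich-to-It\^o drift $-\tfrac12\sum_i|Q^{\frac12}e_i|^2|g(|u^\epsilon|^2)|^2 u^\epsilon$; what survives is the contribution of the remaining drift $-\bi\sum_i g'(|u^\epsilon|^2)g(|u^\epsilon|^2)|u^\epsilon|^2 u^\epsilon\,{\rm Im}(Q^{\frac12}e_i)Q^{\frac12}e_i$ and of the stochastic integral against $\bi g(|u^\epsilon|^2)u^\epsilon\,dW(s)$, which --- together with the weighted-Laplacian term above in the $M_\alpha$ case --- yields the last two formulas. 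I expect the only genuine difficulty to be the rigorous justification of the It\^o formula for the mild solution, i.e.\ passing to the limit in the Galerkin $\Delta$-term and in the weighted pairings uniformly in the truncation level; this is precisely where the a priori estimates of Theorem~\ref{mild-general} and Lemma~\ref{lm-con} enter, and it is routine once those are in hand.
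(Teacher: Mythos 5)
The paper states Proposition~\ref{prop-evo} in the appendix without giving a proof (the computation is treated as standard and is delegated to \cite{C2020}), and your proposal supplies exactly the intended argument: It\^o's formula for $\|u\|^2$ and $\|u\|_{L^2_\alpha}^2$ justified through a Galerkin/regularization step and the uniform $\mathbb H^1\cap L^2_\alpha$ moment bounds, with the logarithmic drift vanishing because $f_\epsilon$ is real-valued, the Laplacian vanishing from $dM$ but leaving the weighted commutator term $4\alpha\<(1+|x|^2)^{\alpha-1}x\,u^{\epsilon},\bi\nabla u^{\epsilon}\>$ in $dM_\alpha$, and the It\^o correction cancelling against the pairing with the drift $-\tfrac12\sum_i|Q^{\frac12}e_i|^2|g(|u^{\epsilon}|^2)|^2u^{\epsilon}$ in the multiplicative case. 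This is correct and is essentially the same route the authors rely on.
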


\iffalse
K(u^{\epsilon}(t))&=K(u^{\epsilon}_0)
+\int_0^{t}\<\nabla u^{\epsilon}(s),\bi 2\lambda f_{\epsilon}'(|u^{\epsilon}(s)|^2)Re(\bar u^{\epsilon}(s)\nabla u^{\epsilon}(s)) u^{\epsilon}(s)\>ds\\
&\quad 
+\frac 1 2\int_0^{t}\sum_{i\in \mathbb N^+}\|\nabla Q^{\frac 12}e_i\|^2ds
+\int_0^{t}\<\nabla u^{\epsilon}(s),\nabla dW(s)\>.
\\
K(u^{\epsilon}(t))
&=K(u^{\epsilon}_0)+
\int_0^{t}\<\nabla u^{\epsilon}(s),\bi 2\lambda f_{\epsilon}'(|u^{\epsilon}(s)|^2)Re(\bar u^{\epsilon}(s)\nabla u^{\epsilon}(s)) u^{\epsilon}(s)\>ds\\
&\quad 
+\frac 12\int_0^{t} \sum_{i\in \mathbb N^+}\<\nabla(g(|u^{\epsilon}(s)|^2)u^{\epsilon} Q^{\frac 12}e_i),\nabla(g(|u^{\epsilon}(s)|^2)u^{\epsilon}(s) Q^{\frac 12}e_i)\>ds\\
&\quad 
+ \int_0^{t}\<\nabla u,\mathbf i \nabla g(|u^{\epsilon}(s)|^2)u^{\epsilon}(s) dW(s)\>\\
&\quad-\frac 12 \int_0^{t} \<\nabla u(s),\nabla \sum_{i\in \N^+}|Q^{\frac 12}e_i|^2(g(|u^{\epsilon}(s)|^2))^2u^{\epsilon}(s)\>ds\\
&\quad-\int_0^{t}\<\nabla u^{\epsilon},\nabla \sum_{i\in \N^+}|Im(Q^{\frac 12}e_i)|^2g'(|u^{\epsilon}|^2)(g(|u^{\epsilon}(s)|^2))^2 |u^{\epsilon}(s)|^2u^{\epsilon}(s)\>ds.
\fi

\begin{proof}[Sketch proof of Lemma \ref{lm-con}]
	The H\"older regularity estimate is a consequence of \cite[Corollary 3.2]{C2020}.
	The proof of the strong convergence is similar to that of \cite[Theorem 1.1]{C2020}. By using Proposition \ref{prop-evo} and assumptions on $g$ and $f_{\epsilon},$ it is not hard to establish the following a priori estimate
	$$\E[\sup_{t\in [0,T]}\|u^{\epsilon}(t)\|_{\mathbb H^1}^p]+\E[\sup_{t\in [0,T]}\|u^{\epsilon}(t)\|_{L^2_{\alpha}}^p]\le C(Q,T,\lambda, p,u_0,\alpha).$$
	Denote $f_{0}(x)=\log(x).$ Then by applying It\^o formula to $\|u^{\epsilon}-u\|^2$, using integration by parts and the properties in Assumption \ref{main-as} and \ref{main-reg-fun}, we obtain the following error estimates.
	In the case of $\widetilde g=1$,  for $\eta'(d-2)\le 2,$ it holds that
	\begin{align*}
	&\quad\|u(t)-u^{\epsilon_n}(t)\|^2\\\nonumber
	&=\int_0^t 2\<u-u^{\epsilon_n},\bi \lambda f_{0}(|u|^2)u-f_{\epsilon_n}(|u^{\epsilon_n}|^2)u^{\epsilon_n}\>ds\\\nonumber
	&\le \int_0^t  4|\lambda| \|u(s)-u^{\epsilon_n}(s)\|^2 ds
	+4|\lambda| \int_0^t |Im \<u(s)-u^{\epsilon_n}(s),(f_{0}(|u|^2)-f_{\epsilon_n}(|u|^2)u\>| ds\\\nonumber
	&\le \int_0^t  6|\lambda| \|u^{\epsilon_m}(s)-u^{\epsilon_n}(s)\|^2 ds+4|\lambda| \int_0^t \|u^{\epsilon_m}(s)-u^{\epsilon_n}(s)\|_{L^1}
	\Big\|\frac {(\epsilon_m-\epsilon_n)|u^{\epsilon_n}|}{\epsilon_m+|u^{\epsilon_n}|^2}\Big\|_{L^{\infty}} ds\\\nonumber
	&\quad+2|\lambda|\int_0^t \|\log(1+\frac {(\epsilon_n-\epsilon_m)|u^{\epsilon_n}|^2}{1+\epsilon_m|u^{\epsilon_n}|^2}) |u^{\epsilon_n}|\|^2ds\\\nonumber 
	&\le \int_0^t  4|\lambda| \|u^{\epsilon_m}(s)-u^{\epsilon_n}(s)\|^2 ds
	+4|\lambda|\epsilon_n^{\frac 12} \int_0^t \|u^{\epsilon_m}(s)-u^{\epsilon_n}(s)\|_{L^1}ds\\\nonumber
	&\quad +2|\lambda| C\epsilon_n^{\eta'}\int_0^t \|u^{\epsilon_n}\|_{L^{2+2\eta'}}^{2+2\eta'}ds.
	\end{align*}
	In the case of $\widetilde g(x)=\mathbf i g(|x|^2)x, $ for $\eta'(d-2)\le 2,$
	\begin{align*}
	&\|u^{\epsilon_m}(t)-u^{\epsilon_n}(t)\|^2\\\nonumber
	&\le \int_0^t  (4|\lambda|+C(g,Q)) \|u^{\epsilon_m}(s)-u^{\epsilon_n}(s)\|^2 ds
	+4|\lambda|\epsilon_n^{\frac 12} \int_0^t \|u^{\epsilon_m}(s)-u^{\epsilon_n}(s)\|_{L^1}ds\\\nonumber
	&+2|\lambda| C\epsilon_n^{\eta'}\int_0^t \|u^{\epsilon_n}\|_{L_{2+2\eta'}}^{2+2\eta'}ds+\int_0^t\<u^{\epsilon_m}-u^{\epsilon_n}, \mathbf i \Big(g(|u^{\epsilon_m}|^2)u^{\epsilon_m}-g(|u^{\epsilon_n}|^2)u^{\epsilon_n}\Big)dW(s)\>.
	\end{align*}
	Combining the above error estimates with the a priori estimates of $u^{\epsilon}$ and following the steps in the proof of \cite[Theorem 1.1]{C2020}, we obtain the desired convergence rate.
\end{proof}

\bibliographystyle{plain}
\bibliography{bib}

\end{document}